\def\N {{\mathbb{N}}}
\def\Q {{\mathbb{Q}}}
\def\Z {{\mathbb{Z}}}
\def\R {{\mathbb{R}}}
\def\F {{\mathcal{F}}}
\def\I {{\mathcal{I}}}
\def\J {{\mathcal{J}}}
\def\C {{\mathbb{C}}}
\def\cdots {{\cdot\cdot\cdot}}
\newtheorem{theorem}{Theorem}[section]
\newtheorem{lemma}[theorem]{Lemma}
\newtheorem{proposition}[theorem]{Proposition}
\newtheorem{corollary}[theorem]{Corollary}
\newtheorem{definition}[theorem]{Definition}
\theoremstyle{definition}
\newtheorem{example}[theorem]{Example}
\newtheorem{remark}[theorem]{Remark}
\numberwithin{equation}{section}
\begin{document}

\title{Cohomology rings of good contact toric manifolds}

\author{Shisen Luo}

\address{Department of Mathematics, Cornell University,
Ithaca, NY 14853-4201, USA}

\email{{\tt ssluo@math.cornell.edu}}

\subjclass[2000]{Primary: 53D10 Secondary: 53D20, 55N91}
\keywords{contact, toric, cohomology, equivariant cohomology.}

\begin{abstract}
A good contact toric manifold $M$ is determined by its moment cone
$C$. We compute the equivariant cohomology ring with $\Z$
coefficient of $M$ in terms of the combinatorial data of $C$. Then
under a smoothness criterion on the cone $C$, we compute the
singular cohomology ring with $\Z$ coefficient of $M$ in terms of
the combinatorial data of $C$.
\end{abstract}

\date{\today}

\maketitle \tableofcontents
\section{\bf Introduction}\label{sec:introduction}

\subsection{\it Good contact toric manifolds}

 A {\em contact toric manifold} of dimension
$2n-1$ is a compact connected $(2n-1)$-dimensional contact manifold
equipped with an effective Hamiltonian action of an $n$-torus $T^n$.
One can find an introduction to Hamiltonian actions on contact
mainfolds and definitions of related notions in \cite{L:contact
toric}. We will not use these concepts in any essential way in this
paper. Motivated by the work of Banyaga and Molino
\cite{BnM1},\cite{BnM2},\cite{Banyaga}, and of Boyer and Galicki
\cite{BoyG}, Lerman gave a full-classification theorem of contact
toric manifolds in \cite{L:contact toric}. According to Lerman's
theorem, when $n\geq 3$ and the torus action is not free, the
contact toric manifolds are classified by their {\it moment cones},
which by definition are the union of $\{origin\}$ and the moment map
image of their symplectizations. These moment cones are all {\it
good cones}.
\begin{definition}\label{def:good cone}
 A {\bf good cone} in $\R^n$ is a rational polyhedral cone given by
\begin{equation}
C= \bigcap_{i=1}^{m}\{x\in \R^n: \langle x, v_{i}\rangle \geq 0\},
\end{equation}
where $F_{i}=\{x\in \R^n: \langle x,v_{i} \rangle=0\}$ is a facet of
$C$ and $v_{i}\in \Z^n$ is the inward-pointing primitive normal
vector to $F_{i}$. In addition this cone must satisfy:

(i)For $0<l<n$, each codimension $l$ face $F$ of $C$ is contained in
exactly $l$ facets:
\begin{equation*}
F=F_{i_{1}}\cap F_{i_{2}}\cap\cdots\cap F_{i_{l}};
\end{equation*}
and

(ii) The $\Z$-module generated by $v_{i_{1}},...,v_{i_{l}}$ is a
direct summand of $\Z^n$ of rank $l$.
\end{definition}

\begin{definition}\label{def:good contact toric manifolds}
A contact toric manifold $M$ is called a {\bf good contact toric
manifold} if $dim M > 3$ and the moment cone of $M$ is a strictly
convex good cone.
\end{definition}

This explains the title of this paper.

\begin{remark}
The requirement that the moment cone being strictly convex is
equivalent to asking $M$ to be of {\em Reeb type}.
\end{remark}

\begin{remark}
When the moment cone is not strictly convex, the construction of
contact toric manifold with the given moment cone can be found in
the proof of Proposition 4.7 in \cite{LS:integrable torus actions}.
They are topologically $T^k\times S^{k+2l-1}$.
\end{remark}

\subsection {\it Equivariant cohomology}

Assume $G$ is a Lie group and $M$ is a topological space with a
$G$-action. Let $EG$ be a contractible topological space with a free
$G$-action. Then $G$ acts freely on $EG\times M$ diagonally. The
quotient space of this action, which we will denote by $(EG\times
M)/G$ and $EG\times_{G}M$ interchangeably, is called the {\em Borel
construction} of the $G$-space $M$. The homotopy type of
$EG\times_{G}M$ is independent of the choice of $EG$. Notations such
as $M_{G}$ and $EG\times^{G}M$ are also used in literature to denote
the Borel construction of $M$. The {\em equivariant cohomology} of
$M$ is defined as the ordinary cohomology of $EG\times_{G}M$ and is
denoted $H_{G}^{*}(M)$.

The equivariant cohomology of a point with trivial torus action is
of particular interest to us in this paper. The Borel construction
of a point $ET^m\times_{T^m}pt =ET^m/T^m$ is the classifying space
of $T^m$, denoted by $BT^m$. If we use $(S^{\infty})^{m}$ as the
model for $ET^m$, then $BT^m$ is the product of $m$ copies of $\C
P^{\infty}$. The equivariant cohomology of a point is thus a
polynomial ring with $m$ variables, i.e,
\begin{equation}
H_{T^m}^{*}(pt;\Z) = \Z[x_{1},x_{2},...,x_{m}],
\end{equation}
where $x_{i}\in H^{2}_{T^m}(pt;\Z)$.

 The variable $x_{i}$ is the first Chern class of the fiber bundle
\begin{equation}
ET^m\times_{T^m}\C\rightarrow ET^m\times_{T^m}pt,
\end{equation}
where in the total space, $T^m$ acts on $\C$ with weight
$-(0,0,...,1,...,0)$, where the $1$ is in the $i^{th}$ position.

The generators for the various cohomology rings in this paper will
always be the images of these $x_{i}$'s under certain maps.

It is not hard to see that equivariant cohomology is an equivariant
homeomorphism invariant. In this paper, we are concerned with
equivariant and ordinary cohomology rings. Since both of these are
equivariant homeomorphism invariants, two $T^n$-spaces that are
equivariantly homeomorphic are considered as identical spaces.
Because of this, we may exploit an idea of Davis and Januskiwicz in
\cite{DJ}, where they define a topological counterpart for toric
manifolds and compute the corresponding cohomology rings.

\subsection{\it Outline of the paper}

In Section~\ref{sec:basic con}, we imitate the construction given in
\cite{DJ} to define several topological spaces with torus actions,
using combinatorial methods. These spaces will be shown in later
sections to be equivariantly homeomorphic to toric symplectic cones,
good contact toric manifolds and symplectic toric orbifolds
respectively.

In Section~\ref{sec:equi contact}, exploiting the techniques in
\cite{DJ} with some modification, we compute the equivariant
cohomology of a good contact toric manifold. The main theorem is
Theorem~\ref{theorem:equivariant cohomology contact}.

In Section~\ref{sec:symplectic orbifold}, we collect some facts
about symplectic toric manifolds and setup some notation for
Section~\ref{sec:cohomology contact}.

In Section~\ref{sec:cohomology contact}, we compute the ordinary
cohomology of a good contact toric manifold. The work in this
section is not carried out in full generality. We must first assume
$C_{0}$, the cone minus the origin, is contained in the upper half
space
\begin{equation}
U\R^n = \{(x_{1},...,x_{n})\in \R^n : x_{n}>0\}.
\end{equation}
We will show that this assumption will not cause any loss of
generality.

The intersection of $C$ with the hyperplane
\begin{equation}
H = \{(x_{1},...,x_{n})\in \R^n : x_{n}=1\}
\end{equation}
is a simple rational convex polytope. We denote it by $P$. Assume
$v_{i}=(v_{i1},v_{i2},...,v_{in})$ is the primitive inward-pointing
normal vector to  $F_{i}$, the $i^{th}$ facet of the cone. We need
to impose the following criterion on $C$.

{\bf Smoothness Criterion:} The polytope $P$ is a Delzant polytope
in $H$, and the vector $(v_{i1},v_{i2},...,v_{i,n-1})$ is primitive
in $\Z^{n-1}$.

  This hypothesis is where the
generality is lost. We call this a {\em smoothness criterion}, since
a Delzant polytope is by definition a smooth simple rational convex
polytope. We make this hypothesis so we do not need to deal with
orbi-bundles over orbifolds.

Under this smoothness criterion, we show that $M$, a good contact
toric manifold, is a principle $S^1$-bundle over a symplectic toric
manifold $N$. Then using the Gysin sequence of the $S^1$-bundle, we
compute the ordinary cohomology group of $M$ and also show how to
take the product of any two even degree cohomology classes, and the
product of one even degree cohomology class and one odd degree
cohomology class. This is Theorem ~\ref{thm:contact cohomology}.
Then by relating the Euler class of the $S^1$-bundle to the
symplectic form on $N$ and using the Hard Lefschetz Theorem, we show
that the odd degree cohomology of $M$ vanishes in degree lower than
half, and hence show the product of any two odd degree cohomology
classes of $M$ is zero for dimension reasons. This is Theorem
~\ref{thm:vanishing odd}.

Finally in the Appendix, we give several equivalent descriptions of
the generators of the cohomology ring of a symplectic toric
manifold.
\subsection{\it Relation with other work}

Some of the geometric computations we describe in this paper have
been computed in a more algebraic fashion by other authors. The
virtue of our computation is that it is more explicit and geometric.
Moreover, the geometry allows us to use the Hard Lefschetz Theorem
on symplectic toric manifold to deduce Theorem 5.15. The consequent
vanishing of certain Betti numbers is much more obscure in the
existing algebraic description of the cohomology ring. The integral
equivariant cohomology ring of a general smooth toric variety, which
includes the symplectization of good contact toric manifolds, was
identified with the Stanley-Reisner ring earlier by Franz
\cite[Sec.\ 3]{Franz} with a different proof from our proof of
Theorem~\ref{theorem:equivariant cohomology contact}. In Theorem 1.2
of the same paper of Franz, he got an expression of the integral
ordinary cohomology of a smooth toric variety in terms of Tor
modules. The ordinary cohomology ring of the quotient of a
moment-angle complex, which includes the case we consider in Section
5, was expressed also using Tor modules in \cite[Thm.7.37]{BP}. To
relate the description in Theorem ~\ref{thm:contact cohomology} in
this paper to the results of [Fr] and \cite{BP} uses an algebraic
argument that will be explained in [LMM].

 \

{\bf A remark on notations}: If a group $G$ acts on two spaces $X$
and $Y$, we use $X\times_{G} Y$ and $(X\times Y)/G$ interchangeably
to denote the quotient space of $X\times Y$ under the diagonal $G$
action. And we will use $[x,y]$ to denote the element of
$X\times_{G}Y$ that is the equivalence class of $(x,y)\in X\times
Y$, so $[gx, gy]=[x,y]$, for $g\in G$.

{\bf Acknowledgement}: I would like to thank Tara Holm, Miguel
Abreu, Chi-Kwong Fok, Allen Hatcher, Allen Knutson, Tomoo Matsumura,
Frank Moore and Reyer Sjamaar for some useful conversations.

\section{\bf Basic definitions,constructions and examples}\label{sec:basic con}

A $2n$ dimensional {\bf symplectic toric manifold} is a compact
connected symplectic manifold equipped with an effective Hamiltonian
action of an $n$-torus $T^n$. Delzant showed in \cite{Delzant} that
these geometric objects are classified by their moment image, which
is a simple rational smooth polytope, called a {\bf Delzant
polytope} in the symplectic literature. More information about
symplectic toric manifolds may be found in Chapter 28 of
\cite{CdS:book}.

The analogous result for contact toric manifolds was given by Lerman
in \cite{L:contact toric}, where he showed that a large class of
contact toric manifolds are classified by {\bf good cones}, defined
in Definition ~\ref{def:good cone}.

 The full
classification theorem of contact toric manifolds is Theorem 2.18 in
\cite{L:contact toric}. The terminology {\bf good contact toric
manifold}, which is defined in Definition~\ref{def:good contact
toric manifolds}, used in this paper belongs to the case(4) in
Lerman's classification theorem. It is actually a proper subcase,
since here we also require the moment cone to be strictly convex.

A {\bf toric symplectic cone} is the symplectization of a contact
toric manifold. Toric symplectic cones and contact toric manifolds
are in one-to-one correspondence. There is more information about
symplectic cones in \cite{L:geodesic flows} and \cite{AM:contact
homology}. We will review the method to obtain toric symplectic
cones and contact toric manifolds from strictly convex good cones in
Section~\ref{sec:equi contact}.

The Delzant polytope, which is the moment image of symplectic toric
manifold, is in fact the orbit space of the $T^n$ action, and the
moment map is just the point to orbit map. Using the ideas of
\cite{DJ}, with simple combinatorial methods we can construct
manifolds that are $T^n$-equivariantly homeomorphic to symplectic
toric manifolds. The constructions we describe below are a variation
and generalization of those in \cite{DJ}.

We let $P$ (or $C_{0}$) be a simple convex polytope (or a strictly
convex good cone minus the origin) in $\R^n$. The set of facets of
$P$ (or $C_{0}$) is denoted $\F$, and we write $F_{i}$ for the
$i^{th}$ facet. A {\bf characteristic map} is a map
\begin{equation*}
\lambda: \F\rightarrow \Z^l
 \end{equation*}
 where $\Z^l$ is the integral lattice in $\R^l$. Here $l$
and $n$ are not necessarily equal. Denote $\lambda(F_{i})$ by
$\lambda_{i}$.

Every finite subset $U$ of $\Z^l$ determines a subgroup of $T^l$
generated by
\begin{equation*}\{(e^{u_{1}\theta},e^{u_{2}\theta},...,e^{u_{l}\theta}):\theta\in
\R,(u_{1},...,u_{l})\in U\}\end{equation*} It is in fact a closed
subgroup. For every point $p$ in $P$(or $C_{0}$), denote by
$S^{\lambda}_{p}$ the subgroup of $T^l$ determined by vectors
\begin{equation*}\{\lambda_{i}:p\in F_{i}\}.\end{equation*}
Define an equivalence relation $\Delta$ on $T^{l}\times P$ by:
\begin{equation}
(g,p)\Delta (h,q) \Leftrightarrow  p=q, and\ g^{-1}h\in
S^{\lambda}_{p}.
\end{equation}
 We say $\Delta$ is the equivalence relation
determined by $\lambda$. Let $\Delta_{p}$ denote $S^{\lambda}_{p}$,
and $P^{\lambda}$ denote the quotient space $(T^{l}\times
P)/\Delta$. With the quotient topology and the $T^{l}$ action on the
first coordinate by multiplication, $P^{\lambda}$ is in the category
of $T^{l}$-topological spaces.

We now give three fundamental examples which will be heavily used
throughout the paper.

\begin{example}\label{ex:symplectic cone combina}
  Let
  \begin{equation}
  C= \bigcap_{i=1}^{m}\{x\in \R^n: \langle x, v_{i}\rangle \geq 0\}
\end{equation}
be a strictly convex good cone in $\R^n$. The vector $v_{i}$ is the
inward-pointing primitive normal vector to $F_{i}$. We assume the
equations are {\bf minimal}; that is, removing any one of the
equations will give a different set. We will always assume that the
equations used to define convex polytopes or cones are minimal. We
let $C_{0}=C\backslash
 \{origin\}$. We set $l=n$ and define a characteristic map $\lambda$ by $\lambda(F_{i})=v_{i}$.
 We denote by $\Delta$ the equivalence relation on $T^n\times C_{0}$ determined
 by $\lambda$.
 Then $C_{0}^{\lambda}= (T^n\times C_{0})/\Delta$, with $T^n$ acting on the first coordinate by multiplication, is
$T^n$-equivariantly homeomorphic to a toric symplectic cone. This
claim will be proved in Section~\ref{sec:equi contact}.
\end{example}

\begin{example}\label{ex:contact combina}
 For $C$ as in the previous example, it is a cone over a simple convex polytope $P$.
Notice that the facets of $P$ are in one-to-one correspondence with
those of $C_{0}$. We let $\tilde{F_{i}}$ denote the facet of $P$
contained in $F_{i}$. We may define a characteristic map as in
Example~\ref{ex:symplectic cone combina} by sending $\tilde{F_{i}}$
to $v_{i}$. By abuse of notation,we still call this map $\lambda$
and denote by $\Delta$ the equivalence relation on $(T^n\times P)$
determined by $\lambda$. We emphasize that $\lambda_{i}=
\lambda(\tilde{F_{i}})$ is the normal vector to $F_{i}$, not to
$\tilde{F_{i}}$ in $P$.  Then $P^{\lambda}= (T^n\times P)/\Delta$,
with $T^n$ acting on the first coordinate by multiplication, is
$T^n$-equivariantly homeomorphic to the contact toric manifold $M$
associated to $C$. The space $C_{0}^{\lambda}$ in the previous
example is the symplectization of $M=P^{\lambda}$. These claims will
be proved in Section~\ref{sec:equi contact}.
\end{example}

The last example is related to the polytope $P$ itself, without
reference to the cone $C$.
\begin{example}\label{ex:sym orb combinatorics}
Let $P$ be a Delzant polytope, i.e. a convex simple rational smooth
polytope, in $\R^{k}$ given by
\begin{equation}\label{eq:simple polytope}
P = \bigcap_{i=1}^{m}\{x\in \R^k: \langle x, \tilde{v_{i}}\rangle
\geq \eta_{i}\}.\end{equation} The equations are again assumed to be
minimal, as in Example~\ref{ex:symplectic cone combina}. We denote
the $i^{th}$ facet by $\tilde{F_{i}}$. Then $\tilde{v_{i}}$ is the
inward-pointing primitive normal vector to the facet
$\tilde{F_{i}}$. We set $l=k$ and define a characteristic map
$\tilde{\lambda}$ by setting $\tilde{\lambda}(\tilde{F_{i}})=
\tilde{v_{i}}$. Denote by $\tilde{\Delta}$ the equivalence relation
on $T^k\times P$ determined by $\tilde{\lambda}$. Then
$P^{\tilde{\lambda}}=(T^k\times P)/\tilde{\Delta}$, with $T^k$
acting on the first coordinate by multiplication, is
$T^{k}$-equivariantly homeomorphic to the symplectic toric manifold
associated to $P$. This will be restated in
Section~\ref{sec:symplectic orbifold}.
\end{example}
Notice the subtle difference between $P^{\lambda}$ and
$P^{\tilde{\lambda}}$. Their relation is crucial in this paper.

\section{\bf Equivariant cohomology of good contact toric manifolds}\label{sec:equi contact}
In this section, we imitate the computation of the equivariant
cohomology of a symplectic toric manifold given in \cite{DJ} to
compute the equivariant cohomology of a good contact toric manifold.

\begin{remark}
In their paper \cite{DJ}, Davis and Januszkiewicz defined a class of
manifolds equipped with torus action which they called {\em toric
manifolds}, now called {\em quasitoric manifolds}, and computed
their equivariant and ordinary cohomology rings. The cohomology ring
of symplectic toric manifolds is a particular example of the
cohomology ring of a quasitoric manifold. Quasitoric manifolds are a
strictly larger class than symplectic toric manifolds. More
information may be found in \cite{GP:quasitoric}. Constructions in
\cite{DJ} were later generalized to orbifolds in \cite{PS:quasitoric
orbifolds}.
\end{remark}

We begin with a brief review of the construction of a toric
symplectic cone from a strictly convex good cone given in Lemma 6.4
in \cite{L:contact toric}. Let the cone be as in
Example~\ref{ex:symplectic cone combina}:
 \begin{equation}
  C= \bigcap_{i=1}^{m}\{x\in \R^n: \langle x, v_{i}\rangle \geq 0\}
\end{equation}
Define a map
\begin{equation}\label{eq:pi_Z contact}
\pi_{\Z}: \Z^m\rightarrow \Z^n
\end{equation}
by sending the $i^{th}$ standard basis vector $e_{i}$ of $\Z^m$ to
$v_{i}$. This induces a map
\begin{equation}
\pi_{\R}: \R^m\rightarrow \R^n
\end{equation}
by tensoring with $\R$. We then get a map
\begin{equation}\label{eq:pi_T contact}
\pi_{T}: T^m=\R^m/\Z^m \rightarrow T^k=\R^n/\Z^n
\end{equation}
The subscripts $\Z,\R,T$ may be omitted when it will not cause
confusion.

Let $K=ker (\pi_{T})$. Then we have a short exact sequence of groups
\begin{equation}\label{eq:KT^mT^n}
\xymatrix{
  1 \ar[r]^{} & K \ar[r]^{i} & T^m \ar[r]^{\pi_{T}} & T^n \ar[r]^{} & 1 .   }
\end{equation}
This induces maps between Lie algebras
\begin{equation}
\xymatrix{
  0 \ar[r]^{} & \mathfrak{k} \ar[r]^{i_{*}} & \mathfrak{t}^m \ar[r]^{\pi_{*}} & \mathfrak{t}^n \ar[r]^{} & 0 ,  }
\end{equation}
with dual maps between the duals of the Lie algebras
\begin{equation}\xymatrix{0 & \mathfrak{k}^*\ar[l]^{} &
(\mathfrak{t}^m)^* \ar[l]_{i^*}& (\mathfrak{t}^n)^*\ar[l]_{\pi^*}& 0
.\ar[l]}
\end{equation}

Let $u:\C^m\rightarrow (\mathfrak{t}^m)^*$ be defined by
\begin{equation}\label{eq:std moment contact}
u(z_{1},...,z_{m})=(|z_{1}|^2,...,|z_{m}|^2).
\end{equation}
Lerman showed in \cite{L:contact toric} that
\begin{equation} S=((i^{*}\circ u)^{-1}(0)\backslash \{0\})/K
\end{equation} is the toric symplectic cone associated to
$C$.

The standard action of $T^m$ on $\C^m$ restricts to a $T^m$-action
on $(i^{*}\circ u)^{-1}(0)\backslash \{0\}$, and hence a $K$-action
on $(i^{*}\circ u)^{-1}(0)\backslash \{0\}$. It induces a $T^n\cong
T^m/K$ action on $S=((i^{*}\circ u)^{-1}(0)\backslash \{0\})/K$.

In other words, the action of $t_{n}\in T^n$ on $S$ is induced by
the standard action of any element in $\pi^{-1}(t_{n})$ on
$(i^{*}\circ u)^{-1}(0)\backslash \{0\}$. This action is Hamiltonian
with moment map
\begin{eqnarray}\label{eq:moment toric }
\nu: & S& \rightarrow  (\mathfrak{t}^n)^{*}\\
&[z_{1},...,z_{m}]& \mapsto (\pi^{*})^{-1}(u(z_{1},...,z_{m})),
\end{eqnarray}
using that $\pi^{*}$ is injective. The image of $\nu$ is
$C_{0}=C\backslash\{0\}$ according to \cite{L:contact toric}. It is
not hard to see from the construction that $C_{0}$ is the orbit
space of the $T^n$ action on $S$.

Now we are ready to prove the claim made in
Example~\ref{ex:symplectic cone combina} as follows.
\begin{proposition}
With the same notation as in Example~\ref{ex:symplectic cone
combina}, $C_{0}^{\lambda}$ is $T^n$-equivariantly homeomorphic to
$S$, the symplectic toric cone associated with $C$.
\end{proposition}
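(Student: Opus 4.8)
The plan is to construct an explicit $T^n$-equivariant homeomorphism $\Phi: C_0^\lambda \to S$ and check it is well-defined, continuous, bijective, equivariant, and has continuous inverse. The natural candidate arises from the standard moment map picture. First I would define a map $T^n \times C_0 \to S$ as follows: given $p \in C_0$, lift it via $\nu$ to a point in $\nu^{-1}(p) \subset S$, which by the description of $\nu$ amounts to choosing $(z_1,\dots,z_m)$ with $(|z_1|^2,\dots,|z_m|^2) = \pi^*(p)$ (well-defined up to the $T^m$-action, hence up to the $K$-action on $S$ a canonical point, and the residual $T^n \cong T^m/K$ ambiguity); then act by $t_n \in T^n$ to move around the fiber. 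Concretely, send $(t_n, p)$ to $t_n \cdot \bigl[\sqrt{(\pi^*p)_1}, \dots, \sqrt{(\pi^*p)_m}\bigr]$, where the bracket denotes the $K$-orbit and $t_n$ acts through any lift in $\pi^{-1}(t_n)$. This descends to $\Phi: C_0^\lambda \to S$.

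The key steps, in order, are: (1) verify the map $T^n \times C_0 \to S$ is well-defined, i.e. that $\pi^*(p)$ does have all nonnegative coordinates for $p \in C_0$ (this is exactly the statement $\langle p, v_i\rangle \geq 0$, since $\pi^*$ is dual to $\pi_\Z(e_i) = v_i$, so $(\pi^* p)_i = \langle p, v_i \rangle$), and that the $T^n$-action is unambiguous modulo $K$; (2) show it is constant on $\Delta$-equivalence classes — this is the heart of the matter: if $(g,p) \Delta (h,p)$ with $g^{-1}h \in S_p^\lambda$, I must show $g$ and $h$ produce the same point of $S$, which reduces to checking that the subgroup $S_p^\lambda \subset T^n$ generated by $\{v_i : p \in F_i\}$ acts trivially on the chosen lift; the coordinates of the lift corresponding to $i$ with $p \in F_i$ are exactly the $z_i$ with $|z_i|^2 = \langle p, v_i\rangle = 0$, i.e. $z_i = 0$, and the one-parameter subgroup generated by $v_i$ acts on $\C^m$ (through a lift to $T^m$, well-defined mod $K$) only in those vanishing coordinates, hence trivially on $S$; (3) deduce $\Phi$ is continuous from the quotient topology and bijective by producing the inverse, which sends $[z_1,\dots,z_m] \in S$ to $[t_n, \nu([z]))]$ for an appropriate $t_n$ read off from the phases of the $z_i$ modulo $K$; (4) check equivariance, which is immediate from the construction since $T^n$ acts on the first coordinate of $C_0^\lambda$ and by the induced action on $S$; (5) conclude both $C_0^\lambda$ and $S$ are compact Hausdorff (or at least that $\Phi$ is a proper continuous bijection) so that $\Phi$ is a homeomorphism, or alternatively exhibit the continuous inverse directly.

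The main obstacle I expect is step (2): making precise that the isotropy subgroup of the chosen lift in $(i^* \circ u)^{-1}(0)\backslash\{0\}$ under the $T^n$-action matches the subgroup $S_p^\lambda$ prescribed combinatorially by $\lambda$. This requires carefully tracking how the isotropy in $T^m$ (which is the coordinate subtorus indexed by $\{i : z_i = 0\} = \{i : p \in F_i\}$) descends under $\pi_T: T^m \to T^n$ to the subgroup generated by the corresponding $v_i$'s, and invoking the goodness condition (ii) on the cone — that $v_{i_1},\dots,v_{i_l}$ span a direct summand — to ensure $\pi_T$ restricts to an isomorphism from the relevant coordinate subtorus of $T^m$ onto $S_p^\lambda$, with $K$ meeting this subtorus trivially. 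The remaining steps are essentially bookkeeping with the exact sequence \eqref{eq:KT^mT^n} and the explicit formula for $\nu$.
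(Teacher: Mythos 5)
Your proposal is correct and follows essentially the same route as the paper: the explicit map $(t_n,p)\mapsto t_n\cdot[u_0^{-1}(\pi^*(p))]$, well-definedness from $(\pi^*p)_i=\langle p,v_i\rangle\geq 0$, and the identification of the $T^n$-stabilizer of the lift with $\Delta_p$ (the paper's Lemma~\ref{lemma:stabilizer}) to get a bijection, then equivariance and continuity of the inverse. One small caveat: your first option in step (5) does not apply, since $C_0^\lambda$ and $S$ are noncompact cones, but your fallback (exhibiting the continuous inverse, or equivalently the paper's observation that the induced map is open) is exactly what the paper uses.
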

\begin{proof}

If we restrict the domain of $u$ defined in \eqref{eq:std moment
contact} to
\begin{equation}(\R^m)^{+}=\{(z_{1},...,z_{m})\in \C^m: z_{i}\in \R_{\geq
0},\forall i\},\end{equation} it is a homeomorphism onto its image.
Call this restriction map $u_{0}$.

 Now define a map \begin{equation}\gamma_{0}: T^n\times C_{0}\rightarrow
 S\end{equation} by:
 \[(t_{n},p)\mapsto t_{n}.[u_{0}^{-1}(\pi^{*}(p))]\]

We note that $\pi^{*}(p)\in (\R^m)^{+}$ because of the defining
equations for $C$. Thus, $u_{0}^{-1}(\pi^{*}(p))$ is well-defined,
and it is straightforward to check that
\begin{equation*}u_{0}^{-1}(\pi^{*}(p))\in (i^{*}\circ u)^{-1}(0)\backslash
\{0\}.\end{equation*} We let $[u_{0}^{-1}(\pi^{*}(p))]\in S$ be the
equivalence class of $u_{0}^{-1}(\pi^{*}(p))$. Finally, $t_{n}\in
T^n$ acts on $S$ as noted earlier.

 This map $\gamma_{0}$ is a continuous. It is also surjective because $C_{0}$ is the orbit space of the $T^n$-action on
 $S$.

 According to Lemma~\ref{lemma:stabilizer}, which will be stated and proved right after this proposition, $\gamma_{0}$ induces a bijective map
 \begin{equation}\label{eq:gamma}
 \gamma: (T^n\times C_{0})/\Delta\rightarrow
 S\end{equation}
 The inverse of this map is also continuous since $\gamma$ is an
 open map. Finally, $\gamma$ is obviously $T^n$-equivariant.
\end{proof}

\begin{lemma}\label{lemma:stabilizer}
 The stabilizer of $[u_{0}^{-1}(\pi^{*}(p))]\in S$ is
 $\Delta_{p}\leq T^n$.
 \end{lemma}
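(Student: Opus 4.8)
The plan is to compute the stabilizer of the point $[u_{0}^{-1}(\pi^{*}(p))]\in S$ directly from the definitions. Recall $S=((i^{*}\circ u)^{-1}(0)\backslash\{0\})/K$ and the $T^n$-action is induced from the standard $T^m$-action on $\C^m$ via any lift through $\pi_{T}\colon T^m\to T^n$. So an element $t_{n}\in T^n$ fixes $[u_{0}^{-1}(\pi^{*}(p))]$ if and only if, choosing any lift $t_{m}\in\pi_{T}^{-1}(t_{n})$, the points $t_{m}\cdot u_{0}^{-1}(\pi^{*}(p))$ and $u_{0}^{-1}(\pi^{*}(p))$ lie in the same $K$-orbit. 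Write $z=u_{0}^{-1}(\pi^{*}(p))$; this is a point of $(\R^m)^{+}$, i.e. a vector with nonnegative real coordinates, and the $i^{th}$ coordinate $z_{i}$ is zero precisely when the corresponding slot of $\pi^{*}(p)$ is zero, which by the defining inequalities $\langle p,v_{i}\rangle$ for $C$ happens exactly when $p\in F_{i}$.

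First I would identify the $T^m$-stabilizer of $z$ inside $\C^m$. The standard action of $(s_{1},\dots,s_{m})\in T^m$ on $z\in\C^m$ multiplies $z_{j}$ by $s_{j}$, so it fixes $z$ iff $s_{j}=1$ for every $j$ with $z_{j}\neq 0$, i.e. for every $j$ with $p\notin F_{j}$. Thus $\mathrm{Stab}_{T^m}(z)$ is the subtorus $T_{p}\leq T^m$ consisting of tuples supported on the index set $\{j : p\in F_{j}\}$. Next, since $K$ acts freely on $(i^{*}\circ u)^{-1}(0)\backslash\{0\}$ (this is why $S$ is a manifold — the $K$-action is free there), for a lift $t_{m}$ the condition ``$t_{m}\cdot z$ and $z$ are in the same $K$-orbit'' means there is a unique $\kappa\in K$ with $\kappa\cdot(t_{m}\cdot z)=z$, equivalently $\kappa t_{m}\in\mathrm{Stab}_{T^m}(z)=T_{p}$. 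Hence $t_{n}$ stabilizes $[z]$ iff $t_{m}\in K\cdot T_{p}$, i.e. iff $t_{n}=\pi_{T}(t_{m})$ lies in $\pi_{T}(K\cdot T_{p})=\pi_{T}(T_{p})$ (using $\pi_{T}(K)=1$). So the stabilizer of $[z]$ in $T^n$ is exactly $\pi_{T}(T_{p})$.

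It remains to identify $\pi_{T}(T_{p})$ with $\Delta_{p}=S^{\lambda}_{p}\leq T^n$. By definition $S^{\lambda}_{p}$ is the closed subgroup of $T^n=\R^n/\Z^n$ generated by the one-parameter subgroups $\theta\mapsto\exp(\theta v_{i})$ for those $i$ with $p\in F_{i}$, where $\lambda_{i}=v_{i}$. On the other hand $T_{p}$ is generated by the one-parameter subgroups $\theta\mapsto\exp(\theta e_{i})$ in $T^m$ for the same set of indices, and $\pi_{T}$ was defined precisely so that $\pi_{\Z}(e_{i})=v_{i}$, hence $\pi_{T}(\exp(\theta e_{i}))=\exp(\theta v_{i})$. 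Since $\pi_{T}$ is a continuous homomorphism it carries the closed subgroup generated by $\{\exp(\theta e_{i}):p\in F_{i}\}$ onto the closed subgroup generated by $\{\exp(\theta v_{i}):p\in F_{i}\}$, giving $\pi_{T}(T_{p})=S^{\lambda}_{p}=\Delta_{p}$, which is what we want.

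The main obstacle I anticipate is the passage ``$t_{m}\cdot z\sim_{K} z$ iff $t_{m}\in K\cdot T_{p}$'': this is clean only because the $K$-action on $(i^{*}\circ u)^{-1}(0)\backslash\{0\}$ is free, so I would want to state that freeness explicitly (it follows from the goodness of $C$, which guarantees the relevant sublattices are direct summands and hence the isotropy subgroups of $K$ are trivial), and also check the small compatibility point that $z=u_{0}^{-1}(\pi^{*}(p))$ genuinely lies in $(i^{*}\circ u)^{-1}(0)\backslash\{0\}$ — but that is already asserted in the proof of the proposition above. A secondary care point is that $T_{p}$, the support subtorus, is closed in $T^m$ (it is a coordinate subtorus, so this is immediate) and that its image under the continuous homomorphism $\pi_{T}$ is closed in the compact group $T^n$, so no closure-versus-image subtlety arises.
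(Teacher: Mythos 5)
Your proof is correct and follows essentially the same route as the paper: identify which coordinates of $u_{0}^{-1}(\pi^{*}(p))$ vanish (those indexed by facets through $p$), note that the $T^m$-stabilizer is the corresponding coordinate subtorus, and push it through $\pi_{T}$ to get $\Delta_{p}$; the paper simply states this more tersely. Your appeal to freeness of the $K$-action is harmless but unnecessary — existence (not uniqueness) of $\kappa\in K$ with $\kappa t_{m}\in T_{p}$ is all the argument uses.
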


\begin{proof}
For simplicity and without loss of generality, we may assume that
the facets that contain $p$ are exactly $F_{1},...,F_{j}$. Thus, the
coordinates of $\pi^{*}(p)$ that are zero are exactly the first $j$
coordinates. Consequently, the stabilizer of
$[u_{0}^{-1}(\pi^{*}(p))]$ must be
$$
\{
\pi_{T}(e^{i\theta_{1}},e^{i\theta_{2}},...,e^{i\theta_{j}},1,1,...,1):
\theta_{1},...,\theta_{j}\in \R\}.$$ This is precisely $\Delta_{p}$.
\end{proof}

A few more lines will prove the claims made in
Example~\ref{ex:contact combina}.

\begin{proposition}\label{prop:contact combina}
Let $M$ denote the good contact toric manifold associated to $C$.
With the same notation as in Example~\ref{ex:contact combina},
$P^{\lambda}$ is $T^n$-equivariantly homeomorphic to $M$.
\end{proposition}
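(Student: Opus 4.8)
The plan is to mimic the structure of the preceding proof for the symplectic cone, reducing the contact case to the cone case by exploiting the fact that $M$ sits inside its symplectization $S=C_0^\lambda$ as the level set of a suitable norm. Recall from Lerman's construction that $M$ is obtained from $S$ by choosing a contact hyperplane; concretely, $M$ is the quotient by $K$ of $\{z\in\C^m:(i^*\circ u)(z)=0,\ |z_1|^2+\cdots+|z_m|^2=1\}$. Equivalently, writing $\mu(z)=|z_1|^2+\cdots+|z_m|^2=\langle u(z),(1,\dots,1)\rangle$, the manifold $M=\nu^{-1}(P')/\!\sim$ where $P'$ is the slice of $C_0$ cut out by the affine equation corresponding to $\mu=1$. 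The point is that the orbit space of the $T^n$-action on $M$ is exactly this slice, which is (affinely) the polytope $P$ appearing in Example~\ref{ex:contact combina}, and the stabilizer data is inherited verbatim from $S$ since $M\hookrightarrow S$ is $T^n$-equivariant.

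First I would fix the identification of $P$ with a transversal slice of $C_0$: since $C$ is strictly convex, there is a vector $w\in(\mathfrak{t}^n)^*$-side (in fact $w$ can be taken so that $\pi^*$-pulled back it is $(1,\dots,1)$ up to scaling, or more invariantly any vector positive on $C_0\setminus\{0\}$) such that $P=\{p\in C_0:\langle p,w\rangle=1\}$ is compact, and the facets $\tilde F_i$ of $P$ are precisely $F_i\cap\{\langle\cdot,w\rangle=1\}$. This gives the one-to-one correspondence of facets asserted in the Example, and crucially, for $p\in P$ the set $\{i:p\in \tilde F_i\}$ equals $\{i:p\in F_i\}$, so $S^\lambda_p$ (hence $\Delta_p$) is the same whether computed in $P^\lambda$ or in $C_0^\lambda$.

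Next I would define $\gamma_0': T^n\times P\to M$ by $(t_n,p)\mapsto t_n\cdot[u_0^{-1}(\pi^*(p))]$, exactly as in the previous proposition but with target $M$ rather than $S$; one checks $u_0^{-1}(\pi^*(p))$ indeed lands in the norm-one level set because the slice was chosen so that $\langle u(u_0^{-1}(\pi^*(p))),(1,\dots,1)\rangle=\langle\pi^*(p),\dots\rangle=1$ for $p\in P$ (adjusting $w$ so this reads off correctly; if $w$ is not literally $(1,\dots,1)$ under $\pi^*$ one rescales, or replaces the round sphere in Lerman's construction by the corresponding ellipsoid, which changes nothing up to equivariant homeomorphism). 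Continuity and $T^n$-equivariance are immediate; surjectivity follows because $P$ is the orbit space of $M$. Then I would invoke Lemma~\ref{lemma:stabilizer} — whose proof depends only on which coordinates of $\pi^*(p)$ vanish, a condition unchanged by passing from $C_0$ to its slice $P$ — to conclude that $\gamma_0'$ descends to a continuous bijection $\gamma':P^\lambda=(T^n\times P)/\Delta\to M$. Finally, openness of $\gamma'$ (equivalently, that $\gamma_0'$ is an open map onto $M$, which one sees from the explicit formula and the fact that $T^n\times P$ is compact so $P^\lambda$ is compact and $M$ is Hausdorff) gives that $\gamma'$ is a homeomorphism, and it is $T^n$-equivariant by construction.

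The main obstacle I anticipate is purely bookkeeping: pinning down the correct ``slice'' so that the norm-one (or ellipsoid) condition in Lerman's construction of $M$ from $S$ matches the affine equation defining $P=C\cap H$ in Example~\ref{ex:contact combina}, and verifying that the facet correspondence $F_i\leftrightarrow\tilde F_i$ respects the characteristic map (so that $\lambda_i=v_i$ really is the normal to $F_i$, not to $\tilde F_i$, as the Example warns). Once that compatibility is nailed down, every other step is a verbatim transcription of the symplectic-cone argument, with compactness of $P$ replacing the need to argue openness by hand. I do not expect any genuinely new difficulty beyond this identification.
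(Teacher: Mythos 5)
Your proposal is correct and takes essentially the same route as the paper: embed $P^{\lambda}$ into $C_{0}^{\lambda}\cong S$ via the map from the preceding proposition and identify its image, the preimage of the slice $P$ under the moment map $\nu$, with $M$ by sliding along the radial direction of the symplectization. The paper phrases this more economically --- it simply restricts $\gamma$, uses $S\cong M\times\R$ with $\nu$ proportional in the $\R$-factor, and observes that $\nu^{-1}(P)$ is a graph over $M$ --- whereas you rebuild the map from scratch (re-invoking the stabilizer lemma and compactness), but the mathematical content, including the slice-versus-level-set bookkeeping you flag, is the same.
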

\begin{proof}
By definition, the symplectic cone $S$ is the symplectization of
$M$. Topologically, $S= M\times \R$. The map $\nu$ defined in
\eqref{eq:moment toric } is proportional on the second coordinate.

For each $q\in M$, there is a unique $x=x(q)\in \R$, such that
$$\nu(q,x(q))\in P .$$
Notice that $P^{\lambda}=(T^n\times P)/\Delta$ is a subset of
$(T^n\times C_{0})/\Delta$, so $(T^n\times P)/\Delta$ is
$T^n$-equivariantly homeomorphic to its image under $\gamma$ as
defined in \eqref{eq:gamma}. This is precisely the pre-image of $P$
under $\nu$, which is
$$\{(q,x(q))\in S=M\times \R : q\in M\}.$$ This is clearly $T^n$-equivariantly
homeomorphic to $M$.
\end{proof}

We now compute the $T^n$ equivariant cohomology of $M$, or
equivalently, of $P^{\lambda}$. Recall that $C$ is a cone over $P$.
We continue to let $\lambda$ denote the characteristic map defined
in Example~\ref{ex:contact combina}. Suppose
$v_{i}=(v_{i1},v_{i2},...,v_{in})$. Let $\tilde{\F}$ be the set of
facets of $P$, and $m=|\tilde{\F}|$, the number of facets. Define a
characteristic map
\begin{equation}\label{eq:mu}
 \mu
:\tilde{\F}\rightarrow \Z^m
\end{equation}
by sending $\tilde{F_{i}}$ to $e_{i}$, the $i^{th}$ standard basis
vector of $\Z^m$. Denote by $\Omega$ the equivalence relation on
$T^m\times P$ determined by $\mu$. This space $P^{\mu}=(T^m\times
P)/\Omega$ was first defined in \cite{DJ}.

\begin{lemma}\label{lemma:fiberK}
$P^{\mu}$ is a fiber bundle over $P^{\lambda}$ with fiber $K$, where
$K$ was defined in \eqref{eq:KT^mT^n}.
\end{lemma}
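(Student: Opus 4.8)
The plan is to construct an explicit fiber bundle map $P^{\mu}\to P^{\lambda}$ directly from the short exact sequence $1\to K\to T^m\xrightarrow{\pi_T} T^n\to 1$ of \eqref{eq:KT^mT^n}, and to identify the fibers with $K$. The natural candidate is the map $\Phi$ induced by $\pi_T\times \mathrm{id}$ on $T^m\times P$, i.e. $\Phi([t,p]_{\Omega})=[\pi_T(t),p]_{\Delta}$. First I would check this is well defined: if $(t,p)\Omega(s,p)$, then $t^{-1}s$ lies in the subgroup of $T^m$ generated by $\{e_i : p\in \tilde F_i\}$, so $\pi_T(t^{-1}s)$ lies in the subgroup of $T^n$ generated by $\{v_i : p\in \tilde F_i\}=\{\lambda_i : p\in F_i\}$, which is exactly $\Delta_p=S^\lambda_p$; hence $(\pi_T(t),p)\Delta(\pi_T(s),p)$. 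Continuity and surjectivity of $\Phi$ are immediate from continuity and surjectivity of $\pi_T$.

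Next I would compute the fiber over a point $[t_n,p]_{\Delta}$. A point $[s,p']_{\Omega}$ maps to it iff $p'=p$ and $\pi_T(s)\in t_n\cdot\Delta_p$, say $\pi_T(s)=\pi_T(t)g$ with $g\in S^\lambda_p$ and $t\in\pi_T^{-1}(t_n)$; absorbing a preimage of $g$ (which sits in the subgroup of $T^m$ generated by $\{e_i : p\in\tilde F_i\}$) into the $\Omega$-equivalence, one sees the fiber is $\{[s,p]_{\Omega} : \pi_T(s)=\pi_T(t)\} = \{[t\kappa,p]_{\Omega} : \kappa\in K\}$. Moreover $[t\kappa,p]_{\Omega}=[t\kappa',p]_{\Omega}$ forces $\kappa^{-1}\kappa'$ to lie in both $K$ and the subgroup generated by $\{e_i : p\in\tilde F_i\}$; but that subgroup is the coordinate subtorus on the indices $\{i : p\in\tilde F_i\}$, while $K=\ker\pi_T$, and by the goodness of the cone (condition (ii) of Definition~\ref{def:good cone}) the vectors $\{v_i : p\in F_i\}$ span a direct summand of $\Z^n$, which forces $\pi_T$ to be injective on that coordinate subtorus, so $\kappa^{-1}\kappa'=1$. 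Thus the fiber is a single coset, homeomorphic to $K$.

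To finish I would establish local triviality. Over each facet one has the standard covering of $P$ by the open stars $U_p$ of vertices (or the Davis--Januszkiewicz covering of $P^\mu$ by sets of the form $T^m\times_{\Omega}U$ where $U$ is a neighborhood on which the set of ``active'' facets is constant); over such a $U$, choosing a continuous section $\sigma:U\to T^m$ of $\pi_T$ composed appropriately, one gets a homeomorphism $\Phi^{-1}(T^n\times_\Delta U)\cong K\times (T^n\times_\Delta U)$, exactly as in the computation of the fibration $ET^m\times_{T^m}(\cdot)$ in \cite{DJ}. The main obstacle is the bookkeeping in this last step: one must check that the local trivializations are compatible with the quotient relations $\Omega$ and $\Delta$ simultaneously, i.e. that the identifications made along facets in $P^\mu$ are precisely the $\pi_T$-images of those made in $P^\lambda$, with no extra collapsing — and this is where condition (ii) of Definition~\ref{def:good cone} is essential, since it is what guarantees $\pi_T$ restricts to an isomorphism on each relevant coordinate subtorus and hence that $K$ meets each of them trivially.
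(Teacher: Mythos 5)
Your map $\Phi([t,p]_\Omega)=[\pi_T(t),p]_\Delta$, the well-definedness check, and the identification of each fiber with $K$ are correct, and your use of condition (ii) of Definition~\ref{def:good cone} is exactly the paper's key observation that $K\cap\Omega_p=1$ for every $p\in P$; the paper phrases the whole proof that way ($K$ acts freely on $P^{\mu}$ by multiplication on the first factor, the orbit space is $P^{\lambda}$, and the projection is your $\Phi$).

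The genuine gap is in your final step, local triviality. You claim a trivialization $\Phi^{-1}(T^n\times_{\Delta}U)\cong K\times (T^n\times_{\Delta}U)$ over the saturated sets $T^n\times_{\Delta}U$. This is false in general: for $q$ in the interior of $P$ the restriction of $\Phi$ to the free orbit $T^n\times_{\Delta}\{q\}\cong T^n$ is just the homomorphism $\pi_T\colon T^m\to T^n$, and when $K=\ker\pi_T$ is disconnected (a case the paper explicitly allows, since $\pi_{\Z}$ need not be surjective) this map admits no continuous section, because it factors through the nontrivial connected covering $T^m/K_{0}\to T^n$ with deck group $K/K_{0}$. So the bundle is already nontrivial over a single orbit and cannot be trivial over $T^n\times_{\Delta}U$; moreover your ``section $\sigma\colon U\to T^m$ of $\pi_T$'' is misstated, since a section of $\pi_T$ must be defined on an open subset of $T^n$, not of $P$. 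To repair the step, either shrink the trivializing opens, using images of $V\times U$ with $V\subset T^n$ small enough that $\pi_T$ has a continuous local section over $V$ (and then use that $\pi_T$ maps $\Omega_p$ isomorphically onto $\Delta_p$ to handle the face directions), or, more simply, argue as the paper implicitly does: $K$ is a compact Lie group acting freely on the compact Hausdorff space $P^{\mu}$ with orbit space $P^{\lambda}$, so by the slice theorem for compact group actions the quotient map is automatically a principal $K$-bundle, and no hand-made trivialization is needed.
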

\begin{proof} The group $K$ acts naturally on $P^{\mu}=(T^m\times
P)/\Omega$ by multiplication on the first coordinate. Since $C$ is a
good cone, $K\cap \Omega_{p}=1$ for every $p\in P$.
 Thus
the $K$ action on $P^{\mu}$ is free. The orbit space is exactly
$P^{\lambda}=(T^n\times P)/\Delta$. The projection from the total
space to the orbit space is given by the natural map
\begin{eqnarray*}\pi':& P^{\mu}=(T^m\times P)/\Omega&\rightarrow
P^{\lambda}=(T^n\times P)/\Delta\\     &[t_{m},p]&\mapsto
[\pi_{T}(t_{m}),p].
\end{eqnarray*}
\end{proof}

The torus $K$ may be disconnected. Denote by $K_{0}$ the connected
component of $K$ containing identity.

\begin{lemma}\label{lemma:torus retraction}
 There exists a group homomorphism $r: T^m\rightarrow K_{0}$,
 satisfying
$r\circ i|_{K_{0}}=id_{K_{0}}$, where $id_{K_{0}}$ denotes the
identity map on $K_{0}$ and $i$ is the inclusion of $K$ into $T^m$
as defined in \eqref{eq:KT^mT^n}.
 \end{lemma}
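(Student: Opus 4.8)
The statement asks for a group homomorphism $r: T^m \to K_0$ splitting the inclusion $i|_{K_0}: K_0 \hookrightarrow T^m$. The plan is to work entirely at the level of integral lattices, since a continuous homomorphism between tori is the same data as a $\Z$-linear map between their cocharacter lattices, and producing the splitting there automatically gives the splitting of tori after tensoring with $\R/\Z$. So first I would pass from the short exact sequence $1 \to K \to T^m \xrightarrow{\pi_T} T^n \to 1$ of \eqref{eq:KT^mT^n} to the corresponding sequence of lattices: the cocharacter lattice of $T^m$ is $\Z^m$, that of $T^n$ is $\Z^n$, and the map between them is $\pi_\Z: \Z^m \to \Z^n$ sending $e_i \mapsto v_i$, as in \eqref{eq:pi_Z contact}. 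Let $L$ denote the kernel of $\pi_\Z$ in $\Z^m$; this is a saturated (i.e.\ primitively embedded) sublattice, being the kernel of a map to a free abelian group, so $\Z^m/L$ is torsion-free and $L$ is the cocharacter lattice of $K_0$ — more precisely, $L$ generates $K_0$ as a subtorus of $T^m$, and the inclusion $K_0 \hookrightarrow T^m$ is induced by $L \hookrightarrow \Z^m$.

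Next, the key algebraic point: because $L = \ker(\pi_\Z)$ is a saturated sublattice of the free abelian group $\Z^m$, the quotient $\Z^m/L$ is free, hence the short exact sequence $0 \to L \to \Z^m \to \Z^m/L \to 0$ splits as $\Z$-modules. Choosing such a splitting gives a retraction $\rho: \Z^m \to L$ with $\rho|_L = \mathrm{id}_L$. Now I would define $r: T^m \to K_0$ to be the homomorphism of tori induced by $\rho$ (i.e.\ apply $-\otimes_\Z \R/\Z$, using that $T^m = \R^m/\Z^m$ and $K_0 = (\R \otimes_\Z L)/L$). Functoriality of $-\otimes \R/\Z$ together with $\rho|_L = \mathrm{id}_L$ gives $r \circ i|_{K_0} = \mathrm{id}_{K_0}$, which is exactly what Lemma~\ref{lemma:torus retraction} claims.

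The only genuine content here — and the step I would flag as the main (very mild) obstacle — is the identification of $K_0$ with the subtorus of $T^m$ determined by the saturated lattice $L = \ker(\pi_\Z)$, i.e.\ verifying that $\ker(\pi_T)_0$ really has cocharacter lattice $\ker(\pi_\Z)$ and that the inclusion of tori is the one induced by the lattice inclusion. This is a standard fact about tori (the connected component of the kernel of a torus homomorphism is the subtorus spanned by the kernel of the induced lattice map), and I would either cite it or give the two-line argument: $\ker(\pi_T) = \{x + \Z^m : \pi_\R(x) \in \Z^n\}/\Z^m$, whose identity component is $(\ker \pi_\R)/(\ker \pi_\R \cap \Z^m) = (\ker \pi_\R)/L = (\R \otimes_\Z L)/L$. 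Everything after that — saturatedness of the kernel, splitting of the lattice sequence, and naturality of $-\otimes\R/\Z$ — is routine and requires no further hypotheses on $C$; in particular, unlike Lemma~\ref{lemma:fiberK}, this lemma does not use goodness of the cone at all.
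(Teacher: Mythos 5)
Your proposal is correct and follows essentially the same route as the paper: both split the inclusion $\ker(\pi_{\Z})\hookrightarrow\Z^m$ at the lattice level (the paper argues the kernel is a direct summand via a Smith-normal-form/divisibility argument, you via torsion-freeness of $\Z^m/\ker(\pi_\Z)\hookrightarrow\Z^n$ -- the same saturation fact) and then induce the retraction $r:T^m\to K_0$ from the lattice retraction. Your extra care in identifying $K_0$ with $(\R\otimes_\Z \ker\pi_\Z)/\ker\pi_\Z$ is a point the paper leaves implicit, but it is not a different argument.
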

\begin{proof}Consider the
maps\[\xymatrix{
  0 \ar[r]^{} & ker(\pi_{\Z}) \ar[r]^{j} & \Z^m \ar[r]^{\pi_{\Z}} & \Z^n
  },\]
  where $\pi_{\Z}$ is defined in \eqref{eq:pi_Z contact} and $j$ is
  the inclusion map.
  As a subgroup of the free abelian group $\Z^m$, $ker(\pi_{\Z})$ is
  also a free abelian group. So with suitably chosen basis, the
  map $j$ looks like an inclusion \[b_{1}\Z\oplus b_{2}\Z\oplus \cdot\cdot\cdot \oplus b_{k}\Z
  \hookrightarrow \Z\oplus\Z\oplus \cdot\cdot\cdot \Z\oplus
  \Z^{m-k}.\]
  But notice that for every $x\in \Z^m$ and every $t\in \Z\backslash\{0\}$, \[x\in ker(\pi_{\Z})\Leftrightarrow tx\in
  ker(\pi_{\Z}).\]
  So all of the $b_{i}$ must be $1$. This means
  $ker(\pi_{\Z})$ is a direct summand of $\Z^m$, so there is a group homomorphism \begin{equation}
  r_{0}:\Z^m\rightarrow
  ker(\pi_{\Z}),\end{equation}
  such that
  \[r_{0}\circ j = id_{ker(\pi_{\Z})}.\]
   This
  $r_{0}$ induces a map
\begin{equation}\label{eq:torus retraction}
r: T^m\rightarrow K_{0}
\end{equation}
that satisfies the requirement of the lemma.
  \end{proof}

Using Lemma~\ref{lemma:torus retraction}, we can define an action of
$T^m$ on $EK_{0}\times ET^n$ by first sending $t_{m}\in T^m$ to
$(r(t_{m}), \pi(t_{m}))\in K_{0}\times T^n$, then using diagonal
action of this element on $EK_{0}\times ET^n$.

\begin{lemma}\label{lemma:EK0 bundle}
 The space $(EK_{0}\times ET^n)\times_{T^m}((T^m\times P)/\Omega)$ is a
fiber bundle over\\  $ET^n\times_{T^n}((T^n\times P)/\Delta)$ with
fiber $EK_{0}$. Thus these two spaces are homotopy equivalent.
\end{lemma}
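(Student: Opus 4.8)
The plan is to exhibit the stated map as a fiber bundle by a direct construction, using the torus retraction $r$ from Lemma~\ref{lemma:torus retraction} and the fact that $P^\mu$ is a $K$-bundle over $P^\lambda$ (Lemma~\ref{lemma:fiberK}). First I would define the projection
\begin{equation*}
\Pi : (EK_{0}\times ET^n)\times_{T^m}\bigl((T^m\times P)/\Omega\bigr)\longrightarrow ET^n\times_{T^n}\bigl((T^n\times P)/\Delta\bigr)
\end{equation*}
by sending $[e_{0},e_{n},[t_{m},p]]$ to $[e_{n},[\pi_{T}(t_{m}),p]]$. One checks this is well defined: an element $t_{m}'\in T^m$ acts on the left factor by $(r(t_{m}'),\pi(t_{m}'))$ and on $(T^m\times P)/\Omega$ by multiplication on $t_{m}$, so the $ET^n$-coordinate is twisted by $\pi(t_{m}')$ and the $P^\mu$-coordinate maps to $[\pi_{T}(t_{m}'t_{m}),p]=\pi(t_{m}')\cdot[\pi_{T}(t_{m}),p]$ — these cancel in $ET^n\times_{T^n}P^\lambda$. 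I must also verify that the $\Omega$-equivalence on $T^m\times P$ maps into the $\Delta$-equivalence on $T^n\times P$, which is exactly the content of the map $\pi'$ in Lemma~\ref{lemma:fiberK}.

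Next I would identify the fiber. Fixing a point $[e_{n},[\pi_{T}(t_{m}),p]]$ in the base, its preimage under $\Pi$ consists of the $[e_{0},e_{n}',[t_{m}',p']]$ with $p'=p$, $\pi(t_{m}')\,e_{n}'$ and $e_{n}$ in the same $T^n$-orbit, and $\pi_{T}(t_{m}')\equiv\pi_{T}(t_{m})$ modulo $\Delta_{p}$. Using the $T^m$-action on the triple to normalize, one reduces to $[e_{0}',\ast,[\,\cdot\,,p]]$ and sees the remaining freedom is precisely $K_{0}$ acting freely on the $EK_{0}$-factor. The key point making this clean is that $K$ acts freely on $P^\mu$ (so $K_{0}$ does too, established in Lemma~\ref{lemma:fiberK}); combined with the freeness of the $EK_{0}$-factor this gives a genuinely free residual action with quotient a point, i.e.\ fiber $EK_{0}$. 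To upgrade "set-theoretic fiber" to "locally trivial fiber bundle," I would cover the base $P^\lambda$ by open sets over which the $K$-bundle $P^\mu\to P^\lambda$ of Lemma~\ref{lemma:fiberK} is trivial, and over the corresponding open sets of $ET^n\times_{T^n}P^\lambda$ pull back those trivializations; the retraction $r$ provides the compatible splitting $T^m\to K_{0}\times\ker$ needed to write down explicit local product charts.

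The main obstacle I expect is the bookkeeping of the three simultaneous quotients — by $T^m$ on the outside, by $\Omega$ on $P^\mu$, by $T^n$ on the base — and in particular checking that the local trivializations of the two bundles ($K\to P^\mu\to P^\lambda$ on one hand, $K_{0}\hookrightarrow T^m\twoheadrightarrow T^n$ via $r$ on the other) patch together $T^m$-equivariantly so that the Borel construction glues correctly. This is where the splitting from Lemma~\ref{lemma:torus retraction} is essential: it lets one replace the extension $1\to K\to T^m\to T^n\to 1$ by (a finite cover of) a product $K_{0}\times T^n$ locally, so that $EK_{0}\times ET^n$ with the $T^m$-action becomes, up to homotopy, $EK_{0}\times(ET^n\times_{K_{0}}\!\text{-part})$ and the fiber $EK_{0}$ splits off. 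Once local triviality is in hand, contractibility of $EK_{0}$ gives the homotopy equivalence of total space and base immediately.
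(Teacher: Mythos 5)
Your proposal is correct and takes essentially the same route as the paper: the same projection $[x,y,[t,p]]\mapsto[y,[\pi(t),p]]$, the same well-definedness check, identification of the fiber over a base point using Lemma~\ref{lemma:fiberK} together with the retraction $r$ of Lemma~\ref{lemma:torus retraction}, and contractibility of $EK_{0}$ for the homotopy equivalence. The one detail the paper is more careful about is that the residual group after normalizing is the possibly disconnected $K$ rather than $K_{0}$, so the fiber appears as $(EK_{0}\times K)/K$ and is identified with $EK_{0}$ via $[x,k]\mapsto r(k)^{-1}x$.
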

\begin{proof}
Define the projection map by
 \[[x,y,[t,p]]\mapsto
[y,[\pi(t),p]],\] where $x\in EK_{0}$, $y\in ET^n$, $t\in T^m$,
$p\in P$. The square brackets $[\ ]$ are used whenever there is a
equivalence relation involved.

It is easily shown that the map is well-defined. As for the fiber,
pick any point $[y_{0},[t_{n},p]]$ in the base space, with $y_{0}\in
ET^n$, $t_{n}\in T^n$, $p\in P$.
 Assume $[x,y,[t,p]]$
is in the pre-image, then \[[y,[\pi(t),p]]=[y_{0},[t_{n},p]].\]
 So there exists
$s\in T^n$ such that \[sy=y_{0},\ \ \  \mbox{and}\]
\[[s\cdot\pi(t),p]=[t_{n},p].\]
Since $\pi_{T}$ is surjective, so there is an $s'\in T^m$ such that
$\pi(s')=s$.
 Then \[[x,y,[t,p]]=[r(s')x,\pi(s')y,[s't,p]]=[r(s')x,
y_{0}, [s't,p]],\ \mbox{and}\]
 \[ \pi'[(s't),p]=[\pi(s')\pi(t),p]= [s\cdot\pi(t),p]= [t_{n},p],\]
 where $\pi'$ is defined in Lemma~\ref{lemma:fiberK}.

This means when considering the fiber over $[y_{0},[t_{n},p]]$, we
only need to use representatives of the form $[x,y_{0},[t,p]],$
where $y_{0}$ is fixed, and $\pi'[t,p]= [t_{n},p]$.

By Lemma~\ref{lemma:fiberK}, we know that the set of points
$[t,p]\in P^{\lambda}$ that satisfy $\pi'[t,p]=[t_{n},p]$ is
homeomorphic to $K$.
  The elements in
$T^m$ that fix $y_{0}$ are $K$.
 So the fiber over $[y,[t_{n},p]]$
is homeomorphic to $(EK_{0}\times K)/K$, which is homeomorphic to
$EK_{0}$ via the map \[[x,k]\mapsto r(k)^{-1}x,\] noticing that $K$
acts on $EK_{0}$ by first mapping $k\in K$ to $r(k)\in K_{0}$, then
applying $r(k)$ to $EK_{0}$.
\end{proof}

Notice that the cohomology of $ET^n\times_{T^n}((T^n\times
P)/\Delta)$ is exactly what we want to compute: $H_{T^n}^{*}(M;\Z)$.

Davis and Januskiewicz computed
$H_{T^m}^{*}(P^\mu;\Z)=H^{*}(ET^m\times_{T^m}((T^m\times
P)/\Omega);\Z)$ in \cite{DJ}. More details can be found in
\cite[434-436]{DJ}. So the remaining task is to compare the two
spaces\[(EK_{0}\times ET^n)\times_{T^m}((T^m\times P)/\Omega)\]
 and
\[ET^m\times_{T^m}((T^m\times P)/\Omega).\]

\begin{lemma}\label{lemma:AG Property}
The two spaces
\begin{equation}
(EK_{0}\times ET^n)\times_{T^m}((T^m\times P)/\Omega)
\end{equation}
and
\begin{equation}
ET^m\times_{T^m}((T^m\times P)/\Omega)
\end{equation}
are homotopy equivalent.
\end{lemma}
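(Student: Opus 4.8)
The plan is to exhibit both spaces as bundles with contractible fiber over a common base, namely the Borel construction $ET^n\times_{T^n}((T^n\times P)/\Delta)$, and conclude via the long exact sequence (or the fact that a fibration with contractible fiber is a homotopy equivalence). For the first space, Lemma~\ref{lemma:EK0 bundle} already does exactly this: it is a bundle over $ET^n\times_{T^n}((T^n\times P)/\Delta)$ with fiber $EK_{0}$, which is contractible. So the work is to produce an analogous statement for $ET^m\times_{T^m}((T^m\times P)/\Omega)$.

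First I would observe that $ET^m$ may be taken to be $EK_{0}\times ET^n \times E(T^m/?)$—more precisely, using the homomorphism $r\times\pi_T : T^m \to K_{0}\times T^n$ from Lemma~\ref{lemma:torus retraction}, together with the inclusion $i$, one gets that $ET^m$ is a model for a contractible free $T^m$-space, and so is $EK_{0}\times ET^n \times ET^m$ with the diagonal action (where $T^m$ acts on the first two factors through $r$ and $\pi$ respectively, and standardly on the third). Both are contractible free $T^m$-spaces, hence $T^m$-equivariantly homotopy equivalent, so
\begin{equation*}
ET^m\times_{T^m}((T^m\times P)/\Omega) \simeq (EK_{0}\times ET^n\times ET^m)\times_{T^m}((T^m\times P)/\Omega).
\end{equation*}
Projecting away the $ET^m$ factor, the right-hand space fibers over $(EK_{0}\times ET^n)\times_{T^m}((T^m\times P)/\Omega)$ with fiber $ET^m$, which is contractible; so this space is homotopy equivalent to $(EK_{0}\times ET^n)\times_{T^m}((T^m\times P)/\Omega)$, which is the first of the two spaces in the lemma. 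Chaining the equivalences gives the claim.

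An alternative, perhaps cleaner, route avoids introducing an extra $ET^m$ factor: directly show that $ET^m\times_{T^m}((T^m\times P)/\Omega)$ is a bundle over $ET^n\times_{T^n}((T^n\times P)/\Delta)$ with fiber $ET^m/K$—but since $K$ need not act freely in a way that makes $ET^m/K$ a model for $EK$... actually $ET^m$ is a free $K$-space too, so $ET^m/K$ is homotopy equivalent to $BK$, which is \emph{not} contractible. So that naive projection does not work, and this is precisely why Davis--Januszkiewicz and we must pass through the intermediate space $(EK_{0}\times ET^n)\times_{T^m}(\cdots)$ rather than going directly: the point of replacing $ET^m$ by $EK_{0}\times ET^n$ is to arrange that the fiber of the projection to the $T^n$-Borel construction is $EK_{0}$ (contractible) instead of $EK$ (not contractible, unless $K$ is connected). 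I would therefore present the first route above as the proof.

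The main obstacle is the bookkeeping around the possibly-disconnected torus $K$ and verifying that the diagonal $T^m$-action on $EK_{0}\times ET^n$ (defined via $r$ and $\pi$) is indeed free, so that the Borel-type quotient behaves well. Freeness holds because $T^m$ already acts freely on $ET^n$ through $\pi_T$—wait, $\pi_T$ is not injective, so the $ET^n$ factor alone does not give freeness; it is the combination with the $(T^m\times P)/\Omega$ factor, whose $T^m$-stabilizers are the $\Omega_p$, together with $K_0\cap\Omega_p=1$ and $\ker(\pi_T)=K$, that forces any element fixing a point of $(EK_{0}\times ET^n)\times((T^m\times P)/\Omega)$ to lie in $K\cap K_0\cap\Omega_p$-type intersections and hence be trivial. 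Pinning down this freeness (or, equivalently, citing that Lemma~\ref{lemma:EK0 bundle} implicitly uses it) and confirming that $EK_0\times ET^n$ with this action is a legitimate model whose quotient by $T^m$ has the right homotopy type is the one genuinely non-formal point; everything else is the standard ``fibration with contractible fiber'' argument applied twice.
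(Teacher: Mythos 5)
Your proposal is correct and is essentially the paper's own argument: the paper likewise introduces $W=EK_{0}\times ET^{n}\times ET^{m}$ with $t_{m}\in T^{m}$ acting by $(r(t_{m}),\pi(t_{m}),t_{m})$ and compares the two spaces via the two projections from $W\times_{T^m}(T^m\times P)/\Omega$, each a fiber bundle with contractible fiber ($EK_{0}\times ET^{n}$ for the projection to $ET^m\times_{T^m}(T^m\times P)/\Omega$, and $ET^{m}$ for the projection to $(EK_{0}\times ET^{n})\times_{T^m}(T^m\times P)/\Omega$), using freeness of the diagonal action exactly as you indicate. The one point to state precisely is that the stabilizer of a point of $(EK_{0}\times ET^{n})\times(T^m\times P)/\Omega$ lies in $K\cap\Omega_{p}$ (nothing forces it into $K_{0}$), so the fact you need is $K\cap\Omega_{p}=1$ --- which holds because $C$ is a good cone, as already used in Lemma~\ref{lemma:fiberK} --- rather than only $K_{0}\cap\Omega_{p}=1$.
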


\begin{proof}
Let $W= EK_{0}\times ET^n\times ET^m$. An element $t_{m}\in T^m$
acts on $W$ by the diagonal action of $(r(t_{m}), \pi(t_{m}),
t_{m})$.

The projection
\begin{equation}\label{eq:pp1}
p_{1}: W\times_{T^m}(T^m\times P)/\Omega \rightarrow
ET^m\times_{T^m}(T^m\times P)/\Omega
\end{equation} is a fiber
bundle with fiber $EK_{0}\times ET^n$, which is contractible, so
\begin{equation}\label{eq:p1}
W\times_{T^m}(T^m\times P)/\Omega\stackrel{h.e.}{\sim}
ET^m\times_{T^m}(T^m\times P)/\Omega , \end{equation} where
$\stackrel{h.e.}{\sim}$ stands for `homotopy equivalent'.

Suppose $t_{m}\in T^m$. If $t_{m}\not\in K$, then $\pi(t_{m})\neq
1$, so $t_{m}$ acts on $EK_{0}\times ET^n$ with no fixed-points. If
$t_{m}\not\in \Omega_{p}$ for any $p\in P$, then $t_{m}$ acts on
$(T^m\times P)/\Omega$ with no fixed-points.  Since $K\cap
\Omega_{p}=1$ for all $p\in P$, the diagonal action of $T^m$  on
$(EK_{0}\times ET^n)\times (T^m\times P)/\Omega$ is free.

So the projection
\begin{equation}\label{eq:pp2}
p_{2}: W\times_{T^m}(T^m\times P)/\Omega \rightarrow (EK_{0}\times
ET^n)\times_{T^m} (T^m\times P)/\Omega
\end{equation}
is a fiber bundle with fiber $ET^m$. So
\begin{equation}\label{eq:p2}
W\times_{T^m}(T^m\times P)/\Omega \stackrel{h.e.}{\sim}
(EK_{0}\times ET^n)\times_{T^m} (T^m\times P)/\Omega.
\end{equation}
Combining \eqref{eq:p1} and \eqref{eq:p2} completes the proof.
\end{proof}

\begin{definition}\label{def:I}
Define $\I$ to be the ideal of $\Z[x_{i},x_{2},...,x_{m}]$ or
$\Q[x_{i},x_{2},...,x_{m}]$ generated by monomials
\begin{equation*}\left\{x_{i_{1}}x_{i_{2}}\cdots x_{i_{l}}:
\bigcap_{j=1}^{l}\widetilde{F_{i_{j}}}=\emptyset\right\}.\end{equation*}
The coefficients used will be clear in context.
\end{definition}

\begin{theorem}\label{theorem:equivariant cohomology contact}
 If $M$ is the contact toric manifold associated with the strictly convex good cone \[C= \bigcap_{i=1}^{m}\{x\in
(t^{n})^{*}=\R^n: \langle x, v_{i}\rangle \geq 0\}\] then
\[H_{T^n}^{*}(M;\Z)\simeq \Z[x_{1},...,x_{m}]/\I,\]
where $x_{i}\in H^{2}_{T^n}(M;\Z)$.
\end{theorem}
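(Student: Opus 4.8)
The plan is to assemble the pieces already established and then invoke the Davis--Januszkiewicz computation. By Proposition~\ref{prop:contact combina}, $M$ is $T^n$-equivariantly homeomorphic to $P^{\lambda}=(T^n\times P)/\Delta$, so by the definition of equivariant cohomology $H_{T^n}^{*}(M;\Z)=H_{T^n}^{*}(P^{\lambda};\Z)=H^{*}(ET^n\times_{T^n}P^{\lambda};\Z)$. Hence it suffices to compute the ordinary cohomology ring of the Borel construction $ET^n\times_{T^n}P^{\lambda}$.

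Next I would string the homotopy equivalences together. Lemma~\ref{lemma:EK0 bundle} exhibits $(EK_{0}\times ET^n)\times_{T^m}P^{\mu}$ as a fibre bundle over $ET^n\times_{T^n}P^{\lambda}$ with contractible fibre $EK_{0}$, hence a homotopy equivalence, and Lemma~\ref{lemma:AG Property} provides a zig-zag of homotopy equivalences between $(EK_{0}\times ET^n)\times_{T^m}P^{\mu}$ and $ET^m\times_{T^m}P^{\mu}$ (through the space $W\times_{T^m}P^{\mu}$, both of whose projections have contractible fibres). Composing these, $ET^n\times_{T^n}P^{\lambda}$ is homotopy equivalent to $ET^m\times_{T^m}P^{\mu}$, so there is a graded ring isomorphism
\[H_{T^n}^{*}(M;\Z)\;\cong\;H^{*}(ET^m\times_{T^m}P^{\mu};\Z)\;=\;H_{T^m}^{*}(P^{\mu};\Z).\]
I would also track the distinguished classes through this chain: the generator $x_{i}\in H^{2}_{T^m}(pt;\Z)$ pulls back along $P^{\mu}\to pt$ to a class in $H^{2}_{T^m}(P^{\mu};\Z)$, and transporting it by the (invertible) maps above produces a degree-$2$ class in $H^{*}(ET^n\times_{T^n}P^{\lambda};\Z)=H_{T^n}^{*}(M;\Z)$, which is the $x_{i}$ of the statement.

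Finally I would invoke the result of Davis and Januszkiewicz. As recalled in the text, they compute in \cite{DJ} that $H_{T^m}^{*}(P^{\mu};\Z)$, the equivariant cohomology of the moment-angle complex $P^{\mu}$ attached to the simple polytope $P$, is the Stanley--Reisner (face) ring of $P$: it is $\Z[x_{1},\dots,x_{m}]$ modulo the ideal generated by the squarefree monomials $x_{i_{1}}\cdots x_{i_{l}}$ corresponding to non-faces, i.e. to subsets of facets with $\widetilde{F_{i_{1}}}\cap\cdots\cap\widetilde{F_{i_{l}}}=\emptyset$. This ideal is precisely $\I$ of Definition~\ref{def:I}; allowing non-squarefree monomials among the generators changes nothing, since any such monomial with empty facet intersection is divisible by a squarefree non-face monomial. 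Combining with the previous paragraph yields $H_{T^n}^{*}(M;\Z)\cong\Z[x_{1},\dots,x_{m}]/\I$ with $x_{i}$ in degree $2$. I do not expect a deep obstacle here: the only points needing care are bookkeeping ones --- checking that the zig-zag of homotopy equivalences is compatible with cup products (it is, being induced by honest maps of spaces) and carries the generators $x_{i}$ as described, and observing that the Davis--Januszkiewicz computation requires only that $P$ be \emph{simple}, with no smoothness hypothesis. The latter is exactly why this equivariant theorem, unlike the integral ordinary cohomology computation of Section~\ref{sec:cohomology contact}, needs no smoothness criterion on the cone $C$.
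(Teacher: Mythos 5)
Your proposal is correct and follows essentially the same route as the paper: identify $M$ with $P^{\lambda}$ via Proposition~\ref{prop:contact combina}, chain the homotopy equivalences of Lemma~\ref{lemma:EK0 bundle} and Lemma~\ref{lemma:AG Property} to reduce $H_{T^n}^{*}(M;\Z)$ to $H_{T^m}^{*}(P^{\mu};\Z)$, and then quote the Davis--Januszkiewicz computation (Theorem 4.8 of \cite{DJ}) to get $\Z[x_{1},\dots,x_{m}]/\I$. Your extra remarks --- tracking the degree-$2$ generators through the zig-zag and noting that only simplicity of $P$, not smoothness, is needed --- are accurate elaborations of what the paper leaves implicit.
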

\begin{proof} According to Lemma~\ref{lemma:EK0 bundle},\[H_{T^n}^{*}(M;\Z)=H^{*}(ET^n\times_{T^n} P^{\lambda},\Z)=
H^{*}((EK_{0}\times ET^n)\times_{T^m}P^{\mu};\Z)\]
 According to Lemma~\ref{lemma:AG Property}, they are equal
to \[H_{T^m}^{*}(P^{\mu};\Z).\] Then
Theorem~\ref{theorem:equivariant cohomology contact} follows from
Theorem 4.8 in \cite{DJ}.
\end{proof}

\begin{corollary}\label{cor:H^1 of M}
For a good contact toric manifold $M$, we have $H^{1}(M;\Z)=0$.
\end{corollary}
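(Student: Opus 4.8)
The plan is to extract $H^1(M;\Z)$ from the equivariant cohomology computed in Theorem~\ref{theorem:equivariant cohomology contact} by using the Serre spectral sequence (or equivalently the fibration sequence) associated to the Borel construction. Recall that $ET^n\times_{T^n}M$ fibers over $BT^n$ with fiber $M$. Since $M$ is connected and $BT^n$ is simply connected, the spectral sequence of this fibration has $E_2^{p,q}=H^p(BT^n;\Z)\otimes H^q(M;\Z)$ (no local coefficient subtleties), converging to $H^*_{T^n}(M;\Z)$.

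First I would record the low-degree consequences. Since $H^{\mathrm{odd}}(BT^n;\Z)=0$, we have $E_2^{1,0}=H^1(BT^n;\Z)=0$. The only contributions to $H^1_{T^n}(M;\Z)$ come from $E_2^{0,1}=H^1(M;\Z)$ and $E_2^{1,0}=0$. The differential out of $E_2^{0,1}$ is $d_2\colon E_2^{0,1}\to E_2^{2,0}=H^2(BT^n;\Z)$, so $E_\infty^{0,1}$ is a subgroup of $H^1(M;\Z)$, and in fact $H^1_{T^n}(M;\Z)\cong E_\infty^{0,1}=\ker d_2$. On the other hand, Theorem~\ref{theorem:equivariant cohomology contact} tells us $H^*_{T^n}(M;\Z)\cong \Z[x_1,\dots,x_m]/\I$ with all generators $x_i$ in degree $2$, so $H^1_{T^n}(M;\Z)=0$. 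Hence $\ker d_2=0$, i.e.\ $d_2\colon H^1(M;\Z)\to H^2(BT^n;\Z)$ is injective.

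To finish I would argue that this $d_2$ is also the zero map, forcing $H^1(M;\Z)=0$. The cleanest way: consider the low-degree exact sequence of the fibration (five-term exact sequence), which reads
\[
0\to H^1(BT^n;\Z)\to H^1_{T^n}(M;\Z)\to H^1(M;\Z)\xrightarrow{\,d_2\,} H^2(BT^n;\Z)\to H^2_{T^n}(M;\Z).
\]
Since $H^1(BT^n;\Z)=0$ and $H^1_{T^n}(M;\Z)=0$, exactness shows $H^1(M;\Z)$ injects into $H^2(BT^n;\Z)$ via $d_2$; but also the map $H^2(BT^n;\Z)=\Z[x_1,\dots,x_m]^{(2)}\,/\,(\text{nothing in degree }2)\to H^2_{T^n}(M;\Z)$ induced by $\pi^*$ is exactly the natural surjection $\Z^n\twoheadrightarrow \Z[x_1,\dots,x_m]/\I$ in degree $2$ (the map $BT^n$-direction), and one checks from the presentation that $\I$ contains no linear forms, so this map is injective in degree $2$ as well. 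An injective map followed by an injective map with the composite appearing in the exact sequence forces the image of $d_2$ to be zero, hence $d_2=0$, hence $H^1(M;\Z)\hookrightarrow H^2(BT^n;\Z)$ has trivial image, giving $H^1(M;\Z)=0$.

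The main obstacle is pinning down precisely that the map $H^2(BT^n;\Z)\to H^2_{T^n}(M;\Z)$ is injective, equivalently that the ideal $\I$ contains no nonzero element of degree $2$ (no $x_i$ and no single $x_i$ up to the relations). This is immediate from Definition~\ref{def:I}: $\I$ is generated by squarefree monomials $x_{i_1}\cdots x_{i_l}$ with $\bigcap \widetilde{F_{i_j}}=\emptyset$, and such a product is a generator only when $l\ge 2$ (for $l=1$, $\widetilde{F_i}\ne\emptyset$), so $\I$ is generated in degrees $\ge 4$ and in particular $\I\cap \Z[x_1,\dots,x_m]^{(2)}=0$. With that, the composition argument in the five-term sequence closes the proof. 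Alternatively, and perhaps most efficiently, one can skip the injectivity discussion entirely: $H^1(M;\Z)$ is a finitely generated abelian group, and since $M$ is a closed orientable $(2n-1)$-manifold it suffices to show it has no free part and no torsion; the five-term sequence already gives $H^1(M;\Z)\cong \mathrm{coker}\big(H^1_{T^n}(M;\Z)\to H^1(M;\Z)\big)$ sitting inside $H^2(BT^n;\Z)\cong \Z^n$, so $H^1(M;\Z)$ is free, and then the injection $H^1(M;\Z)\hookrightarrow H^2(BT^n)\to H^2_{T^n}(M)$ kills it because the composite factors through the exact sequence. I expect the write-up to be three or four lines once the spectral sequence is invoked.
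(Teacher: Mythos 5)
Your first step coincides with the paper's: from Theorem~\ref{theorem:equivariant cohomology contact} the odd part of $H^{*}_{T^n}(M;\Z)$ vanishes, so in the Serre spectral sequence of $M\hookrightarrow ET^n\times_{T^n}M\to BT^n$ the differential $d_2\colon H^1(M;\Z)\to H^2(BT^n;\Z)$ is injective and $H^1(M;\Z)$ is torsion-free. Where you diverge is in trying to finish purely from the five-term sequence by showing that $H^2(BT^n;\Z)\to H^2_{T^n}(M;\Z)$ is injective, and this is where there is a genuine gap. You identify $H^2(BT^n;\Z)$ with the degree-two part of $\Z[x_1,\dots,x_m]$ and deduce injectivity from the fact that $\I$ contains no degree-two elements. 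But $\I$ having no linear part (true, by Definition~\ref{def:I}, since each single facet is nonempty) only says that the degree-two part of $\Z[x_1,\dots,x_m]/\I$ is free of rank $m$; the map you actually need is from $H^2(BT^n;\Z)\cong\Z^n$, not $\Z^m$, and under the identifications of Section~\ref{sec:equi contact} it sends the $j$-th generator to the linear form $\sum_{i=1}^{m}v_{ij}x_i$ (the analogue of the $J_j$'s appearing later in the paper). Its injectivity is not formal: it holds precisely because $v_1,\dots,v_m$ span $\R^n$, which uses strict convexity of $C$ (the dual cone is generated by the $v_i$ and is full-dimensional, as in the proof of Lemma~\ref{lemma:equiva of Assu A}). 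Neither the identification of the map nor this spanning fact appears in your argument, so the decisive step is unproved as written. Your closing ``alternative that skips the injectivity discussion'' is circular: the composite $H^1(M;\Z)\xrightarrow{d_2}H^2(BT^n;\Z)\to H^2_{T^n}(M;\Z)$ is zero by exactness regardless, so it yields nothing beyond torsion-freeness.

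For comparison, the paper does not try to control the image of $d_2$ at all: after obtaining torsion-freeness exactly as you do, it quotes Lerman's theorem that $\pi_1(M)$ is finite \cite{L:homotopy group}, so $H_1(M;\Z)$ is finite and $H^1(M;\Z)$ has no free part, whence $H^1(M;\Z)=0$. Your route, once repaired by identifying the degree-two structure map as $e_j\mapsto\sum_i v_{ij}x_i$ and invoking the spanning of the $v_i$, would give a self-contained combinatorial proof avoiding the external homotopy-theoretic input, which is an attractive feature; but in its present form the key injectivity rests on a misidentified map and an insufficient reason.
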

\begin{proof}
Consider the spectral sequence of the fiber bundle
\begin{equation}
M\hookrightarrow ET^n\times_{T^n}M \rightarrow BT^n
\end{equation}
If $E_{2}^{0,1}=H^{1}(M;\Z)$ has torsion, it will be in the kernel
of
\begin{equation}
d_{2}: E_{2}^{0,1}\rightarrow E_{2}^{2,0}
\end{equation}
since $E_{2}^{2,0}=H^{2}(BT^n;\Z)=\Z^n$ is free. So the torsion part
will live to $E_{\infty}^{0,1}$. Thus $H^{1}(ET^n\times_{T^n}M;\Z)$
will have torsion and contradicts Theorem~\ref{theorem:equivariant
cohomology contact}, from which it easily follows that
$H^{1}(ET^n\times_{T^n}M;\Z)=0$.  So $H^{1}(M;\Z)$ is torsion-free.

Theorem 1.1 in \cite{L:homotopy group} tells us that $\pi_{1}(M)$ is
a finite group. So $H_{1}(M;\Z)$ is finite, and $H^{1}(M;\Z)$ has no
free part.

Thus, we may conclude that $H^{1}(M;\Z)=0$.
\end{proof}

\section{\bf Cohomology and equivariant cohomology of symplectic toric
manifolds}\label{sec:symplectic orbifold} In this section, we recall
some old construction and facts about symplectic manifolds. This
will help to set up notations for Section \ref{sec:cohomology
contact}. In Section \ref{sec:cohomology contact}, we will take
$k=n-1$.

 First we will briefly recall the classical Delzant construction of a symplectic toric
 maniifold.
Let the convex Delzant polytope be as in \eqref{eq:simple polytope}:
\begin{equation}
P= \bigcap_{i=1}^{m}\{x\in \R^k: \langle x, \tilde{v_{i}}\rangle
\geq \eta_{i}\}.\end{equation}
 Then define a map
\begin{equation}\label{eq:pi_Z}
\pi_{\Z}: \Z^m\rightarrow \Z^k
\end{equation}
by sending the $i^{th}$ standard basis vector $e_{i}$ of $\Z^m$ to
$\tilde{v_{i}}$. This induces a map
\begin{equation}
\pi_{\R}: \R^m\rightarrow \R^k
\end{equation}
by tensoring with $\R$. We then get a map
\begin{equation}\label{eq:pi_T}
\pi_{T}: T^m=\R^m/\Z^m \rightarrow T^k=\R^k/\Z^k
\end{equation}
The subscripts $\Z,\R,T$ will be omitted sometimes when it will not
cause confusion.

Letting $K=ker(\pi_{T})$, we have a short exact sequence of groups
\begin{equation}\label{eq:KT^mT^k}
\xymatrix{
  1 \ar[r]^{} & K \ar[r]^{i} & T^m \ar[r]^{\pi_{T}} & T^k \ar[r]^{} & 1.   }
\end{equation}
This induces maps on Lie algebras
\begin{equation}
\xymatrix{
  0 \ar[r]^{} & \mathfrak{k} \ar[r]^{i_{*}} & \mathfrak{t}^m \ar[r]^{\pi_{*}} & \mathfrak{t}^k \ar[r]^{} & 0 ,  }
\end{equation}
and maps on the duals of Lie algebras
\begin{equation}\xymatrix{0 & \mathfrak{k}^*\ar[l]^{} &
(\mathfrak{t}^m)^* \ar[l]_{i^*}& (\mathfrak{t}^k)^*\ar[l]_{\pi^*}&
0.\ar[l]}
\end{equation}

Let $u:\C^m\rightarrow (\mathfrak{t}^m)^*$ be defined by
\begin{equation}\label{eq:std moment}
u(z_{1},...,z_{m})=(|z_{1}|^2,...,|z_{m}|^2)
\end{equation}
Then
\begin{equation}\label{eq:toric manifold as quotient}
N=((i^*\circ u)^{-1}(i^{*}(-\eta)))/K
\end{equation}
is the symplectic toric manifold associated to the polytope $P$,
where $\eta=(\eta_{1},...,\eta_{m})\in \R^m= (\mathfrak{t}^m)^*$.

The standard action of $T^m$ on $\C^m$ restricts to a $T^m$-action
on $(i^*\circ u)^{-1}(i^{*}(-\eta))$, and hence a $K$-action on
$(i^*\circ u)^{-1}(i^{*}(-\eta))$. It induces a $T^k\cong T^m/K$
action on $N=((i^*\circ u)^{-1}(i^{*}(-\eta)))/K$.

In other words, the action of $t_{k}\in T^k$ on $N$ is induced by
the standard action of any element in $\pi^{-1}(t_{k})$ on
$(i^*\circ u)^{-1}(i^{*}(-\eta))$. This action is Hamiltonian with
moment map
\begin{eqnarray}\label{eq:moment map}
\nu: & N& \rightarrow  (\mathfrak{t}^k)^{*}\\
&[z_{1},...,z_{m}]& \mapsto (\pi^{*})^{-1}(u(z_{1},...,z_{m})+\eta),
\end{eqnarray}
using that $\pi^{*}$ is injective. The image of $\nu$ is $P$. It is
easy to check from the construction that $P$ is the orbit space of
the $T^k$-action on $N$.

Following the same line as Proposition~\ref{prop:contact combina},
we have the following.

\begin{proposition}\label{prop: DJ construction is OK for sym orb}
With the same notation as in Example~\ref{ex:sym orb combinatorics},
$P^{\tilde{\lambda}}$ is $T^{k}$-equivariantly homeomorphic to $N$,
the symplectic manifold associated to $P$.
\end{proposition}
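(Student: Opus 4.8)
The statement is a direct analogue of Proposition~\ref{prop:contact combina}, so I would prove it by mimicking that proof almost verbatim, substituting the affine Delzant construction of Section~\ref{sec:symplectic orbifold} for the conical construction of Section~\ref{sec:equi contact}. Concretely: define $\gamma_0\colon T^k\times P\to N$ by $(t_k,p)\mapsto t_k\cdot[u_0^{-1}(\pi^*(p)+\eta)]$, where $u_0$ is the restriction of $u$ (see \eqref{eq:std moment}) to the nonnegative orthant $(\R^m)^+=\{(z_1,\dots,z_m): z_i\in\R_{\geq 0}\}$, which is a homeomorphism onto its image. The point of the shift by $\eta$ is that the fiber of the moment map $\nu$ in \eqref{eq:moment map} over $p\in P$ consists of the $z$ with $u(z)+\eta=\pi^*(p)$, i.e. $u(z)=\pi^*(p)-\eta$; so I should take $u_0^{-1}(\pi^*(p)-\eta)$ (up to the sign conventions in \eqref{eq:toric manifold as quotient} and \eqref{eq:moment map}, which I would pin down carefully). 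One has to check that $\pi^*(p)-\eta$ lies in $(\R^m)^+$: this is exactly the statement that $\langle \pi_\R(e_j), p\rangle \geq \eta_j$, i.e. $\langle \tilde v_j, p\rangle\geq\eta_j$, which holds precisely because $p\in P$ and $P$ is cut out by those inequalities \eqref{eq:simple polytope}. One also checks $u_0^{-1}(\pi^*(p)-\eta)\in (i^*\circ u)^{-1}(i^*(-\eta))$, which is immediate from $i^*\pi^*=0$.

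The next step is to show $\gamma_0$ is continuous and surjective — surjectivity because $P$ is the orbit space of the $T^k$-action on $N$ and $\gamma_0$ hits every orbit — and then to identify the fibers. This is the place where I need the analogue of Lemma~\ref{lemma:stabilizer}: the stabilizer in $T^k$ of $[u_0^{-1}(\pi^*(p)-\eta)]\in N$ is exactly $\tilde\Delta_p = S^{\tilde\lambda}_p$. The proof is the same bookkeeping argument as in Lemma~\ref{lemma:stabilizer}: assuming the facets through $p$ are $\tilde F_1,\dots,\tilde F_j$, the vanishing coordinates of $\pi^*(p)-\eta$ are exactly the first $j$, so an element of $T^m$ acting by the standard action fixes the class iff it lies in the subtorus generated by $e_1,\dots,e_j$ modulo $K$; pushing forward by $\pi_T$ gives the subgroup generated by $\tilde v_1,\dots,\tilde v_j$, which is $S^{\tilde\lambda}_p$. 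Here the Delzant/smoothness hypothesis on $P$ is used to guarantee $K\cap \Omega_p^{\text{(local)}}$ behaves well and the stabilizer computation gives a closed subgroup of the expected dimension — the good-cone condition of Definition~\ref{def:good cone} is replaced by the smoothness of the Delzant polytope, which plays the identical role. Given this, $\gamma_0$ descends to a continuous $T^k$-equivariant bijection $\gamma\colon (T^k\times P)/\tilde\Delta = P^{\tilde\lambda}\to N$.

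Finally I would argue that $\gamma$ is a homeomorphism. The cleanest route, exactly as in the proof of the Proposition on $C_0^\lambda$, is to observe that $\gamma$ is an open map — it is induced from $\gamma_0$, which is open because $u_0$ is a homeomorphism onto its image and the $T^m$-action exhibits $N$ locally as a product — so the continuous bijection $\gamma$ has continuous inverse; equivariance is built into the construction. I expect the only genuine subtlety, and hence the main obstacle, to be getting all the sign conventions consistent (the $i^*(-\eta)$ in \eqref{eq:toric manifold as quotient} versus the $+\eta$ in \eqref{eq:moment map}) so that $\pi^*(p)\mp\eta$ really does land in the nonnegative orthant, together with verifying that the restriction $u_0$ of $u$ is indeed a homeomorphism onto the orthant in $(\mathfrak t^m)^*$ and that its inverse composed with $\pi^*$ lands in the level set of $i^*\circ u$; everything else is a transcription of the arguments already given for the cone case, since Section~\ref{sec:symplectic orbifold} was set up with precisely this parallel in mind.
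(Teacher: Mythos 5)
Your proposal is correct and is exactly what the paper intends: the paper gives no separate argument for this proposition, stating only that it follows ``the same line'' as Proposition~\ref{prop:contact combina}, and your transcription of that proof (with the fiber condition $u(z)=\pi^*(p)-\eta$ correctly resolving the sign conventions of \eqref{eq:toric manifold as quotient} and \eqref{eq:moment map}, and the stabilizer computation replacing Lemma~\ref{lemma:stabilizer}) is precisely that line of reasoning.
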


The following diagram was used in \cite{DJ} to compute
$H^{*}(N;\Z)=H^{*}((T^k\times P)/\tilde{\Delta};\Z)$.

\begin{equation}\label{eq:diagram sym orb} \xymatrix{
N\ar[r]^(0.4){f_1} & ET^k\times_{T^k}N & ET^m\times_{T^m}((T^m\times
P)/\Omega) \ar[l]_(0.6){f_2} \ar[r]^(0.6){f_3} &
ET^m\times_{T^m}pt}.
\end{equation}

In this diagram, $ET^m$ is taken to be $EK\times ET^k$. Since $K$ is
connected, there is a map $r: T^m\rightarrow K$, such that $r|_{K}$
is identity map. A group element  $t_m \in T^m$  acts on
$ET^m=EK\times ET^k$ via the diagonal action of $(r(t_m),\pi(t_m))$.

The map $f_1$ is inclusion of fiber, $f_3$ is the projection onto
the first factor. The map $f_2$ is given by $[(x,y),[t_m, p]]\mapsto
[y,[\pi(t_m),p]]$, where $x\in EK, y\in ET^k, t_m\in T^m, p\in P$.

\begin{theorem}\cite{DJ}\label{thm:symp orbi equi cohomology}
Taking cohomology of diagram~\eqref{eq:diagram sym orb} allows us to
compute the equivariant cohomology of the symplectic toric manifold
\begin{equation}
H_{T^k}^{*}(N;\Z)=H^{*}(ET^k\times_{T^k}(T^k\times
P)/\tilde{\Delta});\Z)= \Z[x_{1},x_{2},...,x_{m}]/\I\ ,
\end{equation}
where $x_{i}\in H_{T^k}^{2}(N;\Z)$, $1\leq i\leq m$, are images of
generators of $H^{*}(ET^m\times_{T^m}pt, \Z)$. And the ideal $\I$
was defined in Definition~\ref{def:I}.
\end{theorem}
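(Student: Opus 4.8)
The plan is to mimic the Davis–Januszkiewicz argument in \cite[Sec.\ 4]{DJ} applied to diagram~\eqref{eq:diagram sym orb}, reducing the computation of $H^{*}_{T^k}(N;\Z)$ to the already-understood cohomology of $ET^m\times_{T^m}((T^m\times P)/\Omega)=P^{\mu}$. The key point is that, because $K$ is connected and the retraction $r:T^m\to K$ exists (exactly as in Lemma~\ref{lemma:torus retraction}, but now with $K$ itself rather than $K_0$), the middle space in \eqref{eq:diagram sym orb} is a fiber bundle over $ET^k\times_{T^k}N$ with \emph{contractible} fiber $EK$. Indeed $f_2$ is the map $[(x,y),[t_m,p]]\mapsto [y,[\pi(t_m),p]]$, and the fiber over a point of $ET^k\times_{T^k}N$ is of the form $(EK\times K)/K\cong EK$ via $[x,k]\mapsto r(k)^{-1}x$, precisely the computation carried out in Lemma~\ref{lemma:EK0 bundle}. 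Hence $f_2$ induces an isomorphism $H^{*}(ET^k\times_{T^k}N;\Z)\cong H^{*}(P^{\mu};\Z)$.

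Next I would invoke the Davis–Januszkiewicz computation of $H^{*}_{T^m}(P^{\mu};\Z)$: by \cite[Thm.\ 4.8]{DJ}, $H^{*}(ET^m\times_{T^m}((T^m\times P)/\Omega);\Z)$ is the Stanley–Reisner ring $\Z[x_1,\dots,x_m]/\I$, where $\I$ is the ideal of Definition~\ref{def:I} and the $x_i\in H^2$ are the images of the polynomial generators of $H^{*}(BT^m;\Z)=H^{*}(ET^m\times_{T^m}pt;\Z)$ under $f_3^{*}$ (with $f_3$ the projection onto the first factor). Composing with the isomorphism $(f_2^{*})^{-1}$ from the previous paragraph transports this presentation to $H^{*}_{T^k}(N;\Z)$, and identifies the generators $x_i\in H^2_{T^k}(N;\Z)$ as claimed in the statement. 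Since $f_2^{*}$ and $f_3^{*}$ are ring maps and $f_2^{*}$ is an isomorphism, the ring structure is transported faithfully, so no separate verification of multiplicativity is needed. Finally, using Proposition~\ref{prop: DJ construction is OK for sym orb}, $N$ may be replaced by the geometric symplectic toric manifold, giving the statement for $H^{*}_{T^k}$ of that manifold.

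The only real content beyond bookkeeping is the fiber-bundle claim for $f_2$, i.e.\ that the fiber is genuinely $(EK\times K)/K$ and not something larger: this requires knowing that the set of $[t_m,p]\in P^{\tilde\lambda}$ lying over a fixed point of $N$ is homeomorphic to $K$, which is the analogue of Lemma~\ref{lemma:fiberK} for the characteristic map $\tilde\lambda$ — and here it uses that $P$ is Delzant, so that $K\cap\tilde\Omega_p=1$ and the $K$-action on $P^{\mu}$ is free. So the main (very mild) obstacle is simply checking that the Delzant hypothesis on $P$ gives the local triviality needed for $f_2$; everything else is a formal consequence of \cite{DJ}. (Strictly speaking this theorem is stated as due to \cite{DJ}, so the "proof" here is really just the assembly of diagram~\eqref{eq:diagram sym orb}; I would present it at that level of detail rather than re-deriving \cite[Thm.\ 4.8]{DJ}.)
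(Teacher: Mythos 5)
Your proposal is correct and is essentially the paper's intended argument: the paper states this result as a citation of \cite{DJ} after setting up exactly diagram~\eqref{eq:diagram sym orb}, and your proof is the Section~\ref{sec:equi contact} argument (Lemmas~\ref{lemma:fiberK}, \ref{lemma:torus retraction}, \ref{lemma:EK0 bundle}) specialized to the case where $K$ is connected, so that $EK\times ET^k$ with the twisted $T^m$-action is itself a model for $ET^m$ and no analogue of Lemma~\ref{lemma:AG Property} is needed. The points you flag (freeness of the $K$-action from the Delzant condition, i.e.\ $K\cap\Omega_p=1$, and the identification of the fiber of $f_2$ with $(EK\times K)/K\cong EK$) are exactly the checks the paper carries out in the contact setting, so there is no gap.
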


\begin{definition}
Suppose $\tilde{v_{j}}=(v_{j1},v_{j2},...,v_{jk})$, for $1\leq j\leq
m$. Let
\begin{equation}J_{i}=\sum_{j=1}^{m}v_{ji}x_{j},
\end{equation}
for $1\leq i\leq k$. We define
\begin{equation}
\J=\langle J_{1},J_{2},...,J_{k}\rangle
\end{equation}
to be the ideal in $\Q[x_{1},x_{2},...,x_{m}]$ generated by the
linear terms $\{J_{1},J_{2},...,J_{k}\}$.
\end{definition}

\begin{theorem}\cite{DJ}\label{thm:symp orbi cohomology}
Taking cohomology of diagram~\eqref{eq:diagram sym orb} allows us to
compute the singular cohomology of symplectic toric manifold.
\begin{equation}
H^{*}(N;\Z)=\Z[x_{1},x_{2},...,x_{m}]/\langle \I,\J\rangle ,
\end{equation}
where $x_{i}\in H^{2}(N;\Z)$, for $1\leq i\leq m$.
\end{theorem}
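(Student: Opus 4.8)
The plan is to combine the equivariant computation of Theorem~\ref{thm:symp orbi equi cohomology} with the Serre spectral sequence (or, equivalently, the Eilenberg--Moore argument used in \cite{DJ}) of the fibration $N \hookrightarrow ET^k\times_{T^k}N \xrightarrow{} BT^k$. The starting point is that, by Theorem~\ref{thm:symp orbi equi cohomology}, the map $f_3^*$ makes $H^*_{T^k}(N;\Z)\cong \Z[x_1,\dots,x_m]/\I$ a module over $H^*(BT^k;\Z)=H^*(ET^m\times_{T^m}pt;\Z)$ pulled back along $BT^m\to BT^k$; concretely, the generators of $H^2(BT^k;\Z)$ map to exactly the linear forms $J_1,\dots,J_k$, since $\pi_\Z\colon\Z^m\to\Z^k$ sends $e_j\mapsto \tilde v_j=(v_{j1},\dots,v_{jk})$ and dualizing sends the $i$-th coordinate generator of $H^2(BT^k)$ to $\sum_j v_{ji}x_j=J_i$. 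Thus $\J$ is precisely the ideal generated by the image of the positive-degree part of $H^*(BT^k;\Z)$.

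First I would verify that $N$ is equivariantly formal over $\Z$, i.e.\ that the Serre spectral sequence of $N\hookrightarrow ET^k\times_{T^k}N\to BT^k$ collapses at $E_2$ and $H^*_{T^k}(N;\Z)$ is a free module over $H^*(BT^k;\Z)$. For a symplectic (or quasitoric) toric manifold this is standard: the even Betti numbers are the $h$-vector of the polytope and the odd cohomology vanishes, so $H^*(N;\Z)$ is concentrated in even degrees and free, forcing collapse; alternatively one reads it off directly from the Davis--Januszkiewicz description. Granting collapse, the edge homomorphism gives a ring isomorphism
\[
H^*(N;\Z)\;\cong\; H^*_{T^k}(N;\Z)\otimes_{H^*(BT^k;\Z)}\Z
\;=\;\bigl(\Z[x_1,\dots,x_m]/\I\bigr)\big/\bigl(J_1,\dots,J_k\bigr)
\;=\;\Z[x_1,\dots,x_m]/\langle\I,\J\rangle,
\]
where $\Z$ is the trivial $H^*(BT^k;\Z)$-module. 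The classes $x_i\in H^2(N;\Z)$ named in the statement are the images of the equivariant classes $x_i$ under the restriction $f_1^*\colon H^*_{T^k}(N;\Z)\to H^*(N;\Z)$.

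The one point that needs care — and which I expect to be the main obstacle — is that passing from the equivariant ring to the ordinary ring by killing the image of $H^{>0}(BT^k)$ is only automatic \emph{over $\Z$} when there is no Tor obstruction, i.e.\ precisely when $N$ is equivariantly formal; the collapse-at-$E_2$ claim is what makes the naive quotient correct integrally rather than merely rationally. I would therefore present the equivariant formality explicitly, either by citing the Betti number computation of \cite{DJ} for quasitoric manifolds (even cohomology, torsion-free, with Poincaré polynomial the $h$-polynomial), or by running the Eilenberg--Moore/Serre spectral sequence degree by degree and observing for dimension reasons that all higher differentials out of the fiber row must vanish since both the fiber cohomology and $H^*(BT^k)$ are concentrated in even degrees. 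Once formality is in hand, the identification of the kernel ideal with $\J$ is the bookkeeping already carried out above (the generators of $H^2(BT^k)$ hit the $J_i$), and multiplicativity of the edge homomorphism gives that the isomorphism is one of rings, completing the proof. Finally, since the statement is about the singular cohomology of $N$, I would remark that by Proposition~\ref{prop: DJ construction is OK for sym orb} we may freely work with $P^{\tilde\lambda}$ in place of $N$, so no smoothness or symplectic input beyond Delzant's construction is actually used here.
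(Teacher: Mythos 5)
Your argument is correct and is essentially the Davis--Januszkiewicz proof that the paper itself merely cites for this theorem (the paper gives no independent proof beyond setting up diagram~\eqref{eq:diagram sym orb}): equivariant formality (even, torsion-free $H^{*}(N;\Z)$ forcing collapse of the Serre spectral sequence and freeness of $H^{*}_{T^k}(N;\Z)$ over $H^{*}(BT^k;\Z)$), together with the identification of the pullbacks of the degree-two generators of $H^{*}(BT^k;\Z)$ with the linear forms $J_{1},\dots,J_{k}$, gives $H^{*}(N;\Z)\cong H^{*}_{T^k}(N;\Z)/\langle J_{1},\dots,J_{k}\rangle = \Z[x_{1},\dots,x_{m}]/\langle\I,\J\rangle$ as rings. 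One caution: your ``alternative'' justification of the collapse (fiber and base both even) presupposes that $H^{*}(N;\Z)$ is concentrated in even degrees, which is precisely the point requiring the independent cell-structure/Betti-number input from \cite{DJ}, so the first route (citing that computation) is the one to rely on.
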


If we denote by $T^r$ the subgroup of $T^k=(S^1)^k$ that is the last
$r$ copies of $S^1$, then the $T^k$ action on $N$ restricts to a
$T^r$ action on $N$. We may compute the $T^r$-equivariant cohomology
of $N$ as an easy corollary of the Theorem~\ref{thm:symp orbi equi
cohomology} and Theorem~\ref{thm:symp orbi cohomology}.

\begin{corollary}
$H_{T^r}^{*}(N;\Z)\simeq \Z[x_{1},x_{2},...,x_{m}]/\langle\I,
J_{1},J_{2},...,J_{k-r}\rangle$
\end{corollary}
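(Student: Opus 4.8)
The plan is to interpolate between the two theorems of Davis--Januszkiewicz (Theorem~\ref{thm:symp orbi equi cohomology} and Theorem~\ref{thm:symp orbi cohomology}) by running the same spectral-sequence / fibration argument for the intermediate torus $T^r\leq T^k$. The key observation is that the splitting $T^k=(S^1)^{k-r}\times T^r$ induces a splitting $ET^k = ET^{k-r}\times ET^r$, and hence a fibration
\begin{equation*}
ET^{k-r}\times_{T^{k-r}}\bigl(ET^r\times_{T^r}N\bigr)\;\longrightarrow\; ET^{k-r}\times_{T^{k-r}}(\text{pt})=BT^{k-r},
\end{equation*}
whose total space is $ET^k\times_{T^k}N$ and whose fiber is $ET^r\times_{T^r}N$. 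First I would set up this fibration and take the associated Serre spectral sequence. Since everything in sight has cohomology concentrated in even degrees (by Theorem~\ref{thm:symp orbi equi cohomology} for the total space, and $H^*(BT^{k-r};\Z)=\Z[x'_1,\dots,x'_{k-r}]$ for the base), the spectral sequence collapses at $E_2$, and one gets that $H^*_{T^k}(N;\Z)$ is a free module over $H^*(BT^{k-r};\Z)$ with $H^*_{T^r}(N;\Z)$ as the quotient by the ideal generated by the images of the $x'_j$.

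The second step is to identify which generators of $H^*_{T^k}(N;\Z)=\Z[x_1,\dots,x_m]/\I$ are the images under the classifying map of the polynomial generators of $H^*(BT^{k-r};\Z)$. Under the composite $BT^{k-r}\hookrightarrow BT^k \xrightarrow{\;B\pi_T\;} BT^m$ read off from the Delzant construction --- equivalently, chasing diagram~\eqref{eq:diagram sym orb} restricted to the subtorus --- the $j$-th generator of $H^*(BT^{k-r};\Z)$ pulls back to $\sum_{i=1}^m v_{ij}x_i = J_j$ for $1\leq j\leq k-r$. This is exactly the same computation that produces the ideal $\J=\langle J_1,\dots,J_k\rangle$ in Theorem~\ref{thm:symp orbi cohomology}; the only difference is that now we only kill the first $k-r$ of the linear forms, because only $k-r$ of the circle factors of $T^k$ have been made part of the "Borel" directions while the remaining $T^r$ acts genuinely equivariantly. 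Combining with the collapse of the spectral sequence gives
\begin{equation*}
H^*_{T^r}(N;\Z)\;\simeq\;\Z[x_1,\dots,x_m]/\langle\I,J_1,\dots,J_{k-r}\rangle.
\end{equation*}

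The main obstacle is making the bookkeeping in the second step precise --- namely verifying that the generator of $H^2(BT^{k-r};\Z)$ coming from the $j$-th circle really maps to $J_j$ and not to some other linear combination, which amounts to correctly tracking the dual map $\pi^*$ and the convention (fixed in Section~\ref{sec:introduction}) that the $x_i$ are first Chern classes with weight $-e_i$. One must also check that the restriction to $T^r$ of the map $r:T^m\to K$ used in diagram~\eqref{eq:diagram sym orb} is compatible with the splitting, so that the fibration above is the one obtained by restricting the Borel construction; this is routine once the $ET^k=ET^{k-r}\times ET^r$ model is fixed. I do not expect any difficulty with the collapse of the spectral sequence, since the even-degree concentration is inherited directly from Theorems~\ref{thm:symp orbi equi cohomology} and~\ref{thm:symp orbi cohomology}. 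Alternatively --- and this may be the cleanest write-up --- one can avoid spectral sequences entirely: the splitting $T^k=(S^1)^{k-r}\times T^r$ exhibits $ET^k\times_{T^k}N$ as $E(S^1)^{k-r}\times_{(S^1)^{k-r}}\bigl(ET^r\times_{T^r}N\bigr)$, and since $H^*_{T^r}(N;\Z)$ is free over $\Z$ (again by even concentration), the Leray--Hirsch theorem identifies $H^*_{T^k}(N;\Z)$ as $H^*(BT^{k-r};\Z)\otimes_\Z H^*_{T^r}(N;\Z)$ as a module; quotienting by the augmentation ideal of $H^*(B(S^1)^{k-r};\Z)$, which corresponds to setting $J_1=\cdots=J_{k-r}=0$, then yields the result.
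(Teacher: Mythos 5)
Your overall route is the same as the paper's: the paper also applies the Davis--Januszkiewicz Theorem 4.14 argument to the bundle $ET^r\times_{T^r}N \hookrightarrow ET^{k-r}\times_{T^{k-r}}(ET^r\times_{T^r}N)=ET^k\times_{T^k}N \rightarrow BT^{k-r}$, and your identification of the images of the degree-two generators of $H^{*}(BT^{k-r};\Z)$ with $J_{1},\dots,J_{k-r}$ is exactly what that argument delivers. The gap is in how you justify the degeneration. You infer collapse at $E_{2}$ (and, in your Leray--Hirsch variant, the needed freeness, where you write ``again by even concentration'') from the fact that the \emph{total space} and the \emph{base} have cohomology only in even degrees. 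That inference is not valid: the degree-parity argument for collapse of the Serre spectral sequence needs the \emph{fiber} and the base to be even, and evenness of the fiber $ET^r\times_{T^r}N$, i.e.\ of $H^{*}_{T^r}(N;\Z)$, is precisely what has not yet been established --- Theorem~\ref{thm:symp orbi equi cohomology} and Theorem~\ref{thm:symp orbi cohomology} concern $H^{*}_{T^k}(N;\Z)$ and $H^{*}(N;\Z)$, not the intermediate equivariant cohomology. The bundle $S^{1}\hookrightarrow ES^{1}\rightarrow BS^{1}$ shows that an even base and even total space are perfectly compatible with a fiber carrying odd cohomology and a non-degenerate spectral sequence, so a priori odd classes in $H^{*}_{T^r}(N;\Z)$ could be created and then killed by differentials; this must be ruled out before your quotient description of the fiber cohomology can be extracted.

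The missing step is easy and is exactly the first half of the paper's proof: first run the spectral sequence of $N\hookrightarrow ET^r\times_{T^r}N\rightarrow BT^r$. Since both $N$ (by Theorem~\ref{thm:symp orbi cohomology}) and $BT^r$ have cohomology only in even degrees, this sequence degenerates at $E_{2}$, so $H^{*}_{T^r}(N;\Z)$ is concentrated in even degrees (and is torsion-free). With that in hand, your second fibration argument --- in either the spectral-sequence or the Leray--Hirsch formulation --- goes through and coincides with the argument of the paper, which at that point simply invokes the argument of Theorem 4.14 in \cite{DJ}.
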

\begin{proof}
The spectral sequence of the singular cohomology with $\Z$
coefficients of the fiber bundle
\begin{equation*}
N\hookrightarrow ET^r\times_{T^r}N\rightarrow BT^r
\end{equation*}
degenerates at the $E^{2}$ term, since both $N$ and $BT^r$ have
cohomology only in even degrees. Thus $ET^r\times_{T^r} N$ also has
cohomology only in even degrees.

Denote by $T^{k-r}$ the subgroup of $T^k$ that is the product of the
first $k-r$ copies of $S^1$ in $T^k= (S^1)^k$.

We complete the proof by applying the argument in Theorem 4.14 in
\cite{DJ} to the fiber bundle
\begin{equation*}
ET^r\times_{T^r}N \hookrightarrow
ET^{k-r}\times_{T^{k-r}}(ET^r\times_{T^r}N)\rightarrow BT^{k-r}
\end{equation*}
and also noticing that
$ET^{k-r}\times_{T^{k-r}}(ET^r\times_{T^r}N)=ET^k\times_{T^k}N$.
\end{proof}

\section{\bf The singular cohomology of good contact toric manifolds}\label{sec:cohomology contact}
Given a strictly convex good cone
\begin{equation}
 C= \bigcap_{i=1}^{m}\{x\in \R^n: \langle x, v_{i}\rangle \geq 0\},
\end{equation}
there is a good contact toric manifold $M$ associated to it. As we
proved in Proposition~\ref{prop:contact combina}, $M$ is
$T^n$-equivariantly homeomorphic to $P^{\lambda}$, which was defined
in Example~\ref{ex:contact combina}.

To make computations easier, we first move
$C_{0}=C\backslash\{\vec{0}\}$ into the upper half space $U\R^n=
\{(x_{1},x_{2},...,x_{n})\in \R^n:x_{n}>0\}$ using a transformation
in $SL(n;\Z)$, where $SL(n;\Z)$ is naturally included into
$SL(n;\R)$ as a subgroup. We will show in
Proposition~\ref{prop:moving to upper half} that we can always do
this. This will not change the homeomorphism type of the good
contact toric manifolds associated to the cone.

\begin{remark}
The good contact toric manifolds associated to two cones that differ
by a transformation in $SL(n;\Z)$ are contactomorphic. This fact was
not stated in \cite{L:contact toric}, but it easily follows from the
classification theorem there.
\end{remark}

\begin{lemma}\label{lemma:equiva of Assu A}
The following conditions are equivalent: \begin{enumerate}
\item\label{item: def of A} $C_{0}\subset U\R^n$,where $U\R^n$ stands for the `upper half space': $\{(x_{1},x_{2},...,x_{n})\in
\R^n:x_{n}>0\}.$
\item\label{item: dual cone} $(0,0,...,0,1)\in (C^{\vee})^{\circ}$, where $C^{\vee}=\{y\in \R^n: \langle y, x\rangle \geq 0, \forall x\in
C\}$, and $(C^{\vee})^{\circ}$ is its interior $\{y\in \R^n: \langle
y, x\rangle > 0, \forall x\in C_{0}\}$.
\item\label{item:linear combi} $(0,0,...,0,1)$ can be expressed as a linear combination of $\{v_{i},1\leq i\leq
m\}$ with positive coefficients.
\end{enumerate}
\end{lemma}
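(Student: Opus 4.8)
The plan is to prove the three conditions are equivalent by establishing the cycle of implications \eqref{item: def of A} $\Rightarrow$ \eqref{item: dual cone} $\Rightarrow$ \eqref{item:linear combi} $\Rightarrow$ \eqref{item: def of A}, all of which are elementary facts about polyhedral cones and their duals. The statement is really a packaging of standard convex geometry; the only thing to be careful about is keeping track of the distinction between the cone $C$ and the punctured cone $C_0 = C \setminus \{0\}$, and between the dual cone $C^\vee$ and its interior.

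For \eqref{item: def of A} $\Rightarrow$ \eqref{item: dual cone}, I would argue directly from the definition of the interior of the dual cone. Since $C_0 \subset U\R^n$, every $x = (x_1,\dots,x_n) \in C_0$ has $x_n > 0$, which says precisely that $\langle (0,\dots,0,1), x\rangle > 0$ for all $x \in C_0$; by the description of $(C^\vee)^\circ$ given in the statement of \eqref{item: dual cone}, this is exactly the assertion that $(0,\dots,0,1) \in (C^\vee)^\circ$. (One should note that $C$ is strictly convex, so $(C^\vee)^\circ$ is genuinely nonempty and this interior description is the correct one; this is implicit in the hypotheses on $C$.) The reverse implication \eqref{item:linear combi} $\Rightarrow$ \eqref{item: def of A} is equally direct: if $(0,\dots,0,1) = \sum_i a_i v_i$ with all $a_i > 0$, then for any $x \in C_0$ we have $x_n = \langle (0,\dots,0,1), x\rangle = \sum_i a_i \langle v_i, x\rangle \geq 0$, and it cannot be zero, since $x_n = 0$ would force $\langle v_i, x\rangle = 0$ for every $i$ with $a_i > 0$, i.e. for every $i$, which would put $x$ in the lineality space of $C$; strict convexity of $C$ means the lineality space is $\{0\}$, contradicting $x \in C_0$. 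Hence $x_n > 0$.

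The only implication requiring a genuine tool is \eqref{item: dual cone} $\Rightarrow$ \eqref{item:linear combi}, and the main (mild) obstacle is to invoke the right duality fact: since $C = \bigcap_{i=1}^m \{x : \langle x, v_i\rangle \geq 0\}$ is a finitely generated (polyhedral) cone, its dual is $C^\vee = \mathrm{cone}(v_1,\dots,v_m)$, the set of nonnegative combinations of the $v_i$ — this is the Farkas/Minkowski–Weyl duality for polyhedral cones. A point lies in the \emph{interior} of $C^\vee = \mathrm{cone}(v_1,\dots,v_m)$ exactly when it is a \emph{strictly positive} combination of the $v_i$; more carefully, a relative-interior point of $\mathrm{cone}(v_1,\dots,v_m)$ is a positive combination of the $v_i$, and since $C$ is strictly convex the $v_i$ span $\R^n$, so the cone $C^\vee$ is full-dimensional and its relative interior is its topological interior. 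Thus $(0,\dots,0,1) \in (C^\vee)^\circ$ gives the desired expression with all coefficients positive.

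I would present this as three short paragraphs, one per implication, stating the Minkowski–Weyl description $C^\vee = \mathrm{cone}(v_1,\dots,v_m)$ as a cited or standard fact and then just reading off the interior condition. No step is hard; the thing to get right is the bookkeeping around strict convexity (to rule out the lineality space and to know $C^\vee$ is full-dimensional) and the correspondence between ``interior of a simplicial-type cone'' and ``all coefficients strictly positive''.
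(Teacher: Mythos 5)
Your proposal is correct and follows essentially the same route as the paper: the same cycle \eqref{item: def of A}$\Rightarrow$\eqref{item: dual cone}$\Rightarrow$\eqref{item:linear combi}$\Rightarrow$\eqref{item: def of A}, with \eqref{item: dual cone}$\Rightarrow$\eqref{item:linear combi} resting on the fact that $C^{\vee}$ is the cone generated by $v_{1},\dots,v_{m}$ so interior points are positive combinations, and \eqref{item:linear combi}$\Rightarrow$\eqref{item: def of A} using strict convexity to rule out $x_{n}=0$. You merely make explicit two points the paper leaves implicit (why strict convexity kills the lineality space, and why $C^{\vee}$ is full-dimensional so interior equals relative interior), which is fine.
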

\begin{proof}
Denote $(0,0,...,0,1)$ by $\vec{a}$.

(\ref{item: def of A})$\Rightarrow$(\ref{item: dual cone}): For any
$x\in C_{0}, \langle \vec{a}, x\rangle =x_{n}
> 0$.

(\ref{item: dual cone})$\Rightarrow$(\ref{item:linear combi}): The
dual cone $C^{\vee}$ is spanned by rays along
$v_{1},v_{2},...,v_{m}$. So any vector in the interior of it can be
expressed as linear combination of $\{v_{i},1\leq i\leq m\}$ with
positive coefficients.

(\ref{item:linear combi})$\Rightarrow$(\ref{item: def of A}): For
any $x\in C$, we have $\langle x, v_{i}\rangle \geq 0$, for $1\leq
i\leq m$, with all equalities if and only if $x=\vec{0}$. Suppose
\begin{equation*}
\vec{a}=\sum_{i=1}^{m}k_{i}v_{i}, \mbox{with}\ k_{i}>0\ \forall
i.\end{equation*} Then \begin{equation*}x_{n}=\langle x,
\vec{a}\rangle= \sum_{i=1}^{m}k_{i}\langle x, v_{i}\rangle \geq
0.\end{equation*} The equality holds if and only if $x=\vec{0}$. So
for any $x\in C_{0}, x_{n}>0$, i.e., $C_{0}\subset U\R^n$.
\end{proof}

\begin{proposition}\label{prop:moving to upper half}
There is an element $B$ in $SL(n;\Z)$ so that $B(C_{0})$ is in
$U\R^n$.
\end{proposition}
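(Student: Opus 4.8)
The plan is to show that the cone $C$ is strictly convex (which it is, by hypothesis, since $M$ is a good contact toric manifold) and use this to place $(0,\dots,0,1)$ in the interior of the dual cone after an $SL(n;\Z)$-change of coordinates. By Lemma~\ref{lemma:equiva of Assu A}, it suffices to find $B\in SL(n;\Z)$ so that $(0,\dots,0,1)$ lies in the interior of the dual cone of $B(C)$, equivalently so that $B^{-T}(0,\dots,0,1)$ lies in the interior of $C^{\vee}$. So the real content is: the interior of $C^{\vee}$ contains a primitive integral vector that can be completed to a $\Z$-basis of $\Z^n$, and then $B$ is (the transpose inverse of) the matrix sending the last standard basis vector to that vector.

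First I would observe that, because $C$ is strictly convex, its dual cone $C^{\vee}$ is $n$-dimensional (full-dimensional), hence its interior $(C^{\vee})^{\circ}$ is a nonempty open convex cone. Since $C^{\vee}$ is rational (it is spanned by the integral rays $\R_{\geq 0}v_i$), the interior contains rational points, and by scaling it contains an integral point; dividing by the gcd of its coordinates we get a \emph{primitive} integral vector $w\in (C^{\vee})^{\circ}\cap \Z^n$. The key step is then the standard fact that a primitive vector in $\Z^n$ is part of a $\Z$-basis of $\Z^n$: there is a matrix $A\in GL(n;\Z)$ with $A e_n = w$, and after possibly negating one other column we may take $\det A = 1$, so $A\in SL(n;\Z)$. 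Set $B = A^{T}$. Then for any $x\in C_0$ we have $\langle B(x), e_n\rangle = \langle x, B^{T} e_n\rangle = \langle x, A e_n\rangle = \langle x, w\rangle > 0$ since $w\in (C^{\vee})^{\circ}$, so $B(C_0)\subset U\R^n$. (One should double-check the transpose/inverse bookkeeping against the exact convention used; replacing $B$ by $(B^{T})^{-1}$ if needed does not affect membership in $SL(n;\Z)$.)

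The main obstacle—really the only nontrivial point—is ensuring the interior vector can be chosen primitive; this is immediate once we know $(C^{\vee})^{\circ}\neq\emptyset$, which in turn is exactly the strict convexity of $C$ (a cone is strictly convex, i.e.\ contains no line, if and only if its dual is full-dimensional). A reader-friendly way to record this: strict convexity of $C$ means $C$ contains no nonzero linear subspace; if $C^{\vee}$ were contained in a hyperplane $\{y:\langle y,z\rangle = 0\}$ for some $z\neq 0$, then $\pm z$ would both lie in $C = (C^{\vee})^{\vee}$, contradicting strict convexity. Hence $(C^{\vee})^{\circ}$ is a nonempty open rational cone, it meets $\Z^n$, and the argument above goes through. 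Everything else is the elementary linear algebra of completing a primitive vector to a basis of $\Z^n$ and adjusting the determinant to be $+1$.
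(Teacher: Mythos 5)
Your proof is correct and takes essentially the same route as the paper: produce a primitive integral vector in the strict interior of the dual cone, send it to $e_{n}$ by a matrix in $SL(n;\Z)$ (completing a primitive vector to a $\Z$-basis), and act on the cone by the transpose, concluding via Lemma~\ref{lemma:equiva of Assu A}. The only difference is that the paper exhibits the interior vector explicitly as the primitive vector in the direction of $\sum_{i=1}^{m}v_{i}$, which satisfies item (3) of that lemma by construction, whereas you obtain it abstractly from strict convexity of $C$ (nonemptiness of $(C^{\vee})^{\circ}$ plus density of rational points); both are fine.
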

\begin{proof}
Let $v=\sum_{i=1}^{m}v_{i}$ and $u$ be the primitive vector in the
direction of $v$. Suppose
\begin{equation*}
u=\frac{1}{k}v, \mbox{for}\ k\in \Z_{>0}.
\end{equation*}
Since $u$ is primitive, there exists $D\in SL(n,\Z)$, such that,
$D(u)=(0,0,...,0,1)$.

Define a transformation $B$ of $\R^n$ by:
\begin{equation}
x\mapsto Bx= D^{T}x ,
\end{equation}
where the matrix for $D^{T}$ is the transpose of the matrix for $D$.
Since $det D^{T}=det D=1$, we still have $B\in SL(n;\Z)$.

Under this transformation $B$, the cone $C = \bigcup_{i=1}^{m}\{x\in
\R^n: \langle x, v_{i}\rangle \geq 0\}$ is taken to
\begin{equation}B(C)= \bigcup_{i=1}^{m}\{x\in \R^n: \langle x,
D(v_{i})\rangle \geq 0\}.\end{equation}

So the normal vectors to the facets of the new cone $B(C)$ are
$\{D(v_{i}):1\leq i\leq m\}$. Moreover,
\begin{equation}
(0,0,...,0,1)=D(u)=\sum_{i=1}^{m}\frac{1}{k}D(v_{i}).
\end{equation}

Finally, using Lemma~\ref{lemma:equiva of Assu A}, we see that the
new cone $B(C_{0})$ is in $U\R^n$.
\end{proof}

So without loss of generality, we may now assume that $C_{0}\subset
U\R^n$. By intersecting $C$ with the hyperplane:
\begin{equation}
H= \{(x_{1},...,x_{n})\in \R^n: x_{n}=1\},
\end{equation}
we get a polytope, which we will denote by $P$.

The intersection of facet $F_{i}$ with $H$ is given by
\[\tilde{F_{i}}:=\left\{(x_{1},...,x_{n-1},1)\in \R^n:\sum_{j=1}^{n-1}x_{j}v_{ij}+v_{in} \geq 0\right\}.\]
Let $\F= \{\tilde{F_{i}}:1\leq i\leq m\}$.

 If we identify $H$ with $\R^{n-1}$ via
\[(x_{1},...,x_{n-1},1)\mapsto (x_{1},...,x_{n-1}),\]
 then $P$ can be thought
of as a polytope in $\R^{n-1}$ given by
\begin{equation}
 \{x\in
\R^{n-1}: \langle x,\tilde{v_{i}}\rangle \geq -v_{in}\},
\end{equation}
where $\tilde{v_{i}}=(v_{i1},...,v_{i,n-1})$ is normal to
$\tilde{F_{i}}$ in $H=\R^{n-1}$.

To compute the singular cohomology ring of $M$ with integer
coefficients, we need to add the following assumption.

 {\bf Smoothness Criterion}: The polytope $P$ is a Delzant polytope in
 $\R^{n-1}$ and $\tilde{v_{i}}$ is a primitive vector in $\Z^{n-1}$.

 \begin{remark}
This assumption allows us to stay in the smooth category in the
following discussion. In general, $P$ is just a simple rational
convex polytope, not necessarily smooth, so the
 results in this section do not hold in full generality. However, if
 we only care about rational cohomology, the argument and theorems in
 this section will still be valid in general. In that case, we need
 to use toric orbifolds instead of manifolds, and $S^1$ orbi-bundles instead of ``honest'' $S^1$ bundles.
 \end{remark}

Recall that there is a symplectic toric manifold $N$ associated with
$P$ which is equal to $P^{\tilde{\lambda}}$, where
$P^{\tilde{\lambda}}$ was defined in Example~\ref{ex:sym orb
combinatorics}, where we let $k=n-1$ and $\eta_{i}=-v_{in}$. The
symbol $\tilde{\Delta}$ will still be used here just as in
Example~\ref{ex:sym orb combinatorics}.

\begin{proposition}\label{S1 bundle}
With the notation as above, the good contact toric manifold $M$ is a
principal $S^{1}$ bundle over $N$. The projection map
\[d: M=(T^n\times P)/\Delta \rightarrow  N=(T^{n-1}\times P)/\widetilde{\Delta}\]
is given by
\[[t,p]\mapsto [\tilde{t},p],\]
where $t=(e^{i\theta_{1}},e^{i\theta_{2}},...,e^{i\theta_{n}})\in
T^n$, and
$\tilde{t}=(e^{i\theta_{1}},e^{i\theta_{2}},...,e^{i\theta_{n-1}})\in
T^{n-1}$ is just $t$ with the last coordinate dropped. The notation
$[t,p]$ and $[\tilde{t},p]$ refers to the equivalence classes of
$(t,p)$ and $(\tilde{t},p)$ respectively.
\end{proposition}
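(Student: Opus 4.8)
The plan is to show that the map $d$ is well-defined, continuous, surjective, that its fibers are the orbits of a free $S^1$-action on $M$, and that this action gives $M$ the structure of a principal $S^1$-bundle over $N$. First I would check well-definedness: if $(t,p)\Delta(t',p)$, then $p=p'$ and $t^{-1}t' \in S_p^\lambda = \Delta_p$, the subgroup of $T^n$ generated by $\{v_i : p \in \widetilde{F_i}\}$. I must verify that dropping the last coordinate sends $\Delta_p$ into $\widetilde\Delta_p = S_p^{\tilde\lambda}$, the subgroup of $T^{n-1}$ generated by $\{\tilde v_i : p \in \widetilde{F_i}\}$. Since the projection $\R^n \to \R^{n-1}$ forgetting the last coordinate sends $v_i = (v_{i1},\dots,v_{in})$ to $\tilde v_i = (v_{i1},\dots,v_{i,n-1})$, it descends to a homomorphism $T^n \to T^{n-1}$ (this is just $\pi_T$ composed with an identification, or more precisely the forgetful map) that carries the generator coming from $v_i$ to the generator coming from $\tilde v_i$; hence it maps $\Delta_p$ onto $\widetilde\Delta_p$. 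This shows $d$ is a well-defined continuous map, and it is visibly surjective since every $(\tilde t, p)$ lifts to $(t,p)$ with $t$ any preimage.

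Next I would identify the $S^1$-action. Let $S^1 \hookrightarrow T^n$ be the last coordinate circle, $\{(1,\dots,1,e^{i\theta})\}$, acting on $M = (T^n \times P)/\Delta$ by multiplication on the first factor. The key point — and here the Smoothness Criterion enters — is that this $S^1$ meets every isotropy group $\Delta_p$ only in the identity. Indeed, by goodness of $C$ the generators $v_{i_1},\dots,v_{i_l}$ of a codimension-$l$ face span a rank-$l$ direct summand of $\Z^n$; the Smoothness Criterion says that after projecting off the last coordinate, the $\tilde v_{i_1},\dots,\tilde v_{i_l}$ still span a rank-$l$ direct summand of $\Z^{n-1}$ (this is exactly the Delzant/primitivity condition on $P$). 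An element of $S^1 \cap \Delta_p$ is a point of $T^n$ of the form $\exp(\sum c_j v_{i_j})$ with only the last coordinate possibly nontrivial; the fact that $\sum c_j \tilde v_{i_j}$ must then lie in $\Z^{n-1}$ forces, by the direct-summand property, all $c_j \in \Z$, so $\exp(\sum c_j v_{i_j})$ is the identity in $T^n$. Hence the $S^1$-action on $M$ is free. Its orbits are precisely the fibers of $d$: two points $[t,p]$ and $[t',p']$ have the same image under $d$ iff $p=p'$ and $t^{-1}t'$ projects into $\widetilde\Delta_p$, i.e. $t^{-1}t' = s\cdot \delta$ for some $s \in S^1$ and some lift $\delta \in \Delta_p$ — so $[t',p'] = s \cdot [t,p]$. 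Therefore $d$ exhibits $N$ as the orbit space $M/S^1$.

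Finally I would upgrade "free $S^1$-action with quotient $N$" to "principal $S^1$-bundle." Since the circle group is compact and acts freely on the (Hausdorff, locally compact) space $M$, the quotient map $M \to M/S^1 = N$ is automatically a principal $S^1$-bundle; alternatively one can build explicit local sections over the standard charts of $N$ indexed by the vertices of $P$, on which the $\Delta$-quotient trivializes, and observe that $d$ becomes projection to a product. Either way the $S^1$-equivariance of the transition functions is immediate from the construction. The main obstacle is the freeness argument in the previous paragraph: one must carefully use the Smoothness Criterion (primitivity of $\tilde v_i$ in $\Z^{n-1}$ together with the Delzant condition on $P$) to rule out nontrivial stabilizers in the last-coordinate circle — without it the action is only locally free and $M$ is merely an $S^1$ orbi-bundle over the orbifold $N$, as flagged in the remark. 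Everything else (well-definedness, continuity, surjectivity, fiber identification) is a routine unwinding of the definitions of $\Delta$ and $\widetilde\Delta$.
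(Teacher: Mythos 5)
Your proposal is correct and follows essentially the same route as the paper: let the last-coordinate circle $S^1=\{(1,\dots,1,e^{i\theta_n})\}$ act on $M=(T^n\times P)/\Delta$ by multiplication, use the Smoothness Criterion to get $S^1\cap\Delta_p=1$ (hence freeness), and identify the quotient with $N=(T^{n-1}\times P)/\widetilde{\Delta}$ via dropping the last coordinate. You in fact supply more detail than the paper does, notably the direct-summand argument behind $S^1\cap\Delta_p=1$ and the explicit upgrade from free action to principal bundle, which the paper leaves implicit.
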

\begin{proof}
Let
\begin{equation}
S^1=\{(1,1,...,1,e^{i\theta_{n}})\in T^n: \theta_{n}\in \R\}
\end{equation}
act on $M=(T^n\times P)/\Delta$ by multiplication on $T^n$. Because
of the smoothness criterion we imposed, $S^1\cap \Delta_{p}=1$, for
any $p\in P$, and so this action is free.

The quotient space is
\begin{equation}
(T^n\times P)/\langle \Delta, S^1\rangle = (T^n\times P)/\langle
\widetilde{\Delta}, S^1 \rangle = (T^n/S^1\times
P)/\widetilde{\Delta} = (T^{n-1}\times P)/\widetilde{\Delta}.
\end{equation}
Therefore $N$ is the orbit space, and from the construction, we can
see the map from the space $M$ to $N$ just drops the last coordinate
of $t$.
\end{proof}

From this perspective, then, we get the Gysin sequence of the
$S^1$-bundle, namely the long exact sequence:
\[
\xymatrix{
   \ar[r]^{} & H^{2k-1}(N;\Z) \ar[r]^{\cup e} & H^{2k+1}(N;\Z) \ar[r]^{d^{*}} & H^{2k+1}(M;\Z) \ar[r]^{d_{*}} &
   H^{2k}(N;\Z)\ar`r[d]`[l] `[lllld]_{\cup e}`[dlll][dlll]
&\\
&H^{2k+2}(N;\Z) \ar[r]^{d^{*}} & H^{2k+2}(M;\Z) \ar[r]^{d_{*}} &
H^{2k+1}(N;\Z) \ar[r]^{\cup e}& H^{2k+3}(N;\Z) \ar[r] & }\]

The symbol $\cup e$ stands for the multiplication of the Euler class
of the $S^1$-bundle $d: M\rightarrow N$. The map $d^{*}$ is the
pull-back in cohomology rings by the fibration map $d: M\rightarrow
N$, so this is a ring homomorphism. The map $d_{*}$ is the map
commonly called {\em Gysin map}. It is not a ring homomorphism, but
it is a $H^{*}(N;\Z)$-module homomorphism in the following sense:
\begin{equation}\label{eq:push-pull}
d_{*}(d^{*}\alpha \cup \beta)= \alpha\cup d_{*}(\beta),
\end{equation}
for $\alpha\in H^{*}(N;\Z)$ and $\beta\in H^{*}(M;\Z)$.

Now the odd degree cohomology of $N$ vanishes and the long exact
sequence breaks down to short exact sequences
\begin{equation}\label{eq:ses}
\xymatrix{
  0 \ar[r]^{} & H^{2k+1}(M;\Z) \ar[r]^{d_{*}} & H^{2k}(N;\Z) \ar[r]^{\cup e} & H^{2k+2}(N;\Z) \ar[r]^{d^{*}} & H^{2k+2}(M;\Z) \ar[r]^{} & 0
  }.
\end{equation}

We will describe the chomology ring of $M$ in terms of the even part
and the odd part.
\begin{definition}\label{def:H_e}
Define the even part of $H^{*}(M;\Z)$ as
\begin{equation}H^{even}(M;\Z)=: \{\alpha\in H^{k}(M;\Z): k \ is\  an\
even\ integer\}.\end{equation}
 Define the odd part of $H^{*}(M;\Z)$ as
\begin{equation}H^{odd}(M;\Z)=: \{\alpha\in H^{k}(M;\Z): k\  is\  an\
odd\ integer\}.\end{equation}
\end{definition}

\begin{remark}\label{rmk:H_e}
It's easy to see that $H^{even}(M;\Z)$ is a subring of
$H^{*}(M;\Z)$, while $H^{odd}(M;\Z)$ is a module over
$H^{even}(M;\Z)$.
\end{remark}

Put together the exact sequences \eqref{eq:ses} of different
degrees, we get the following exact sequence:
\begin{equation}\label{eq:grouped ses}
\xymatrix{
  0 \ar[r]^{} & H^{odd}(M;\Z) \ar[r]^{d_{*}} & H^{*}(N;\Z) \ar[r]^{\cup e} & H^{*}(N;\Z)
\ar[r]^{d^{*}} & H^{even}(M;\Z) \ar[r]^{} & 0
  }.
\end{equation}
From this exact sequence, we can easily prove the following.

\begin{proposition}\label{prop:(co)ker rho}
Denote by $\rho$ the map
\[\cup e: H^{*}(N;\Z)\rightarrow H^{*}(N;\Z).\]
Then
\begin{equation}\label{eq:H_e}
H^{even}(M;\Z)\simeq coker\rho = H^{*}(N;\Z)/\langle e\rangle,
\end{equation}
where $\langle e\rangle$ stands for the ideal of $H^{*}(N;\Z)$
generated by the Euler class $e$. This is a ring isomorphism, and it
preserves the degree of the cohomology classes since it is induced
by the ring map $d^{*}$.

 Moreover,
\begin{equation}\label{eq:H_o}
H^{odd}(M;\Z)\simeq  ker\rho = Ann(e),
\end{equation}
where $Ann(e)$ denotes the annihilator of $e$ in $H^{*}(N;\Z)$. This
isomorphism maps a cohomology class in $H^{2k+1}(M;\Z)$ to a
cohomology class in $H^{2k}(N;\Z)$, lowering the degree by 1. It is
an $H^{even}(M;\Z)$-module isomorphism in the following sense.

The odd cohomology $H^{odd}(M;\Z)$ is an $H^{even}(M;\Z)$-module.
Moreover, $ker\rho = Ann(e)$ is an $coker\rho = H^{*}(N;\Z)/\langle
e\rangle$-module in the natural way, so it is also an
$H^{even}(M;\Z)$-module via the identification of $coker\rho$ and
$H^{even}(M;\Z)$ given in \eqref{eq:H_e}. The isomorphism in
\eqref{eq:H_o} respects these module structures.
\end{proposition}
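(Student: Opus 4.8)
The plan is to read off both isomorphisms directly from the exactness of the sequence \eqref{eq:grouped ses}, using the push--pull formula \eqref{eq:push-pull} to handle the module structure, so that essentially no new input beyond the setup is required.

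First I would treat the even part. By exactness of \eqref{eq:grouped ses} the ring map $d^{*}\colon H^{*}(N;\Z)\to H^{*}(M;\Z)$ is surjective onto $H^{even}(M;\Z)$ (its image lies in the even part since $H^{odd}(N;\Z)=0$), and its kernel is the image of $\rho=\cup e$, that is, the ideal $\langle e\rangle$. Because $d^{*}$ is a ring homomorphism, the first isomorphism theorem yields a ring isomorphism $H^{*}(N;\Z)/\langle e\rangle \xrightarrow{\ \sim\ } H^{even}(M;\Z)$, which is \eqref{eq:H_e}; it is degree preserving because $d^{*}$ carries $H^{2k}(N;\Z)$ into $H^{2k}(M;\Z)$. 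For the odd part, exactness of \eqref{eq:grouped ses} at the second term says $d_{*}$ is injective with image $\ker\rho = Ann(e)$, so $d_{*}$ induces an isomorphism of graded abelian groups $H^{odd}(M;\Z)\xrightarrow{\ \sim\ }Ann(e)$, which is \eqref{eq:H_o}; the short exact sequences \eqref{eq:ses} show $d_{*}$ sends $H^{2k+1}(M;\Z)$ into $H^{2k}(N;\Z)$, so this isomorphism lowers degree by one.

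Then I would verify the module statement. The subgroup $Ann(e)\subset H^{*}(N;\Z)$ is an $H^{*}(N;\Z)$-submodule on which the ideal $\langle e\rangle$ acts as zero, hence it is naturally a module over $coker\rho = H^{*}(N;\Z)/\langle e\rangle$, and therefore over $H^{even}(M;\Z)$ via \eqref{eq:H_e}: for $\bar\alpha\in H^{even}(M;\Z)$ pick $\alpha\in H^{*}(N;\Z)$ with $d^{*}\alpha=\bar\alpha$ and set $\bar\alpha\cdot\xi := \alpha\cup\xi$ for $\xi\in Ann(e)$, which is independent of the lift since $(e\gamma)\cup\xi = \gamma\cup(e\cup\xi)=0$. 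To see $d_{*}$ respects these structures, take $\beta\in H^{odd}(M;\Z)$ and apply the push--pull formula \eqref{eq:push-pull}: $d_{*}(\bar\alpha\cup\beta) = d_{*}(d^{*}\alpha\cup\beta) = \alpha\cup d_{*}\beta = \bar\alpha\cdot d_{*}\beta$. This is exactly the asserted compatibility, and together with Remark~\ref{rmk:H_e} it finishes the proof.

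The argument is essentially a diagram chase, so I do not expect a genuine obstacle; the only point needing a little care is the well-definedness and mutual compatibility of the module structures on $H^{odd}(M;\Z)$, on $Ann(e)$, and on $coker\rho$, and that is entirely handled by the identity \eqref{eq:push-pull}, which was already derived from the general properties of the Gysin sequence. In short, the content of the proposition is contained in the exactness of \eqref{eq:grouped ses} and in the projection formula \eqref{eq:push-pull}.
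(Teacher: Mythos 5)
Your proposal is correct and follows essentially the same route as the paper's proof: both read the even-part isomorphism off the exactness of \eqref{eq:grouped ses} together with the fact that $d^{*}$ is a ring homomorphism, and both obtain the odd-part statement from exactness plus the push--pull formula \eqref{eq:push-pull}, which makes $d_{*}$ an $H^{*}(N;\Z)$-module map and hence an $H^{*}(N;\Z)/\langle e\rangle$-module isomorphism onto $\ker\rho=Ann(e)$. Your explicit check that the $coker\,\rho$-action on $Ann(e)$ is well defined is a detail the paper leaves implicit, but it is the same argument.
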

\begin{proof}
The isomorphism \eqref{eq:H_e} follows from the exactness of
\eqref{eq:grouped ses} at the second $H^{*}(N;\Z)$ and
$H^{even}(M;\Z)$ and also the fact that $d^{*}:
H^{*}(N;\Z)\rightarrow H^{even}(M;\Z)$ is a ring homomorphism.

As for \eqref{eq:H_o}, first notice that it follows from
\eqref{eq:push-pull} that the map
\begin{equation}
d_{*}: H^{odd}(M;\Z)\rightarrow H^{*}(N;\Z)
\end{equation}
is a $H^{*}(N;\Z)$-module homomorphism. Exactness of
\eqref{eq:grouped ses} at $H^{odd}(M;\Z)$ and the first
$H^{2k}(N;\Z)$ shows that
\begin{equation}
d_{*}: H^{odd}(M;\Z)\rightarrow ker\rho
\end{equation}
is an $H^{*}(N;\Z)$-module isomorphism, hence an
$H^{*}(N;\Z)/\langle e\rangle$-module isomorphism.
\end{proof}

So to compute the cohomology of $M$, we only need to describe the
Euler class $e$ explicitly as a polynomial in
$x_{1},x_{2},...,x_{m}$, where $x_{i}$'s are generators of
$H^{*}(N;\Z)$, as in
 Theorem~\ref{thm:symp orbi cohomology}.

\begin{definition}
Define $\tilde{\pi_{\Z}}: \Z^m\rightarrow \Z^{n-1}$ by mapping the
$i^{th}$ standard basis vector $e_{i}$ of $\Z^m$ to $\tilde{v_{i}}$.
This induces a map
\begin{equation}
\tilde{\pi}: T^m\rightarrow T^{n-1}.
\end{equation}
in the same way we defined $\pi_{T}$ in \eqref{eq:pi_T}. Let
$\tilde{K}=ker \tilde{\pi}$. Since $P$ is Delzant and
$\tilde{v_{i}}$ is primitive, $\tilde{K}$ is connected, and there
exists a splitting $\tilde{r}: T^m\rightarrow \tilde{K}$ such that
$\tilde{r}|_{\tilde{K}}=id_{\tilde{K}}$.
\end{definition}

We will use the following diagram to compute the Euler class $e$.

 {
\[\xymatrix{
  M \ar[d]_{d}  & L_{1}=ET^{n-1}\times_{T^{n-1}}M \ar[d]_{d_{1}}  &
   L_{2} \ar[d]_{d_{2}}  & L_{3}=ET^m\times_{T^m}S^1 \ar[d]^{d_{3}} \\
  N \ar[r]^(.3){f_{1}} & ET^{n-1}\times_{T^{n-1}}N &
  (E\tilde{K}\times ET^{n-1})\times_{T^m}P^{\mu} \ar[l]_{f_{2}}  \ar[r]^(0.6){f_{3}} & ET^{m}\times_{T^m}pt  }\]
}

The base spaces and the maps are virtually the same as that of
\eqref{eq:diagram sym orb}. An element $t_m\in T^m$ acts on
$E\tilde{K}\times ET^{n-1}$ via the diagonal action of
$(\tilde{r}(t_m), \tilde{\pi}(t_m))$. We now turn to the top line.
On $L_{1}$, $T^{n-1}\circlearrowright M= (T^{n}\times P)/\Delta$ by
multiplication of $T^{n-1}$ on the first $n-1$ coordinates of
$T^{n}$.
 On $L_{3}$, $T^{m}\circlearrowright S^{1}$ with weight
$-(v_{1n},v_{2n},...,v_{mn})$. We will use $\vec{v^l}$ to denote the
vector $(v_{1n},v_{2n},...,v_{mn})$. Finally,
 $L_{2}$ is defined as the pull-back $f_{2}^{*}(L_{1})$.

\begin{lemma}\label{lemma:digram Euler}
As principal-$S^{1}$-bundles over $N$,
\begin{equation}\label{eq:f_1}
f_{1}^{*}(L_{1})=M.
\end{equation}
As principal $S^{1}$-bundles over $(E\tilde{K}\times
ET^{n-1})\times_{T^m}P^{\mu}$,
\begin{equation}\label{eq:L_2}
L_{2}=f_{2}^{*}L_{1}= f_{3}^{*}L_{3}.
\end{equation}
\end{lemma}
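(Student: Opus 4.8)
The plan is to verify the two displayed equalities by chasing through the explicit definitions of the principal $S^1$-bundles involved, relying on the fact that pull-back of principal bundles is natural and that the bundles $d$, $d_1$, $d_2$, $d_3$ are all obtained by the same Borel-type quotient recipe applied to the $S^1 \leq T^n$ (resp. to $S^1 \leq T^m$ acting with weight $-\vec{v^l}$). First I would record, from Proposition~\ref{S1 bundle}, that $d: M \to N$ is the $S^1$-bundle whose total space is $(T^n\times P)/\Delta$ and whose $S^1$-action is multiplication in the last coordinate of $T^n$. Likewise $d_1: L_1 = ET^{n-1}\times_{T^{n-1}}M \to ET^{n-1}\times_{T^{n-1}}N$ is the bundle obtained by applying $ET^{n-1}\times_{T^{n-1}}(-)$ to $d$; since that functor preserves principal $S^1$-bundles (the $S^1$-action commutes with the $T^{n-1}$-action being quotiented), $d_1$ is again a principal $S^1$-bundle.

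For \eqref{eq:f_1}, the map $f_1: N \to ET^{n-1}\times_{T^{n-1}}N$ is the inclusion of the fiber over the basepoint of $BT^{n-1}$. Pulling the bundle $L_1 = ET^{n-1}\times_{T^{n-1}}M$ back along $f_1$ gives the restriction of the $S^1$-bundle $d_1$ to that fiber, which by construction is exactly the fiber $M$ with its original $S^1$-action, i.e.\ the bundle $d: M\to N$. So $f_1^*(L_1) = M$ as principal $S^1$-bundles over $N$; this is essentially a tautology once one unwinds the Borel construction, and parallels the role of $f_1$ in Theorem~\ref{thm:symp orbi equi cohomology}.

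For \eqref{eq:L_2}, the first equality $L_2 = f_2^*L_1$ is the definition of $L_2$. The content is the second equality $f_2^*L_1 = f_3^*L_3$. Here I would compute both sides as explicit quotients. On one hand, $f_3^*L_3$ is the $S^1$-bundle over $(E\tilde K\times ET^{n-1})\times_{T^m}P^\mu$ obtained by applying $(E\tilde K\times ET^{n-1})\times_{T^m}(-)$ — using the $T^m$-action via $(\tilde r, \tilde\pi)$ — to the bundle $ET^m\times_{T^m}S^1 \to ET^m\times_{T^m}pt$, i.e.\ it is $(E\tilde K\times ET^{n-1})\times_{T^m}(P^\mu\times S^1)$ where $T^m$ acts on the $S^1$ factor with weight $-\vec{v^l}$. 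On the other hand, $f_2^*L_1$ is the pull-back along $f_2: [(x,y),[t_m,p]]\mapsto [y,[\tilde\pi(t_m),p]]$ of the bundle $L_1$, whose fiber over $[y,[\tilde t_{n-1},p]]$ is $M$. I would exhibit the isomorphism between these two $S^1$-bundles explicitly by tracking the "last coordinate" $S^1$ of $T^n$ through $f_2$: a point of $P^\mu = (T^m\times P)/\Omega$ maps under $\pi'$-type projections into $P^\lambda$ and then into $N$, and the residual circle — the fiber of $M\to N$ — is precisely the $S^1 \leq T^m$ image under the weight $-\vec{v^l}$ datum, which is how $L_3$ was defined. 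The key point to check is that the cocycle/clutching data agree, which reduces to the compatibility of $\tilde\pi$ and the projection $\pi_T: T^m\to T^n$ with the characteristic data $v_i = (\tilde v_i, v_{in})$: dropping the last coordinate of $v_i$ gives $\tilde v_i$, and the last coordinate alone gives $v_{in}$, so the $S^1$-bundle "error term" between $N$ and $M$ is governed exactly by $\vec{v^l} = (v_{1n},\dots,v_{mn})$.

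The main obstacle I expect is purely bookkeeping: making the identification in \eqref{eq:L_2} genuinely canonical rather than merely "the same by inspection." One must be careful that the $T^m$-action on $E\tilde K\times ET^{n-1}$ via $(\tilde r, \tilde\pi)$ interacts correctly with the splitting $\tilde r$ and with the $S^1$-factor in $L_3$, so that the quotient really does produce the bundle $L_1$ pulled back along $f_2$ and not some twist of it. I would handle this by writing down the isomorphism on total spaces at the level of representatives $[(x,y),[t_m,p],s]$ with $s\in S^1$, using the splitting $\tilde r$ to normalize the $\tilde K$-component and the relation $v_i = (\tilde v_i, v_{in})$ to absorb the $S^1$-factor into the last coordinate of a representative in $T^n$, and then checking $\Delta$-well-definedness facet by facet exactly as in the proof of Lemma~\ref{lemma:stabilizer}. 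Everything else — naturality of pull-back, that $(-)\times_{T^?}(-)$ preserves $S^1$-bundles when the circle acts compatibly — is formal.
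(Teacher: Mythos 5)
Your proposal is correct and follows essentially the same route as the paper: $f_1^*(L_1)=M$ is treated as a restriction-to-fiber tautology, and $f_2^*L_1=f_3^*L_3$ is proved by writing an explicit $S^1$-equivariant map over the identity of the base and checking well-definedness against both the $T^m$-quotient relation (via $\tilde r,\tilde\pi$ and the weight $-\vec{v^l}$) and the $\Omega_p$/$\Delta_p$ relations using $v_i=(\tilde v_i,v_{in})$. The "absorption of the $S^1$-factor into the last coordinate" you describe is exactly the paper's fiberwise map $[x,y,e^{i\beta}]\mapsto[y,[(\tilde\pi(e^{i\vec\theta}),e^{i\beta+i\langle\vec{v^l},\vec\theta\rangle}),p]]$, whose correction factor $e^{i\langle\vec{v^l},\vec\theta\rangle}$ is the bookkeeping you flagged.
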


\begin{proof}
The map $f_{1}$ is an inclusion, so the pull back of $L_{1}$ by
$f_{1}$ is just by restriction. Then the equation \eqref{eq:f_1}
follows from the definition.

To show \eqref{eq:L_2}, it's enough if we can construct an
$S^1$-equivariant map from $f_{3}^{*}L_{3}$ to $f_{2}^{*}L_{1}$ ,
which lifts the identity map of the base space $(E\tilde{K}\times
ET^{n-1})\times_{T^m}P^{\mu}$.

Let $q= [x,y,[e^{i\vec{\theta}},p]]$ be a point in the base space
$(E\tilde{K}\times ET^{n-1})\times_{T^m}P^{\mu}$, where $x\in
E\tilde{K}, y\in ET^{n-1}, \vec{\theta}=(\theta_{1},...,\theta_{m}),
e^{i\vec{\theta}}=(e^{i\theta_{1}},...,e^{i\theta_{m}})\in T^m$, and
$p\in P$.

The fiber of $f_{3}^{*}L_{3}$ over the point $q$ is, by the
definition of pull-back, the fiber of $L_{3}$ over the point
$f_{3}(q)=[(x,y),pt]$, namely
\begin{equation}\label{eq:1}
f_{3}^{*}L_{3}|_{q}=\{[(x,y), e^{i\beta}]:\beta\in \R\}.
\end{equation}

 The fiber of
$f_{2}^{*}L_{1}$ over the point $q$ is, by the definition of
pull-back, the fiber of $L_{1}$ over the point $f_{2}(q)=[y,[
\tilde{\pi}(e^{i\vec{\theta}}),p]]$, namely
\begin{equation}\label{eq:2}
f_{2}^{*}L_{1}|_{q}=\{[y,[(\tilde{\pi}(e^{i\vec{\theta}}),e^{i\alpha}),p]]:
\alpha\in \R\}.
\end{equation}
Define a map from $f_{3}^{*}L_{3}|_{q}$ to $f_{2}^{*}L_{1}|_{q}$ by
sending
\begin{equation}\label{eq:3}
[x,y, e^{i\beta}]\mapsto
[y,[(\tilde{\pi}(e^{i\vec{\theta}}),e^{i\beta+i\langle
\vec{v^{l}},\vec{\theta}\rangle}),p]].
\end{equation}
We call this map $s_{q}$. To show $s_{q}$ is well-defined, we need
to show the definition is independent of the choice of
representative of $q$.

First, assume $e^{i\vec{\phi}}\in T^m$, and so
\begin{equation}
[x,y,[e^{i\vec{\theta}},p]]=[\tilde{r}(e^{i\vec{\phi}})x,\tilde{\pi}(e^{i\vec{\phi}})y,[e^{i(\vec{\theta}+\vec{\phi})},p]].
\end{equation}
Using this representative of $q$, then
\begin{eqnarray*}\label{eq:4}
s_{q}([x,y,e^{i\beta}])& = &
s_{q}([\tilde{r}(e^{i\vec{\phi}})x,\tilde{\pi}(e^{i\vec{\phi}})y,
e^{i(\beta+\langle -\vec{v^{l}},\vec{\phi}\rangle)}])\\
& = &
[\tilde{\pi}(e^{i\vec{\phi}})y,[(\tilde{\pi}(e^{i(\vec{\theta}+\vec{\phi})}),e^{i\beta-i\langle
\vec{v^{l}},\vec{\phi}\rangle
+i\langle\vec{v^{l}},(\vec{\theta}+\vec{\phi}) \rangle}),p]]\\
& = &
[\tilde{\pi}(e^{i\vec{\phi}})y,[(\tilde{\pi}(e^{i\vec{\phi}})\tilde{\pi}(e^{i\vec{\theta}}),e^{i\beta+i\langle
 \vec{v^{l}},\vec{\theta}\rangle}),p]].
\end{eqnarray*}
This equals to the
 RHS of \eqref{eq:3}
by the definition of $L_{1}$ as given before Lemma~\ref{lemma:digram
Euler}.

Second, choose a different representative by letting
$e^{i\vec{\delta}}\in \Omega_{p}$, and then
\begin{equation}
[x,y,[e^{i\vec{\theta}},p]]=[x,y,[e^{i\vec{\delta}}e^{i\vec{\theta}},p]].
\end{equation}
Using this representative of $q$, we have:
\begin{eqnarray*}\label{eq:5}
s_{q}([(x,y),e^{i\beta}]) & = &
[y,[(\tilde{\pi}(e^{i\vec{\delta}}e^{i\vec{\theta}}),e^{i\beta+i\langle
\vec{v^{l}},\vec{\delta}+\vec{\theta}\rangle}),p]]\\
& = &
[y,[(\tilde{\pi}(e^{i\vec{\delta}})\cdot\tilde{\pi}(e^{i\vec{\theta}}),e^{i\langle\vec{v^l},\vec{\delta}\rangle}\cdot
e^{i\beta+i\langle\vec{v^l},\vec{\theta}\rangle}),p]].
\end{eqnarray*}
This also equals to the RHS of \eqref{eq:3} since
\begin{equation}\label{eq:6}
(\tilde{\pi}(e^{i\vec{\delta}}),e^{i\langle\vec{v^l},\vec{\delta}\rangle})\in
\Delta_{p},
\end{equation}
which is true by definition of $\Delta, \widetilde{\Delta}$ and
$\vec{v^l}$.

So the map $s_{q}$ defined by \eqref{eq:3} is well-defined. It's
obviously $S^{1}$-equivariant.
\end{proof}

Since the Euler class of $L_{3}$ is $\sum_{i=1}^{m}v_{in}x_{i}$,
using Lemma~\ref{lemma:digram Euler} and  the naturality of Euler
classes, we conclude
\begin{equation}\label{eq:e}
e=\sum_{i=1}^{m}v_{in}x_{i}.
\end{equation}

\begin{definition}
We define linear forms
\begin{equation}
J_{k}=\sum_{i=1}^{m}v_{ik}x_{i}, 1\leq k\leq n,
\end{equation}
and the ideals
\begin{equation}
\J = \langle J_{1},\cdots,J_{n-1},J_{n}\rangle
\end{equation}
and
\begin{equation}
\widetilde{\J}=\langle J_{1},\cdots,J_{n-1}\rangle,
\end{equation}
where $\langle S \rangle$ denotes the ideal in $\Z[x_{1},...,x_{m}]$
generated by the elements of $S$.
\end{definition}
\begin{remark}
Notice that by definition we have $e=J_{n}$.
\end{remark}

Combining Theorem~\ref{thm:symp orbi cohomology},
Proposition~\ref{prop:(co)ker rho} and \eqref{eq:e}, we have proved
the following:
\begin{theorem}\label{thm:contact cohomology}
Assume \begin{equation}
 C= \bigcap_{i=1}^{m}\{x\in \R^n: \langle x, v_{i}\rangle \geq 0\}
\end{equation} is a strictly convex good cone and $M$ is the good contact toric
manifold associated with it. Further assume that
$C_{0}=C\backslash\{\vec{0}\}\subset U\R^n$ and the smoothness
criterion for $C$ holds. Let
\begin{equation}\rho:\frac{\Z[x_{1},...,x_{m}]}{\langle \mathcal
{I},\widetilde{\J} \rangle}\rightarrow
\frac{\Z[x_{1},...,x_{m}]}{\langle \mathcal
{I},\widetilde{\J}\rangle}
\end{equation}
be multiplication by $J_{n}$. Then:

\begin{equation}\label{eq:7}
H^{even}(M;\Z)\simeq coker\rho\simeq \Z[x_{1},...,x_{m}]/\langle
\I,\J \rangle\end{equation} as rings, where $x_{i}$ represents a
cohomology class of degree two.

 Moreover,
\begin{equation}\label{eq:8}
H^{odd}(M;\Z)\simeq ker\rho \end{equation} as $(H^{even}(M;\Z)\simeq
coker\rho)$-modules. A homogeneous polynomial of degree $k$
represents a cohomology class of degree $2k+1$ under this
isomorphism.
\end{theorem}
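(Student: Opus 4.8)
The plan is to assemble the theorem from three ingredients already in hand: Theorem~\ref{thm:symp orbi cohomology}, which identifies $H^{*}(N;\Z)$ with $\Z[x_{1},\dots,x_{m}]/\langle\I,\widetilde{\J}\rangle$ (here $k=n-1$, so the linear ideal coming from the Delzant construction is generated by $J_{1},\dots,J_{n-1}$, which is exactly $\widetilde{\J}$); Proposition~\ref{prop:(co)ker rho}, which expresses $H^{even}(M;\Z)$ and $H^{odd}(M;\Z)$ as the cokernel and kernel of multiplication by the Euler class $e$ on $H^{*}(N;\Z)$; and the computation $e=\sum_{i=1}^{m}v_{in}x_{i}=J_{n}$ from \eqref{eq:e}. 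First I would simply substitute the description of $H^{*}(N;\Z)$ into the statement of Proposition~\ref{prop:(co)ker rho}, so that $\rho$ becomes multiplication by the class of $J_{n}$ on the ring $\Z[x_{1},\dots,x_{m}]/\langle\I,\widetilde{\J}\rangle$; this is precisely the map $\rho$ in the theorem statement, so the identifications $H^{even}(M;\Z)\simeq\operatorname{coker}\rho$ and $H^{odd}(M;\Z)\simeq\ker\rho$ are immediate.

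Next I would compute $\operatorname{coker}\rho$ explicitly. By definition $\operatorname{coker}\rho$ is the quotient of $\Z[x_{1},\dots,x_{m}]/\langle\I,\widetilde{\J}\rangle$ by the image of multiplication by $J_{n}$, which is the principal ideal generated by the class of $J_{n}$; hence $\operatorname{coker}\rho\simeq\Z[x_{1},\dots,x_{m}]/\langle\I,\widetilde{\J},J_{n}\rangle=\Z[x_{1},\dots,x_{m}]/\langle\I,\J\rangle$, since $\J=\langle J_{1},\dots,J_{n-1},J_{n}\rangle=\langle\widetilde{\J},J_{n}\rangle$. This gives \eqref{eq:7}. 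That this is a ring isomorphism and that it preserves degree (doubling the polynomial degree, $x_{i}$ landing in $H^{2}$) is exactly what Proposition~\ref{prop:(co)ker rho} already records, because the isomorphism is induced by the ring map $d^{*}$ and all the identifications used are degree-preserving ring maps. For the odd part, \eqref{eq:8} is just the statement $H^{odd}(M;\Z)\simeq\ker\rho$ from Proposition~\ref{prop:(co)ker rho}, together with the bookkeeping that $d_{*}$ lowers degree by one: a class in $H^{2k+1}(M;\Z)$ corresponds to one in $H^{2k}(N;\Z)$, i.e. to a homogeneous polynomial of degree $k$ in the $x_{i}$. The module structure assertion is the final clause of Proposition~\ref{prop:(co)ker rho}: $\ker\rho=\operatorname{Ann}(J_{n})$ is naturally a module over $\operatorname{coker}\rho$, and the isomorphism $H^{odd}(M;\Z)\simeq\ker\rho$ is $\operatorname{coker}\rho$-linear, hence $H^{even}(M;\Z)$-linear under \eqref{eq:7}.

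Honestly, under this organization there is no serious obstacle left: every nontrivial step — the Gysin sequence, the short exact sequences \eqref{eq:ses}, the kernel/cokernel description, the identification $e=J_{n}$, and the ring structure on $H^{*}(N;\Z)$ — has already been established in the preceding lemmas and propositions. The only thing requiring a word of care is making sure the smoothness criterion is invoked where it is needed (it guarantees $N$ is a genuine manifold, that $d\colon M\to N$ is an honest principal $S^{1}$-bundle with a well-defined integral Euler class, and that $\widetilde{K}$ is connected so Theorem~\ref{thm:symp orbi cohomology} applies with $\Z$ coefficients); these hypotheses are all in force by assumption in the theorem. The one mild subtlety to state cleanly is the grading convention for the odd part — tracking that $d_{*}$ drops degree by one so that degree-$k$ polynomials represent degree-$(2k+1)$ classes — but this is immediate from the degrees appearing in the short exact sequence \eqref{eq:ses}. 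So the proof is essentially a two-line deduction: invoke Proposition~\ref{prop:(co)ker rho}, substitute Theorem~\ref{thm:symp orbi cohomology} and \eqref{eq:e}, and simplify the principal-ideal quotient.
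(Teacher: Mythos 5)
Your proposal is correct and is essentially the paper's own argument: the paper proves this theorem precisely by combining Theorem~\ref{thm:symp orbi cohomology} (with $k=n-1$, so the linear ideal is $\widetilde{\J}$), Proposition~\ref{prop:(co)ker rho}, and the Euler class identification $e=J_{n}$ from \eqref{eq:e}. Your additional bookkeeping about degrees and where the smoothness criterion enters matches the paper's setup, so there is nothing to add.
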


The next theorem states that half of the Betti numbers of $M$
vanish.

\begin{theorem}\label{thm:vanishing odd}Under the same assumption
for  $M$ as in Theorem~\ref{thm:contact cohomology}, we have
\begin{equation}H^{2k+1}(M;\Z)=0
\end{equation}
for $\{k\in \N: 1\leq 2k+1\leq n-1\}$, and
\begin{equation}
H^{2k}(M;\Q)=0
\end{equation}
for $\{k\in \N:n\leq 2k\leq 2n-2\}$.
\end{theorem}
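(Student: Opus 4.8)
The strategy is to use the isomorphisms of Theorem~\ref{thm:contact cohomology} together with the Hard Lefschetz theorem on the symplectic toric manifold $N$. By \eqref{eq:H_o} (or \eqref{eq:8}), $H^{2k+1}(M;\Z)\cong \ker\rho$ in degree $2k$, where $\rho$ is multiplication by the Euler class $e=J_n$ on $H^*(N;\Z)$. So the first claim, $H^{2k+1}(M;\Z)=0$ for $1\le 2k+1\le n-1$, amounts to showing that multiplication by $e$ is \emph{injective} on $H^{2k}(N;\Z)$ for $0\le k\le (n-2)/2$, i.e.\ in degrees strictly below the middle dimension of $N$. First I would identify $e$ with (a multiple of) the cohomology class of the symplectic form $[\omega_N]$ on $N$: by the appendix, the generators $x_i\in H^2(N;\Z)$ are Poincar\'e duals of the facet divisors, the linear form $J_n=\sum_i v_{in}x_i$, and one checks via the Delzant/moment-polytope description that $J_n$ is (up to positive scaling) the class determined by the support numbers $\eta_i=-v_{in}$, which is exactly $[\omega_N]$ — this is essentially the computation relating the Euler class of the $S^1$-bundle $M\to N$ to the symplectic form that the introduction promises. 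Since $N$ is a compact K\"ahler manifold (symplectic toric manifolds are projective), the Hard Lefschetz theorem says that $\cup[\omega_N]^{\,n-1-j}: H^j(N;\R)\xrightarrow{\sim} H^{2(n-1)-j}(N;\R)$ for all $j$; in particular $\cup[\omega_N]: H^j(N;\R)\to H^{j+2}(N;\R)$ is injective for $j\le n-2$ and surjective for $j\ge n-1$. Because $H^*(N;\Z)$ is free and concentrated in even degrees (Theorem~\ref{thm:symp orbi cohomology}), injectivity over $\R$ upgrades to injectivity over $\Z$, giving $\ker\rho=0$ in degrees $2k\le n-2$, hence $H^{2k+1}(M;\Z)=0$ for $2k+1\le n-1$.

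For the second claim, $H^{2k}(M;\Q)=0$ for $n\le 2k\le 2n-2$, I would pass to $\Q$-coefficients and use \eqref{eq:H_e}: $H^{2k}(M;\Q)\cong \bigl(H^*(N;\Q)/\langle e\rangle\bigr)$ in degree $2k$, i.e.\ the cokernel of $\cup e: H^{2k-2}(N;\Q)\to H^{2k}(N;\Q)$. By the surjectivity half of Hard Lefschetz just noted, $\cup[\omega_N]:H^{j}(N;\R)\to H^{j+2}(N;\R)$ is surjective once $j\ge n-1$; equivalently $\cup e: H^{2k-2}(N;\Q)\to H^{2k}(N;\Q)$ is surjective whenever $2k-2\ge n-1$, i.e.\ $2k\ge n+1$, and a short separate check handles the parity-boundary case $2k=n$ when $n$ is even (there $2k-2=n-2$, and one still needs surjectivity onto the middle plus-one degree — but $N$ has dimension $2(n-1)$, so the ``middle'' is degree $n-1$; for $2k=n$ with $n$ even we have $2k = n \le 2n-2$ forces $n\ge 2$, and $2k-2=n-2\ge n-1$ fails, so this case needs the stronger statement that $\cup[\omega_N]^{2}$ etc.\ are surjective in the appropriate range, which again follows from the full Hard Lefschetz package $\cup[\omega_N]^{r}:H^{(n-1)-r}\xrightarrow{\sim}H^{(n-1)+r}$). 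Cleanly: Hard Lefschetz implies $\cup[\omega_N]$ is surjective on $H^j(N;\R)$ for every $j\ge n-1$, so $\mathrm{coker}\,\rho$ vanishes in every even degree $2k$ with $2k-2\ge n-1$; combined with $2k\le 2n-2=\dim N$ this covers $n+1\le 2k\le 2n-2$, and the remaining value $2k=n$ (only relevant when $n$ is even) is handled by noting $H^{n}(N)$ lies above the middle degree $n-1$ of $N$, where multiplication by $[\omega_N]$ from $H^{n-2}$ is still surjective by Hard Lefschetz applied with exponent $r=1$ to the isomorphism $H^{n-2}\xrightarrow{\sim}H^{n}$... precisely, $\cup[\omega_N]:H^{n-2}(N;\R)\to H^{n}(N;\R)$ is an isomorphism since $(n-2)+(n)=2(n-1)$. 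Hence $H^{2k}(M;\Q)=0$ throughout $n\le 2k\le 2n-2$.

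\textbf{The main obstacle.} The nontrivial geometric input is the identification of the Euler class $e=J_n$ with a positive multiple of the symplectic (K\"ahler) class $[\omega_N]$, so that Hard Lefschetz applies to $\cup e$ rather than to some random degree-two class; this is where the appendix's descriptions of the generators $x_i$ and the relation between the $S^1$-bundle and the moment data must be used carefully, keeping track of signs (the weights are $-\vec v^l$) and the fact that the $\eta_i=-v_{in}$ are exactly the support numbers of $P$. Once that identification is in hand, everything else is a formal consequence of Theorem~\ref{thm:contact cohomology}, the freeness and even-concentration of $H^*(N)$, and standard Hard Lefschetz for projective toric manifolds; the only care needed is the bookkeeping at the parity boundary $2k=n$ for $n$ even, which I would dispatch by invoking the full set of Lefschetz isomorphisms $\cup[\omega_N]^{r}:H^{(n-1)-r}(N;\R)\xrightarrow{\sim}H^{(n-1)+r}(N;\R)$ rather than just the single-power statement.
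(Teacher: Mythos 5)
Your proposal is correct and follows essentially the same route as the paper: identify the Euler class $e=J_{n}$ with a nonzero scalar multiple of the K\"ahler class (Guillemin's formula $[\omega]=2\pi\sum_i v_{in}D_i$ together with the appendix's $D_i=-x_i$), apply Hard Lefschetz on $N$ to get injectivity of $\cup e$ below the middle degree (killing $\ker\rho$, hence the odd groups) and surjectivity above it (killing $\mathrm{coker}\,\rho$, hence the stated even groups over $\Q$), and use torsion-freeness of $H^{*}(N;\Z)$ to upgrade the odd-degree vanishing from $\Q$ to $\Z$. The only slip is cosmetic: with the paper's conventions $[\omega]=-2\pi e$, a \emph{negative} multiple rather than a positive one, which is harmless since the Lefschetz injectivity/surjectivity statements are unaffected by the sign.
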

\begin{proof}
According to Theorem 6.3 in \cite{Guillemin:Kaehler form}, the
cohomology class of the K$\ddot{a}$hler form on $N$ is given by
\begin{equation}
[\omega]=2\pi\sum_{i=1}^{m}v_{in}D_{i},
\end{equation}
where $D_{i}$ denotes the cohomology class in $H^{2}(N;\R)$ that is
the Poincare dual to $(T^{n-1}\times \tilde{F_{i}})/\tilde{\Delta}$.
According to Proposition~\ref{prop:Poincare dual} in the Appendix,
$D_{i}=-x_{i}$. So
\begin{equation}
[\omega]=-2\pi e.
\end{equation}
Using the Hard Lefschetz Theorem on $N$, it follows easily from
Theorem~\ref{thm:contact cohomology} that
\begin{equation}H^{2k+1}(M;\Q)=0
\end{equation}
for $\{k\in \N: 1\leq 2k+1\leq n-1\}$, and
\begin{equation}
H^{2k}(M;\Q)=0
\end{equation}
for $\{k\in \N:n\leq 2k\leq 2n-2\}$. Furthermore, as the cohomology
ring of a symplectic toric manifold, the ring
$\Z[x_{1},x_{2},...,x_{m}]/\{\I,\tilde{\J}\}$ in
Theorem~\ref{thm:contact cohomology} is torsion-free (see
\cite{toric variety}). Therefore as an ideal of it, $ker\rho$ is
also torsion-free. Hence
\begin{equation}H^{2k+1}(M;\Z)=0
\end{equation}
for $\{k\in \N: 1\leq 2k+1\leq n-1\}$.
\end{proof}

\begin{corollary}\label{cor:trivial between odd}
Under the same assumption for $M$ as in Theorem~\ref{thm:contact
cohomology}, the product of two odd-degree cohomology classes of $M$
is zero.
\end{corollary}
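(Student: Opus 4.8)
The plan is to combine the two structural results just proved: the vanishing of low-degree odd cohomology in Theorem~\ref{thm:vanishing odd}, and the dimension bound coming from $M$ being a $(2n-1)$-manifold. Since $M$ is a compact connected oriented $(2n-1)$-dimensional manifold (it is a principal $S^1$-bundle over the $(2n-2)$-dimensional symplectic toric manifold $N$), its cohomology vanishes above degree $2n-1$. Now take any two odd-degree classes $\alpha\in H^{2a+1}(M;\Z)$ and $\beta\in H^{2b+1}(M;\Z)$. Their product lies in $H^{2a+2b+2}(M;\Z)$, which is an even-degree group.

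First I would dispose of the case where the product lands above the top dimension: if $2a+2b+2 > 2n-1$, i.e. $a+b+1 > n-1$, then $H^{2a+2b+2}(M;\Z)=0$ trivially and $\alpha\cup\beta=0$. So we may assume $2a+2b+2 \le 2n-2$, equivalently $a+b+1 \le n-1$, hence $a+b \le n-2$. In this range, at least one of $a,b$ satisfies $2\cdot(\text{that index})+1 \le n-1$: indeed if both $2a+1 > n-1$ and $2b+1 > n-1$ then adding gives $2a+2b+2 > 2n-2$, contradicting the assumption. Say without loss of generality $2a+1 \le n-1$. Then by the first conclusion of Theorem~\ref{thm:vanishing odd}, $H^{2a+1}(M;\Z)=0$, so $\alpha=0$ and therefore $\alpha\cup\beta=0$.

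I expect the only subtlety is making the arithmetic airtight — in particular confirming that the index ranges in Theorem~\ref{thm:vanishing odd} (namely $1\le 2k+1\le n-1$) exactly cover the cases not already killed by the dimension bound, and handling the small/degenerate cases (e.g.\ $a=0$, where $H^1(M;\Z)=0$ by Corollary~\ref{cor:H^1 of M} as well). One should also note that the product of an odd class with $H^0$ is not at issue here since $H^0$ is even degree; the statement genuinely concerns two classes of odd degree. No deep input beyond Theorem~\ref{thm:vanishing odd} and the manifold dimension is needed, so the main "obstacle" is purely bookkeeping the inequalities, which is routine.

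Here is how I would write the argument.
\begin{proof}
Let $\alpha\in H^{2a+1}(M;\Z)$ and $\beta\in H^{2b+1}(M;\Z)$ be nonzero homogeneous odd-degree classes, with $a,b\ge 0$; it suffices to show $\alpha\cup\beta=0$. Since $M$ is a compact $(2n-1)$-dimensional manifold, $H^{j}(M;\Z)=0$ for $j>2n-1$. The product $\alpha\cup\beta$ lies in $H^{2(a+b+1)}(M;\Z)$. If $2(a+b+1)>2n-1$, then this group is zero and we are done, so assume $a+b+1\le n-1$. If both $2a+1>n-1$ and $2b+1>n-1$ held, then adding would give $2(a+b+1)>2n-2$, contradicting $a+b+1\le n-1$. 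Hence at least one index, say $a$, satisfies $2a+1\le n-1$, so that $a\ge 0$ puts $2a+1$ in the range $\{k\in\N:1\le 2k+1\le n-1\}$ of Theorem~\ref{thm:vanishing odd} (using Corollary~\ref{cor:H^1 of M} when $a=0$). Therefore $H^{2a+1}(M;\Z)=0$, forcing $\alpha=0$, a contradiction. Thus no such nonzero pair exists and every product of two odd-degree classes vanishes.
\end{proof}
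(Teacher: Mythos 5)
Your argument is correct and is exactly the paper's proof: the paper's one-line ``dimension considerations'' means precisely that Theorem~\ref{thm:vanishing odd} forces nonzero odd classes to have degree at least $n$, so their product lands above the top dimension $2n-1$ of $M$ and must vanish. You have simply spelled out the bookkeeping of the inequalities, which is fine.
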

\begin{proof}
This is because of dimension considerations.
\end{proof}

\begin{remark}
Theoretically, Theorem~\ref{thm:contact cohomology} already tells us
all the information of $H^{*}(M;\Z)$ as an additive group. Perhaps
there should be a combinatorial proof for Theorem~\ref{thm:vanishing
odd}, but the author could not find one.
\end{remark}

\begin{remark}
Theorem~\ref{thm:contact cohomology} tells us what the elements of
$H^{*}(M;\Z)$ are, \eqref{eq:7} tells us how to multiply two
even-degree cohomology classes, \eqref{eq:8} tells us how to
multiply an even-degree cohomology class and an odd-degree
cohomology class. Corollary~\ref{cor:trivial between odd} tells us
the product of two odd-degree cohomology classes must be zero. So
the ring structure of $H^{*}(M;\Z)$ is now completely determined.
\end{remark}

\section{\bf Appendix: Several equivalent descriptions of the
generators of cohomology rings of symplectic toric
manifolds}\label{sec:comparison}

There is a wide variety of descriptions of the generators of the
cohomology ring of a symplectic toric manifold in the literature. In
this appendix we list some of them and discuss their relations.

Let $P$ be as in \eqref{eq:simple polytope},
\begin{equation}
P=\bigcap_{i=1}^{m}\{x\in \R^k: \langle x, \tilde{v_{i}}\rangle \geq
\eta_{i}\}.
\end{equation}
We will use $N$ to denote the symplectic toric manifold associated
to $P$. We will keep our notation from Example~\ref{ex:sym orb
combinatorics}, so $N= P^{\tilde{\lambda}}= (T^k\times
P)/\tilde{\Delta}$.

The cohomology in this section are assumed to be integral
cohomology.

 The generators in Theorem~\ref{thm:symp orbi cohomology}
of this paper, which we denoted by $x_{i}$, is the image of the
generators of $H^{*}(BT^m)$ under the composition:

{\small \begin{equation}H^{*}(BT^m)\rightarrow
H^{*}(ET^m\times_{T^m}((T^m\times
P)/\Omega))\stackrel{\sim}{\rightarrow}H^{*}(ET^k\times_{T^k}((T^k\times
P)/\tilde{\Delta}))\rightarrow H^{*}(N)
\end{equation}}

 We will now give an easier description for these generators,
starting by defining a $S^1$-bundle over $N=(T^k\times
P)/\tilde{\Delta}$.

For any fixed $j\in\{1,2,...,k\}$, define a characteristic map from
the set of facets of $P$ to $\Z^{k+1}$:
\begin{eqnarray*}
\sigma_{j}: & \F &\rightarrow \Z^{k+1}\\
        & \tilde{F_{j}}&\mapsto (v_{j1},v_{j2},...,v_{jk},1);\\
        & \tilde{F_{i}}&\mapsto (v_{i1},v_{i2},...,v_{ik},0)\ \
        \mbox{for}\  i\neq j.
\end{eqnarray*}
Then $P^{\sigma_{j}}$ is a principal $S^1$-bundle over
$P^{\tilde{\lambda}}$ for similar reasons as in Proposition~\ref{S1
bundle}.

For the same reason as \eqref{eq:L_2} and \eqref{eq:e}, we have the
following proposition.

\begin{proposition} \label{prop:x_i is chern class of a artificial
bundle} Each generator $x_{j}$ in Theorem~\ref{thm:symp orbi
cohomology} is the Euler class, or equivalently, the first Chern
class, of the principal $S^1$-bundle $P^{\sigma_{j}}$ over
$N=P^{\tilde{\lambda}}$.
\end{proposition}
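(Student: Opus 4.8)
The plan is to run exactly the same pull-back argument that established Lemma~\ref{lemma:digram Euler} and formula \eqref{eq:e}, but with the single bundle $L_3$ replaced by the collection of $S^1$-bundles whose total spaces are the $P^{\sigma_j}$. More precisely, I would set up the diagram
\[\xymatrix{
  P^{\sigma_j} \ar[d] & & & ET^m\times_{T^m}S^1_j \ar[d] \\
  N \ar[r]^(.35){f_1} & ET^k\times_{T^k}N & (E\tilde{K}\times ET^k)\times_{T^m}P^{\mu} \ar[l]_{f_2}\ar[r]^(.62){f_3} & ET^m\times_{T^m}pt
}\]
where $S^1_j$ carries the weight $-(0,\dots,0,1,0,\dots,0)$ with the $1$ in the $j$-th slot among the $m$ coordinates (so that, by the discussion in Section~\ref{sec:introduction}, the Euler class of $ET^m\times_{T^m}S^1_j$ is exactly the generator $x_j\in H^2(BT^m)$ pushed into $H^*(ET^m\times_{T^m}pt)$). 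The maps $f_1,f_2,f_3$ and the $T^m$-action on $E\tilde K\times ET^k$ via $(\tilde r(t_m),\tilde\pi(t_m))$ are literally those of \eqref{eq:diagram sym orb}. I then define $L_2^{(j)}$ to be the pull-back $f_2^*(L_1^{(j)})$ where $L_1^{(j)}=ET^k\times_{T^k}P^{\sigma_j}$, exactly as $L_2$ was defined from $L_1$.

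The key steps, in order: (1) Check that $P^{\sigma_j}\to P^{\tilde\lambda}=N$ is genuinely a principal $S^1$-bundle; this is the ``same reason as Proposition~\ref{S1 bundle}'' and amounts to verifying that the circle $\{(1,\dots,1,e^{i\theta})\}\le T^{k+1}$ meets $(\sigma_j)_p=\tilde\Delta_p$ trivially for each $p\in P$, which follows from the Delzant/primitivity hypothesis on $P$ since appending a final coordinate equal to $1$ or $0$ never enlarges the relevant sublattice in a way that captures that circle. (2) Prove the analogue of \eqref{eq:f_1}, i.e. $f_1^*(L_1^{(j)})=P^{\sigma_j}$; since $f_1$ is an inclusion of a fibre this is immediate from the definition of $L_1^{(j)}$. (3) Prove the analogue of \eqref{eq:L_2}, namely $L_2^{(j)}=f_2^*L_1^{(j)}=f_3^*(ET^m\times_{T^m}S^1_j)$, by writing down the fibrewise $S^1$-equivariant isomorphism $s_q$ over a point $q=[x,y,[e^{i\vec\theta},p]]$. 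Here the correction exponent that appeared as $i\langle\vec{v^l},\vec\theta\rangle$ in \eqref{eq:3} is replaced by $i\theta_j$ (the $j$-th component of $\vec\theta$), because $\sigma_j(\tilde F_j)$ has last coordinate $1$ and all other $\sigma_j(\tilde F_i)$ have last coordinate $0$; the well-definedness check against the two kinds of representative changes (multiplying by $e^{i\vec\phi}\in T^m$, and multiplying by $e^{i\vec\delta}\in\Omega_p$) goes through verbatim, with \eqref{eq:6} becoming the statement $(\tilde\pi(e^{i\vec\delta}),e^{i\delta_j})\in\tilde\Delta_p$ — which holds precisely because $\sigma_j$ restricted to the facets containing $p$ is the ``lift'' of $\tilde\lambda$ recording the $j$-th coordinate. (4) Conclude by naturality of Euler classes: $e(P^{\sigma_j})=f_1^*e(L_1^{(j)})$ and $e(L_2^{(j)})$ computed two ways gives $f_2^*e(L_1^{(j)})=f_3^*x_j$; since in cohomology $f_2^*$ is an isomorphism and the composition $H^*(ET^m\times_{T^m}pt)\xrightarrow{f_3^*}\xrightarrow{(f_2^*)^{-1}}\xrightarrow{f_1^*}H^*(N)$ is by definition the map sending the generator to $x_j$, we get $e(P^{\sigma_j})=x_j$. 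Finally, for a complex line bundle the Euler class and the first Chern class agree, giving the ``equivalently'' clause.

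The only real point requiring care — the ``main obstacle'' — is step (3), the explicit construction of $s_q$ and the verification that it is independent of the chosen representative of $q$; everything else is bookkeeping. But this is genuinely just the special case of the computation already carried out in Lemma~\ref{lemma:digram Euler}, obtained by taking the characteristic weight vector to be the single standard basis vector $e_j\in\Z^m$ in place of $\vec{v^l}=(v_{1n},\dots,v_{mn})$; concretely one replaces every occurrence of $\langle\vec{v^l},-\rangle$ by the $j$-th coordinate functional and re-reads the three displayed equalities in that proof. Hence I would keep the write-up short, state the diagram, point to Lemma~\ref{lemma:digram Euler} for the mechanics of $s_q$ with the indicated substitution, and then invoke naturality of Euler classes together with $e=c_1$ for line bundles to finish.
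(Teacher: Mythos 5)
Your proposal is correct and follows essentially the same route as the paper, which itself only remarks that the claim holds ``for the same reason as'' Lemma~\ref{lemma:digram Euler} and \eqref{eq:e}: you carry out exactly that analogy, replacing the weight vector $\vec{v^l}$ by the standard basis vector $e_j$ and the correction exponent $i\langle\vec{v^l},\vec\theta\rangle$ by $i\theta_j$, and then invoke naturality of Euler classes and the definition of $x_j$ via diagram~\eqref{eq:diagram sym orb}. The only nitpick is notational: in your analogue of \eqref{eq:6} the element $(\tilde\pi(e^{i\vec\delta}),e^{i\delta_j})$ should be checked to lie in the stabilizer subgroup of $T^{k+1}$ determined by $\sigma_j$ at $p$ (not in $\tilde\Delta_p\leq T^k$), which is exactly what your ``lift of $\tilde\lambda$'' remark establishes.
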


Recall from \eqref{eq:toric manifold as quotient}, $N$ is the
quotient of a set $(i^*\circ u)^{-1}(i^{*}(-\eta))$, which we will
denote by $Z$, by a torus $K$. Then $Z$ is a principal-$K$ bundle
over $N$.

In the book \cite{book:equivariant deRham}, the cohomology of a
symplectic toric manifold is computed in a very different way
(Theorem 9.8.6 of the book). The generators for the cohomology ring
are $c_{1},c_{2},...,c_{m}$, where $(c_{1},c_{2},...,c_{m})$ are the
Chern classes of the bundle $Z\rightarrow N$. These $c_{i}$'s can
also be described in the following way, as illustrated in both
Section 2.2 of \cite{Guillemin:book} and Section 9.8 of
\cite{book:equivariant deRham}.

The torus $T^m$ acts on $\C^m$ in the standard way and $\C^{m}$
splits
\begin{equation}
\C^m = \C_{1}\oplus \C_{2}\oplus\cdots \oplus \C_{m}.
\end{equation}
The torus $K$, as a subgroup of $T^m$, also acts on $\C^m$ and
preserves this splitting. Then
\begin{equation}
(Z\times \C_{i})/K\rightarrow Z/K
\end{equation}
is a complex line bundle over $N=Z/K$, where the action of $K$ on
$\C_{i}$ is by first including $K$ into $T^m$, then acting with
weight $-(0,...,0,1,0,...,0)$, where the only $1$ is on the $i^{th}$
position. The $K$ action on $Z\times \C_{i}$ is the diagonal action.
Then the first Chern class of this line bundle is $c_{i}$.

\begin{proposition}\label{prop:chern class of torus bundle}
The generator $c_{i}$ is exactly the generator $x_{i}$ used in
Theorem~\ref{thm:symp orbi cohomology}.
\end{proposition}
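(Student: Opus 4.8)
The plan is to identify the two line bundles over $N$ whose first Chern classes are $c_i$ and $x_i$ respectively, and exhibit an explicit isomorphism between them. Recall from Proposition~\ref{prop:x_i is chern class of a artificial bundle} that $x_i$ is the Euler class of the principal $S^1$-bundle $P^{\sigma_i}\to N=P^{\tilde\lambda}$, where $\sigma_i$ is the characteristic map on $\F$ sending $\tilde F_i$ to $(v_{i1},\dots,v_{ik},1)$ and $\tilde F_j$ to $(v_{j1},\dots,v_{jk},0)$ for $j\neq i$; equivalently $x_i=c_1\big((P^{\sigma_i}\times_{S^1}\C)\to N\big)$, the associated complex line bundle where $S^1$ acts on $\C$ with weight $-1$. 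On the other side, $c_i=c_1\big((Z\times\C_i)/K\to Z/K=N\big)$, where $Z=(i^*\circ u)^{-1}(i^*(-\eta))$, $K$ acts on $Z$ as a subgroup of the standard $T^m$-action, and $K$ acts on $\C_i$ with weight $-(0,\dots,1,\dots,0)$ (the $1$ in slot $i$). So the whole proof reduces to producing a bundle isomorphism $(Z\times\C_i)/K \cong P^{\sigma_i}\times_{S^1}\C$ over $N$.

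First I would make the identification $N=Z/K$ concrete in terms of $P^{\tilde\lambda}=(T^k\times P)/\tilde\Delta$. This is exactly the content (for $k$ in place of $n$) of Proposition~\ref{prop: DJ construction is OK for sym orb}: the homeomorphism $\gamma$ sends $[t_k,p]$ to $t_k\cdot[u_0^{-1}(\pi^*(p+\eta))]$, where $u_0$ is the restriction of $u$ to the nonnegative real octant $(\R^m)^+$, and the stabilizer computation (the analogue of Lemma~\ref{lemma:stabilizer}) identifies $\tilde\Delta_p$ with the stabilizer of the corresponding point of $Z$. Pulling the bundle $(Z\times\C_i)/K\to Z/K$ back along $\gamma$, I would describe its total space directly as a quotient of $T^m\times P\times\C_i$: a point over $[t_k,p]$ is represented by $[\,\pi_T^{-1}\text{-lift of }t_k,\;p,\;w\,]$ with $w\in\C_i=\C$, where the $T^m$ and $\Omega_p$ ambiguities act on the $\C$-factor through the $i$-th weight, i.e. through the character of $T^m$ dual to $e_i\in\Z^m$. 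Concretely, on $(T^m\times P)/\Omega = P^\mu$ (Lemma~\ref{lemma:fiberK}'s total space) the line bundle is $(P^\mu\times\C_i)/(\text{$K$-action})$.

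Next I would compute $P^{\sigma_i}\times_{S^1}\C$ in the same coordinates. By the construction of $P^{\sigma_i}$ as $(T^{k+1}\times P)/\Delta^{\sigma_i}$, a point of the associated line bundle over $[t_k,p]$ is represented by a pair $(\,(t_k,e^{i\phi}),p\,)$ with the $\C$-coordinate encoded in $e^{i\phi}$; the recipe for $\sigma_i$ means the extra $S^1$-coordinate transforms, relative to the $T^k$-coordinates, exactly by the combination recording the $i$-th facet — which after unwinding is again the character $e_i$ of $T^m$ composed with $\tilde\pi:T^m\to T^k$ and the splitting $\tilde r$. Comparing the two cocycles over $P^\mu$ (both are the $i$-th coordinate character of $T^m$, twisted by the same data $\{v_{j}\}$ through $\tilde\pi$ and $\Omega_p$), one sees they define the same line bundle; the explicit equivariant map is of the same shape as the map $s_q$ in the proof of Lemma~\ref{lemma:digram Euler} (equation~\eqref{eq:3}), and well-definedness is checked on the two types of representatives (elements of $T^m$, and elements of $\Omega_p=\tilde\Delta_p$) exactly as there.

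The main obstacle is bookkeeping, not conceptual: one must carefully track the three different tori ($K\le T^m$, $T^k$, $S^1$) and two splittings ($\tilde r:T^m\to\tilde K$, and the $S^1$ inside $T^{k+1}$) and verify that the weight with which $K$ acts on $\C_i$ matches, under $\gamma$ and the identification $N=Z/K\cong P^{\tilde\lambda}$, the Euler-class data packaged in $\sigma_i$. Once the two line bundles are matched over $P^\mu$ and shown $K$-equivariantly isomorphic, descent to $N$ gives $c_i=x_i$ in $H^2(N;\Z)$, and since by Theorem~\ref{thm:symp orbi cohomology} these classes generate $H^*(N;\Z)$ as a ring, the proposition follows.
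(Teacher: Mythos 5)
Your proposal is correct and takes essentially the same approach as the paper: both reduce, via Proposition~\ref{prop:x_i is chern class of a artificial bundle} and the identification $N=Z/K\cong P^{\tilde{\lambda}}$ from Proposition~\ref{prop: DJ construction is OK for sym orb}, to exhibiting an explicit isomorphism between $P^{\sigma_i}$ (equivalently its associated line bundle) and $(Z\times S^1)/K$ (resp.\ $(Z\times \C_i)/K$); the paper just builds the map directly over $N$ using a splitting $\alpha:T^k\to T^m$ and the real point $u_{0}^{-1}(\pi^{*}(p)-\eta)$, whereas you compare the two bundles after lifting to $P^{\mu}$ in the style of Lemma~\ref{lemma:digram Euler} and then descend $K$-equivariantly. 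One small slip to fix: since $u(z)=\pi^{*}(\nu([z]))-\eta$, the canonical point of $Z$ over $p$ is $u_{0}^{-1}(\pi^{*}(p)-\eta)$, not $u_{0}^{-1}(\pi^{*}(p+\eta))$ (the latter does not parse, as $\eta\in\R^m$ while $p\in\R^k$).
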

\begin{proof}
Assume without loss of generality that $i=1$. According to
Proposition~\ref{prop:x_i is chern class of a artificial bundle}, it
suffices to show $P^{\sigma_{1}}$ and $(Z\times S^1)/K$ are
isomorphic principal-$S^1$ bundles over $P^{\tilde{\lambda}}=Z/K$,
where the action of $K$ on $S^1$ is by first including $K$ into
$T^m$, then acting with weight $-(1,0,0,...,0)$. The base spaces are
identified by Proposition~\ref{prop: DJ construction is OK for sym
orb}.

Fixing a splitting $\alpha : T^k\rightarrow T^m$, such that
\begin{equation}
\pi\circ\alpha=id_{T^k}.
\end{equation}
Define
\begin{eqnarray*}
f: & P^{\sigma_{1}} & \rightarrow (Z\times S^1)/K \\
   & [t_{k},e^{i\theta},p] & \mapsto [\alpha(t_{k}).u_{0}^{-1}(\pi^{*}(p)-\eta),
   \alpha(t_{k}).e^{i\theta}],
\end{eqnarray*}
where $t_{k}\in T^k, \theta\in \R, p\in P$, and $u_{0}: (\R_{\geq
0})^{m}\rightarrow (\R_{\geq 0})^{m}$ is given by
$(z_1,\cdots,z_m)\mapsto (z_1^2,\cdots, z_m^2)$.  The verification
that this map is well-defined is routine. It is easy to see this map
lifts the identity map of the base space, is $S^1$-equivariant and
non-trivial on each fiber. So $P^{\sigma_{1}}$ and $(Z\times S^1)/K$
are isomorphic, hence $c_{1}=x_{1}$.
\end{proof}

The set $(T^k\times\tilde{F_{i}})/\tilde{\Delta}$ is a submanifold
of $N$. It is the pre-image of $\tilde{F_{i}}$ under the moment map
$\nu$ as defined in \eqref{eq:moment map}. Its Poincare dual, by
definition, is a cohomology class in $H^{2}(N;\R)$. We denote it by
$D_{i}$.

\begin{proposition}\label{prop:Poincare dual}
The class $D_{i}$ equals $-c_{i}=-x_{i}$.
\end{proposition}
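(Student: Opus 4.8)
The plan is to identify $D_i$ with a first Chern class of a line bundle and then compare that line bundle with the bundle $(Z\times\C_i)/K$ from Proposition~\ref{prop:chern class of torus bundle}. Concretely, the submanifold $V_i := (T^k\times\tilde F_i)/\tilde\Delta \subset N$ is a $T^k$-invariant symplectic (in fact complex) codimension-two submanifold — it is the toric divisor corresponding to the facet $\tilde F_i$. For such a submanifold the Poincar\'e dual $D_i$ is represented by the Thom class, and equals the Euler class (first Chern class) of the normal bundle $\nu_i$ of $V_i$ in $N$, pushed forward; more usefully, $D_i = c_1(\mathcal{O}_N(V_i))$, where $\mathcal{O}_N(V_i)$ is the line bundle associated to the divisor $V_i$, i.e.\ the line bundle with a section vanishing transversally exactly along $V_i$.

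The key steps, in order, are as follows. First I would recall from the Delzant construction that $N = Z/K$ with $Z = (i^*\circ u)^{-1}(i^*(-\eta)) \subset \C^m$, and observe that the $i^{th}$ coordinate function $z_i$ on $\C^m$ restricts to a $K$-equivariant section of the trivial bundle $Z\times\C_i$ (with $K$ acting on $\C_i$ with weight $-(0,\dots,1,\dots,0)$), hence descends to a section $\bar z_i$ of the line bundle $(Z\times\C_i)/K \to N$. Second, I would check that the zero locus of $\bar z_i$ is exactly $V_i$: a point $[z]\in N$ lies in $V_i$ iff $z_i = 0$, because the moment map $\nu$ sends $[z]$ to a point of $\tilde F_i$ precisely when the corresponding inequality $\langle x,\tilde v_i\rangle\ge\eta_i$ is an equality, which by \eqref{eq:moment map} happens iff $|z_i|^2 = 0$. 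Third, I would verify transversality of this vanishing — this is where the Delzant (smoothness) hypothesis enters, guaranteeing $V_i$ is a smooth submanifold and $\bar z_i$ cuts it out cleanly, so that $\mathcal{O}_N(V_i) \cong (Z\times\C_i)/K$ as complex line bundles. Fourth, by Proposition~\ref{prop:chern class of torus bundle} the first Chern class of $(Z\times\C_i)/K$ is $c_i = x_i$, but one must be careful about the sign: because $K$ acts on $\C_i$ with weight $-(0,\dots,1,\dots,0)$, the line bundle $(Z\times\C_i)/K$ with its \emph{holomorphic} structure is the one whose section $\bar z_i$ vanishes along $V_i$, and this has Chern class $c_i$; conversely the convention tying $x_i$ to the weight $-(0,\dots,1,\dots,0)$ bundle (as in the introduction's description of the $x_i$ as Chern classes of $ET^m\times_{T^m}\C$ with weight $-e_i$) forces $D_i = c_1(\mathcal{O}_N(V_i)) = c_i = x_i$ up to the sign, and tracking the conventions carefully yields $D_i = -c_i = -x_i$.

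The main obstacle I expect is the sign bookkeeping. Everything hinges on: (a) the weight convention for how $K$ acts on $\C_i$, (b) the identification of $x_i$ with $c_i$ from Proposition~\ref{prop:chern class of torus bundle}, and (c) the fact that the Poincar\'e dual of an effective divisor is the Chern class of the \emph{associated} line bundle $\mathcal{O}(V_i)$ (which has a section vanishing on $V_i$), not its dual. The relevant fact here is that $(Z\times\C_i)/K$ — with $K$ acting on $\C_i$ with weight $-e_i$ as set up before Proposition~\ref{prop:chern class of torus bundle} — is precisely the bundle admitting the section $\bar z_i$ vanishing along $V_i$; but the standard toric-geometry relation between the divisor line bundle and the equivariant weight introduces exactly one sign flip, giving $D_i = -x_i$. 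A clean way to pin this down is to test it on a simple example, say $\C P^{k}$ (where $P$ is a simplex, all $\tilde v_i$ are standard, $V_i$ is a hyperplane $\C P^{k-1}$), and check that $D_i$ there equals $-x_i$ with the conventions of Theorem~\ref{thm:symp orbi cohomology}; then invoke naturality of Poincar\'e duals and Chern classes to conclude in general. Alternatively, one can avoid the section argument and instead use that the Euler class of the normal bundle $\nu_i$ of $V_i$ restricts on $V_i$ to the relevant weight, combined with the self-intersection formula $D_i|_{V_i} = c_1(\nu_i)$; either route reduces the proposition to a single carefully-checked sign.
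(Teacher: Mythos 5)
Your overall strategy is the same as the paper's: realize $D_i$ as the first Chern class of a line bundle over $N=Z/K$ by exhibiting a section that vanishes transversally exactly along $V_i=\nu^{-1}(\tilde F_i)$, and then compare that bundle with the one computing $c_i=x_i$. The zero-locus identification ($z_i=0$ iff $\nu([z])\in\tilde F_i$, from $\pi^*\nu([z])=u(z)+\eta$) and the transversality remark are fine. But there is a genuine gap, and it sits exactly where the entire content of the proposition lies: the sign. You assert that the coordinate function $z_i$ descends to a section of $(Z\times\C_i)/K$ where $K$ acts on $\C_i$ with weight $-e_i$. That is false: for $[z,w]\mapsto$ well-definedness you need $(k\cdot z,\,k\cdot w)\sim(z,w)$ to be compatible with $w=z_i$, i.e.\ the action on the fiber must multiply by $k_i$, which is the weight $+e_i$ action. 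So the divisor line bundle $\mathcal{O}_N(V_i)$ is $(Z\times\C)/K$ with $K$ acting with weight $+e_i$, which is the \emph{dual} of the bundle whose Chern class is $c_i$ (the latter was defined with weight $-e_i$); hence $D_i=c_1\bigl(\mathcal{O}_N(V_i)\bigr)=-c_i=-x_i$. This is precisely the paper's argument: it writes down the section $[z_1,\dots,z_m]\mapsto[(z_1,\dots,z_m),z_1]$ of the weight $+e_1$ bundle, notes that bundle has Chern class $-c_1$, and invokes Bott--Tu Prop.\ 12.8.

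As written, your first-pass computation would give $D_i=+c_i$, and you then resolve the discrepancy only by saying that ``tracking the conventions carefully yields $D_i=-c_i$,'' or by deferring to a $\C P^k$ example plus an unspecified naturality argument. Since the statement $D_i=-c_i$ versus $D_i=+c_i$ is exactly the sign you are being asked to prove, this cannot be left to a convention-tracking promise or an example check (the reduction of the general case to $\C P^k$ by ``naturality'' is itself not set up by anything you wrote). The fix is short: verify the $K$-equivariance of $z\mapsto(z,z_i)$ explicitly, conclude the section lives in the weight $+e_i$ bundle, and identify that bundle's Chern class as $-c_i$ directly from the weight convention used to define $c_i$; your alternative route via the normal bundle and the self-intersection formula would also work, but it too must carry out this same weight computation rather than gesture at it.
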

\begin{proof}
Without loss of generality, assume $i=1$. It is obvious that
$-c_{1}$ is the first Chern class of the complex line bundle
\begin{equation}
(Z\times \C)/K\rightarrow Z/K,
\end{equation}
where $K$, as a subgroup of $T^m$, acts on $\C$ with weight
$(1,0,0,...,0)$. According to Proposition 12.8 in \cite{BT:book}, it
suffices to find a transversal section of this line bundle, such
that the zero locus of the section is exactly
$\nu^{-1}(\tilde{F_{1}})$. Define a section
\begin{eqnarray*}
s: &Z/K & \rightarrow (Z\times \C)/K\\
&   [z_{1},z_{2},...,z_{m}]&\mapsto [(z_{1},z_{2},...,z_{m}),z_{1}],
\end{eqnarray*}
noticing that $Z$ is a subset of $\C^m$. It is easy to see this map
is well-defined, and it is straightforward to show that
\begin{equation}
z_{1}=0 \Leftrightarrow \nu([z_{1},...,z_{m}])\in \tilde{F_{1}}.
\end{equation}
Finally, to see it is transversal to the zero section, simply notice
it is holomorphic and obviously not tangent to the zero section
along the zero locus.
\end{proof}

In \cite{TW:quotients}, as a corollary of a more general theorem,
there is yet another way of computing the cohomology ring of a
symplectic toric manifold. To describe the generators there, we draw
a diagram first:
\begin{equation}
\xymatrix{
  (EK\times ET^k)\times_{T^m}\C^m & EK\times_{K}\C^m \ar[l]_(0.4){g_{1}} & EK\times_{K}Z \ar[l]_{g_{2}} \ar[r]^(0.55){g_{3}}& Z/K
  }.
\end{equation}
In the diagram, $g_{1}$ and $g_{2}$ are just inclusion maps, $g_{3}$
is a fiber bundle with fiber $EK$. The group $T^m$ acts on $EK\times
ET^k$ as explained in Section~\ref{sec:symplectic orbifold}. Now
\begin{equation}H^{*}((EK\times ET^k)\times_{T^m}\C^m) =
\Z[y_{1},...,y_{m}],\end{equation} where $y_{i}$ is the Chern class
of the principal $S^1$-bundle
\begin{equation}
(EK\times ET^k)\times_{T^m}(\C^m\times S^1)\rightarrow (EK\times
ET^k)\times_{T^m}\C^m,
\end{equation}
where $T^m$ acts on $S^1$ with weight $-(0,0,...,0,1,0,...,0)$,
where the only $1$ is on the $i^{th}$ position. In Theorem 7 of
\cite{TW:quotients}, the generators of the cohomology ring of a
symplectic toric manifold are the images of these $y_{i}$'s under
the composed map
\begin{equation}\small
\xymatrix{ H^{*}((EK\times ET^k)\times_{T^m}\C^m)
\ar[r]^(0.6){g_{1}^{*}} & H^{*}(EK\times_{K}\C^m) \ar[r]^{g_{2}^{*}}
& H^{*}(EK\times_{K}Z) \ar[r]^(0.6){(g_{3}^{*})^{-1}} & H^{*}(Z/K)
}.
\end{equation}

It follows easily from the naturality of Chern class that
$g_{2}^{*}g_{1}^{*}(y_{i})=g_{3}^{*}(c_{i})$, whence we may conclude
our final proposition.

\begin{proposition}
The generators for the cohomology ring of a symplectic toric
manifold in Theorem 7 in \cite{TW:quotients} are the same as the
ones used in Theorem~\ref{thm:symp orbi cohomology} in this paper.
\end{proposition}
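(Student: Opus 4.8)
The plan is to unpack the naturality argument sketched above into an explicit chain of $S^1$-bundle identifications, culminating in the single equality $g_2^{*}g_1^{*}(y_i)=g_3^{*}(c_i)$ in $H^{*}(EK\times_K Z)$. Granting this equality, the proposition is immediate: $g_3\colon EK\times_K Z\to Z/K$ is a fiber bundle with contractible fiber $EK$, so $g_3^{*}$ is an isomorphism in cohomology (the same device used repeatedly in Section~\ref{sec:equi contact}, e.g.\ in Lemma~\ref{lemma:AG Property}); hence $(g_3^{*})^{-1}g_2^{*}g_1^{*}(y_i)=c_i$, and $c_i=x_i$ by Proposition~\ref{prop:chern class of torus bundle}, which is precisely the assertion.

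To prove the target equality I would track which $S^1$-bundle carries each Chern class. By definition $y_i=c_1$ of $(EK\times ET^k)\times_{T^m}(\C^m\times S^1)\to(EK\times ET^k)\times_{T^m}\C^m$, where $T^m$ (and so $K\subset T^m$, via $ET^m=EK\times ET^k$) acts on the $S^1$ factor with weight $-(0,\dots,1,\dots,0)$, the $1$ in slot $i$. Since $g_1$ is the inclusion $[x,w]_K\mapsto[(x,y_0),w]_{T^m}$ obtained by fixing a basepoint $y_0\in ET^k$, naturality of $c_1$ gives $g_1^{*}y_i=c_1$ of $EK\times_K(\C^m\times S^1)\to EK\times_K\C^m$, with $K$ acting on $S^1$ through the restriction of that same weight. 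Restricting along the inclusion $g_2\colon EK\times_K Z\hookrightarrow EK\times_K\C^m$ then yields $g_2^{*}g_1^{*}y_i=c_1$ of $EK\times_K(Z\times S^1)\to EK\times_K Z$. On the other side, $c_i=c_1$ of $(Z\times\C_i)/K\to Z/K$, equivalently of the associated $S^1$-bundle $(Z\times S^1)/K\to Z/K$, with $K$ acting on $S^1$ through the very same weight; and its pullback along $g_3$ is identified with $EK\times_K(Z\times S^1)\to EK\times_K Z$ via $[e,z,s]\mapsto\bigl([e,z],[z,s]\bigr)$ — literally the bundle produced a moment ago. Hence $g_2^{*}g_1^{*}y_i=g_3^{*}c_i$.

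The only genuine content beyond bookkeeping is checking that the $K$-action on the $S^1$ factor produced by pulling the $T^m$-equivariant bundle back along $g_1$ and $g_2$ agrees on the nose with the $K$-action used to build the line bundle defining $c_i$, and that the maps identifying $EK\times_K(Z\times S^1)$ with both $g_2^{*}g_1^{*}$ of the large bundle and $g_3^{*}$ of the small one are well defined and $S^1$-equivariant. Because $\times_{T^m}$, $\times_K$, and the quotient by $\widetilde{\Delta}$ are all in play at once, this is where one must be careful; but each verification is routine and entirely parallel to the computation in the proof of Proposition~\ref{prop:chern class of torus bundle}, so I expect no real obstacle.
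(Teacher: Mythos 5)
Your proposal is correct and takes essentially the same route as the paper: the paper's proof consists of exactly the identity $g_{2}^{*}g_{1}^{*}(y_{i})=g_{3}^{*}(c_{i})$ (asserted as following ``easily from the naturality of Chern class''), inverted through $g_{3}^{*}$ since $g_{3}$ has contractible fiber $EK$, combined with Proposition~\ref{prop:chern class of torus bundle} giving $c_{i}=x_{i}$. You have simply made explicit the $S^1$-bundle identifications and weight bookkeeping that the paper leaves to the reader.
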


\bibliographystyle{amsalpha}

\end{document}